\pdfoutput=1
\RequirePackage{ifpdf}
\ifpdf 
\documentclass[pdftex]{sigma}
\else
\documentclass{sigma}
\fi

\newtheorem{thm}{Theorem}[section]
\newtheorem{prop}[thm]{Proposition}
\newtheorem{cor}[thm]{Corollary}
\newtheorem{lem}[thm]{Lemma}
{ \theoremstyle{definition}
\newtheorem{defn}[thm]{Definition}
\newtheorem{rem}[thm]{Remark}
\newtheorem{ex}[thm]{Example}

}

\numberwithin{equation}{section}

\def\bG{{\mathbb G}}

\def\C{{\mathbb C}}
\def\F{{\mathbb F}}
\renewcommand{\H}{{\mathbb H}}
\def\N{{\mathbb N}}
\renewcommand{\P}{{\mathbb P}}
\def\Q{{\mathbb Q}}
\def\R{{\mathbb R}}
\def\Z{{\mathbb Z}}
\def\K{{\mathbb K}}

\def\cA{{\mathcal A}}
\def\cB{{\mathcal B}}
\def\cC{{\mathcal C}}
\def\cD{{\mathcal D}}
\def\cE{{\mathcal E}}
\def\cF{{\mathcal F}}
\def\cG{{\mathcal G}}
\def\cH{{\mathcal H}}
\def\cI{{\mathcal I}}
\def\cJ{{\mathcal J}}

\def\cM{{\mathcal M}}
\def\cN{{\mathcal N}}

\def\cP{{\mathcal P}}

\def\cR{{\mathcal R}}
\def\cS{{\mathcal S}}

\def\cU{{\mathcal U}}

\def\cW{{\mathcal W}}

\def\Aut{{\rm Aut}}

\def\Hom{{\rm Hom}}

\def\Spec{{\rm Spec}}

\def\Tr{{\rm Tr}}

\begin{document}
\allowdisplaybreaks

\newcommand{\arXivNumber}{1907.13545}

\renewcommand{\thefootnote}{}

\renewcommand{\PaperNumber}{038}

\FirstPageHeading

\ShortArticleName{Quantum Statistical Mechanics of the Absolute Galois Group}

\ArticleName{Quantum Statistical Mechanics\\ of the Absolute Galois Group\footnote{This paper is a~contribution to the Special Issue on Integrability, Geometry, Moduli in honor of Motohico Mulase for his 65th birthday. The full collection is available at \href{https://www.emis.de/journals/SIGMA/Mulase.html}{https://www.emis.de/journals/SIGMA/Mulase.html}}}

\Author{Yuri I.~MANIN~$^{\dag^1}$ and Matilde MARCOLLI~$^{\dag^2\dag^3\dag^4}$}

\AuthorNameForHeading{Yu.I.~Manin and M.~Marcolli}

\Address{$^{\dag^1}$~Max Planck Institute for Mathematics, Bonn, Germany}
\EmailDD{\href{mailto:manin@mpim-bonn.mpg.de}{manin@mpim-bonn.mpg.de}}

\Address{$^{\dag^2}$~California Institute of Technology, Pasadena, USA}
\EmailDD{\href{mailto:matilde@caltech.edu}{matilde@caltech.edu}}
\Address{$^{\dag^3}$~University of Toronto, Toronto, Canada}
\Address{$^{\dag^4}$~Perimeter Institute for Theoretical Physics, Waterloo, Canada}

\ArticleDates{Received August 01, 2019, in final form April 15, 2020; Published online May 05, 2020}

\Abstract{We present possible extensions of the quantum statistical mechanical formulation of class field theory to the non-abelian case, based on the action of the absolute Galois group on Grothendieck's dessins d'enfant, the embedding in the Grothendieck--Teichm\"uller group, and the Drinfeld--Ihara involution.}

\Keywords{quantum statistical mechanics; dessins d'enfant; absolute Galois group; Drin\-feld--Ihara involution; quasi-triangular quasi-Hopf algebras}

\Classification{11G32; 11M55; 11R32; 82B10; 82C10; 58B34}

\renewcommand{\thefootnote}{\arabic{footnote}}
\setcounter{footnote}{0}

\section{Introduction}\label{IntroSec}

\subsection{Abelian Galois theory via quantum statistical mechanics}

The Bost--Connes system \cite{BC} provides the original model for
the recasting of explicit class field theory problems in the setting of classical or
quantum statistical mechanics.

Basically, it starts with classical Mellin transforms
of Dirichlet series of various arithmetic origins, and represents them as
calculations of statistical averages of certain observables
as functions of inverse temperature $1/kT$ (classical systems)
or imaginary time $it$ (quantum systems).
Here is a brief description of such systems relevant in the contexts of abelian extensions
of number fields
(see \cite{BC} and \cite[Chapter~3]{CoMa-book}).

One constructs the arithmetic noncommutative algebra $\cA_\Q=\Q[\Q/\Z]\rtimes \N$ given by
a semigroup crossed product. One then passes to the associated $C^*$-algebra of observables with
the time evolution $(\cA,\sigma_t)$. The algebra is $\cA=C^*(\Q/\Z)\rtimes \N = C\big(\hat \Z\big)\rtimes \N$. The time evolution acts trivially on the abelian subalgebra, $\sigma_t|_{C^*(\Q/\Z)}={\rm Id}$, and nontrivially on the semigroup~$\N$: $\sigma_t (\mu_n)=n^{{\rm i}t} \mu_n$, where $\mu_n$ are the isometries in $\cA$ corresponding to $n\in \N$. The group $G=\hat\Z^*$ acts as symmetries of the quantum statistical mechanical system $(\cA,\sigma_t)$. There are covariant representations $\pi\colon \cA \to \cB(\cH)$ on the Hilbert space $\cH=\ell^2(\N)$, with ${\rm e}^{{\rm i}t H} \pi(a) {\rm e}^{-{\rm i}t H} = \pi(\sigma_t(a))$, for $a\in \cA$ and $t\in \R$, with Hamiltonian~$H$, for which the partition function $Z(\beta)=\Tr\big({\rm e}^{-\beta H}\big)=\zeta(\beta)$ is the Riemann zeta function. The extremal KMS equilibrium states $\varphi\in \cE_\infty$ at zero temperature, when evaluated on elements of $\cA_\Q$, take values in the maximal abelian extension $\Q^{ab}$ (generated by roots of unity) and intertwine the symmetries and the Galois action, ${\rm \theta}(\gamma) \varphi(a) = \varphi(\gamma a)$, for $\gamma\in \hat\Z^*={\rm GL}_1\big(\hat\Z\big)$ and $\varphi\in \cE_\infty$, with $\theta\colon \hat\Z^* \to {\rm Gal}\big(\Q^{ab}/\Q\big)$ the class field theory isomorphism.

An interested reader will find much more details and basic references in \cite[Chapter~3, Section~2]{CoMa-book}.

This construction of the Bost--Connes system was then generalized in \cite{CoMaRa, HaPa,LLN,Yalk}, to the case of abelian extensions of arbitrary number fields. Given a number field~$\K$, there is an associated quantum statistical mechanical system ${\rm BC}_\K$ with the properties that its partition function is the Dedekind zeta function of~$\K$, and its symmetry group is the Galois group of the maximal abelian extension ${\rm Gal}\big(\K^{ab}/\K\big)$. Moreover, there is an arithmetic subalgebra $\cA_\K$ with the property that evaluations of zero-temperature KMS states on elements of $\cA_\K$ take values in $\K^{ab}$, and
intertwine the action of ${\rm Gal}(\K^{ab}/\K)$ by symmetries of the quantum statistical
mechanical system with the Galois action on $\K^{ab}$.

Later, it was shown in \cite{CdSLMS, CLMS} that the quantum statistical mechanical system ${\rm BC}_\K$ completely determines the number field~$\K$. Other generalizations of the Bost--Connes system were developed for the abelian varieties related to 2-lattices and the Shimura variety of ${\rm GL}_2$
(see~\cite{CoMa}), and for more general Shimura varieties in~\cite{HaPa} and abelian varieties in~\cite{Yalk2}. See also \cite[Chapter~3]{CoMa-book} for an overview of these arithmetic quantum statistical mechanical models.

\subsection{Non-abelian Galois theory via quantum statistical mechanics}

A natural further generalization of this program is a development of quantum statistical mechanical models for the action of the non-abelian absolute Galois group ${\rm Gal}\big(\bar\Q/\Q\big)$ instead of its abelianization ${\rm Gal}\big(\bar\Q/\Q\big)^{ab}={\rm Gal}\big(\Q^{ab}/\Q\big)$ as in the original Bost--Connes system. This development is the main focus of the present paper. We will
approach the problem from the point of view of Belyi's theorem and Grothendieck's theory of
{\em dessins d'enfant}, see~\cite{Groth} and further articles in the same collection. See also the
recent work~\cite{Boa,Fresse}.

More concretely, whereas ${\rm Gal}\big(\bar\Q/\Q\big)^{ab}$ can be presented in terms of the profinite
completion of the fundamental group of $\P^1\setminus \{0,\infty\}$, according to Grothedieck's prophetic vision, ${\rm Gal}\big(\bar\Q/\Q\big)$ might be similarly presented in terms the profinite completion of the fundamental group (or rather groupoid) of $\P^1\setminus \{0, 1,\infty\}$. A decisive step in this direction was made by G.~Belyi~\cite{Be}. The semigroup $\N$ of the classical Bost--Connes system is replaced here by the semigroup of Belyi-extending maps of~\cite{Wood}.

The relation to the profinite fundamental groups mentioned above can be
seen more explicitly in the following way. The abelianization ${\rm Gal}\big(\bar\Q/\Q\big)^{ab}$ is isomorphic by the class field theory isomorphism to $\hat\Z^*$ and acts by
automorphisms on the profinite completion $\hat \Z$ of the fundamental group
of $\P^1\setminus \{0,\infty\}$. On the other hand, the absolute Galois
group ${\rm Gal}\big(\bar\Q/\Q\big)$ has a faithful action by automorphisms on
the profinite completion $\hat F_2$ of the fundamental group of $\P^1\setminus \{0, 1,\infty\}$, as in of \cite[Proposition~1.6]{Ihara} and Theorem~1 in the Appendix to the same paper. This action is given
on a set of generators $x$, $y$ of $\hat F_2$ by $x\mapsto x^{\chi(\gamma)}$ and $y\mapsto f_\gamma^{-1}(x,y) y^{\chi(\gamma)} f_\gamma(x,y)$, with $\chi\colon {\rm Gal}\big(\bar\Q/\Q\big) \to \hat\Z^*$ the cyclotomic character and $f_\gamma(x,y)$ are elements in the commutator subgroup of $\hat F_2$ \cite[Proposition~1.5]{Ihara}. These two actions are clearly compatible through the cyclotomic character, since the action $\alpha\in \hat\Z^*$ on the profinite fundamental group of $\P^1\setminus \{0,\infty\}$ is just given by $x\mapsto x^\alpha$.

Belyi's theorem, Grothendieck's program, and a construction of the respective versions of Bost--Connes system will be described in the Section~\ref{DessinsSec} of this paper. This section, however, is mainly concerned with the combinatorics and enumeration of dessins d'enfant, rather than Galois action upon the fundamental groupoid of $\P^1\setminus \{0, 1,\infty\}$. However, see Section~\ref{HopfGaloisSec} and the following discussions, making explicit the fact that not all our constructions are purely combinatorial ones that avoid an explicit use of Galois action upon dessins.

\looseness=1 Our main motivation here is the fact that various Bost--Connes systems
 are practically equivalent to the relevant enumerations of dessins d'enfant, and their
 behaviour formally is determined by asymptotic properties of the relevant enumerations and
 their generating functions.

The broad idea behind our generalization of the Bost--Connes system from the
abelianized to the absolute Galois group can be summarized as obtained by replacing
the Hopf algebra~$\Q[\Q/\Z]$ on roots of unity by a suitable Hopf algebra on dessins,
and the action of the power maps separating different orbits of roots of unity by the
action of a semigroup of Belyi-extending maps. However, expecting that the resulting
quantum statistical mechanical system obtained in this way will suffice to
separate the different Galois orbits of dessins is probably too strong an expectation,
at least in its present form. We will show that one can see some of the new Galois
invariants of dessins introduced in \cite{Wood} occurring in the low temperature KMS
states of the system, but some technical restrictions on the choice of the Belyi-extending
maps, which will be specified in Section~\ref{DessinsSec}, limit the effectiveness of
these invariants at separating Galois orbits. It is possible that
using constructions from \cite{Boa,Fresse}, one may be able to obtain
a more refined version of the quantum statistical mechanical system presented here
that bypasses this limitation.

The Hopf algebra formalism of dessins we discuss in Section~\ref{DessinsSec} makes it possible,
in principle, to obtain new Galois invariants of dessins from known ones, by applying
the formalism of Birkhoff factorization, so that the new invariants depend not only
on a dessin and its sub-dessins, but also on associated quotient dessins. We illustrate
this principle in Proposition~\ref{newGalois}
in the case of Tutte and Bollob\'as--Riordan--Tutte polynomials.

Section~\ref{IharaSec} is dedicated to the absolute Galois action and its (partial) descriptions.
Such descriptions are centred around various versions of the {\it Grothendieck--Teichm\"uller group}.
We start with its formal definition in the Section~\ref{IharaBCsec}, where this group
appears together with its action upon the Bost--Connes algebra and upon the relevant
quantum statistical mechanical system. In this environment, the Bost--Connes statistical mechanics
becomes much more sensitive to the arithmetics of the absolute Galois group, through its
relation to the Grothendieck--Teichm\"uller group, rather than through its action on dessins
as in the choices surveyed in Section~\ref{DessinsSec}.

The last Section~\ref{section3.10} presents, following \cite{ComMan2}, one of the avatars ${\bf mGT}$ of the
Grothendieck--Teichm\"uller group as the symmetry group of the genus zero modular operad introduced in~\cite{KoMa} and much studied afterwards.

Section~\ref{section4} discusses the relation between the algebras of our quantum statistical mechanical systems and Drinfeld's quasi-triangular quasi-Hopf algebras~\cite{Dri3, Dr}. In particular, we show that both the Bost--Connes algebra and our Hopf algebra of dessins d'enfant have an associated direct system of quasi-triangular quasi-Hopf algebras obtained using Drinfeld's twisted quantum double construction~\cite{Dri2}.

\section{Dessins and dynamics}\label{DessinsSec}

Below we will use (with certain laxity) the language of graphs as it was described in~\cite{BoMa}.

A finite graph $\tau$ is a quadruple of data $(V_\tau, F_\tau, \delta_\tau, j_\tau)$ of finite sets $V_{\tau}$ of {\em vertices} and $F_{\tau}$ of {\em flags}, a map $\partial_{\tau}\colon F_{\tau} \to V_{\tau}$, and an involution $j_{\tau}\colon F_{\tau}\to F_{\tau}$: $j_{\tau}^2={\rm id}$. For each vertex $v\in V_{\tau}$, the graph with flags $\partial_{\tau}^{-1}(v)$ and trivial involution~$j$ is called {\em the corolla} of~$v$ in~$\tau$. If a graph has just one vertex, it is called a corolla itself.

A {\em geometric realisation} of graph $\tau$ is a topological space of the following structure.
If $\tau$ is a non-empty corolla with vertex~$v$, then its geometric realisation is the disjoint union
of segments whose components bijectively correspond to elements of $F_{\tau}$
modulo identification of all points $0$, which becomes the geometric
realisation of $v$. Generally, consider the disjoint union of geometric realisations of corollas
of all vertices of $\tau$, and then identify the endpoints of two different flags,
if these two flags are connected by the involution~$j_{\tau}$. Sometimes one considers the endpoint removed in the case of geometric realisations of all flags that are stationary points of the involution.

The pairs of flags, connected by involution, resp.\ their geometric realisations, are called {\em edges} of $\tau$, resp.\ their geometric realisations.

In order not to mix free flags with halves of edges, one may call a free flag {\em a leaf}, or {\em a tail} as in~\cite{KoMa}.

The description of graphs in terms of flags with an involution is common especially in contexts originating from physics where corollas represent possible interactions and one expects graph to have both internal edges, namely pairs of different flags glued by the involution, and external edges (leaves or tails), namely flags that are fixed points of the involution.

Among various types of labelings or markings of the sets $V_\tau$, $F_\tau$ forming a graph,
an important role is played by {\it orientations of flags.} If two flags form an edge, their orientations
must agree. We often use oriented trees, in which one free flag is chosen as a {\it root},
and all other leaves are oriented in such a way, that from each leaf there exists a unique
oriented path to the root.

Below we will often pass in inverse direction: from the geometric realization of a graph
to its set-theoretic description. It should not present any difficulties for the reader.

\subsection{Belyi maps and dessins d'enfant}\label{section2.1}

Let $\Sigma$ be a smooth compact Riemann surface. We recall the following definition
(see \cite{Be,Groth}).

\begin{defn}\label{Be-dessins} \quad
\begin{enumerate}\itemsep=0pt
\item[$(i)$] A Belyi map $f\colon \Sigma \rightarrow \P^1(\C)$ is a holomorphic map that is unramified outside of the set $\{0, 1, \infty\}$.

\item[$(ii)$] Dessin d'enfant $D=D_f$ of $f$ is the preimage $f^{-1}([0,1])$ embedded in $\Sigma$ and
considered as a marked graph in the following way.
\end{enumerate}

Its set of vertices $V(D)$ is the union $f^{-1}(0)\cup f^{-1}(1)$; each point in the preimage $V_1(D)=f^{-1}(1)$, resp.~$V_0(D)=f^{-1}(0)$, is marked white, resp.~black. Such graphs are called bipartite ones.

Its set of (open) edges $E(D)$ consists of connected components of $f^{-1}(0,1)$ of the preimage of the unit interval $\cI$.
\end{defn}

Each $\Sigma$ is the Riemann surface of a smooth complex algebraic curve.
The role of Belyi maps in this context is determined by Belyi's converse theorem~\cite{Be}:
each smooth complex algebraic curve defined over subfield of algebraic numbers admits
a~Belyi map.

Grothendieck's intuition behind introduction of dessins of such maps was the hope,
that action of the absolute Galois group upon the category of such algebraic curves
would admit an explicit description after the transfer of this action to dessins.

More precisely, in terms of branched coverings and Belyi maps $f\colon \Sigma \to \P^1$, we can view the Galois action of $G={\rm Gal}\big(\bar\Q/\Q\big)$ in the following way. In the connected case, consider the finite extension $\bar\Q(\Sigma)$ of the function field~$\bar\Q(t)$. The field $\bar\Q(\Sigma)$ is given by $\bar\Q(t)[z]/(P)$ where $P\in \bar\Q(t)[z]$ is an irreducible polynomial. An element $\gamma \in G$ maps $P\mapsto P_\gamma$ where~$P_\gamma$ is the polynomial obtained by action of~$\gamma$ on the coefficients of~$P$ (extending the action of~$\gamma$ to~$\Q(t)$ by the identity on~$t$). Thus the action of $G$ maps the extension $\bar\Q(t)[z]/(P)$ to $\bar\Q(\Sigma_\gamma):=\bar\Q(t)[z]/(P_\gamma)$. The action extends to the case where $\Sigma$ is not necessarily connected, by decomposing the \'etale algebra of the covering into a direct sum of field extensions as above. Correspondingly, the element $\gamma \in G$ maps the dessin~$D$ determined by the branched covering $f\colon \Sigma \to \P^1$, or equivalently by the extension $\bar\Q(\Sigma)$ to the dessin~$\gamma D$ determined by $\bar\Q(\Sigma_\gamma)$.

Therefore we must first trace how the combinatorics of dessins encodes the geometry of their Belyi maps.

\subsection{Geometry of Belyi maps vs combinatorics of dessins}
\begin{enumerate}\itemsep=0pt
\item[(a)] The number of edges $d=\# E(D)$ equals the degree of $f\colon \Sigma \to \P^1(\C)$
\item[(b)] The numbers of black and white
vertices $m=\# V_0(D)$ and $n=\# V_1(D)$ correspond to the orders of ramification of $f$ at $0$ and $1$.
\item[(c)] The numbers $\mu_1,\ldots, \mu_m$ and $\nu_1,\ldots,\nu_n$ of edges in the corollas around the
black, resp.\ white vertices, give the ramification profiles of $f$ at $0$ and $1$. According to the
degree sum formula for bipartite graphs, $\sum_i \mu_i = d =\sum_j\nu_j$.
\item[(d)] In order to keep track of additional data of topology of embedding $D=D_f$ into $\Sigma$ we must endow $D$
with cyclic ordering of flags at each corolla coming from the canonical orientation of $\Sigma$.
In other words, $D$ must be considered as a bipartite {\em ribbon} graphs.
\item[(e)] The graph $D$ is connected iff the covering is connected, or equivalently iff the \'etale algebra of the covering is a field. The
degree $d$ is the dimension of the \'etale algebra of the covering as a vector space over $\bar\Q(t)$,
in the connected case the degree of the field extension. The group of symmetries of a connected
dessin is the automorphism group of the extension, which is a finite group of order at most equal to
the degree $d$.
\item[(f)] Finally, for each bipartite ribbon graph $D$ one can construct a Belyi map
$f$ such that $D=D_f$.
\end{enumerate}

In order to see that, one can first construct a map of topological surfaces $\Sigma^{\prime}\to \P^1(\C )$
with desired properties, by working locally in a suitable covering of $\Sigma^{\prime}$,
and then endow $\Sigma^{\prime}$ with the complex structure lifted from~$\P^1(\C )$.

\subsection{Regular and clean dessins} \label{section2.3}
For studying action of the Galois group, two subclasses of dessins d'enfant are especially inte\-res\-ting.
\begin{enumerate}\itemsep=0pt
\item[(a)] {\em Regular dessins.} A dessin is called regular one, if it is connected and if its symmetry
group is as large as possible: its cardinality equals to the degree $d$.
\end{enumerate}

Regular dessins correspond to Galois field extensions. Geometrically, they correspond to regular branched
coverings where the deck group acts transitively on any fiber. Every connected dessin admits a
regular closure, which corresponds to the smallest extension that is Galois.
\begin{enumerate}\itemsep=0pt
\item[(b)] {\em Clean dessins}. A dessin is called clean, if all its white vertices have valence two.
\end{enumerate}

The clean dessins can be re-encoded to ribbon graphs that are not bipartite, by colouring all its vertices
black and then inserting valence two white vertices in the middle of each edge. This was the class
of dessins originally considered by Grothendieck. In terms of branched coverings of~$\P^1(\C)$ and
Belyi maps, the clean dessins correspond to coverings whose ramification profile over the point~$1$
is only of type $(2, 2, \ldots, 2, 1, \ldots, 1)$, that is, they have either simple ramification or no ramification.

Any dessin, not necessarily connected, whose all connected components are clean,
will be called {\em locally clean}.

\subsection{Hopf algebras of dessins}
\begin{enumerate}\itemsep=0pt
\item[(a)] {\em Locally clean dessins.} We consider first the class $\cC\cD$ of locally clean dessins re-encoded as in Section~\ref{section2.3}(b). Isomorphism classes of such graphs can be used in order to construct a Hopf algebra, which will serve as a generalization to ribbon graphs~\cite{Malyshev} of the Connes--Kreimer Hopf algebra of renormalization in perturbative quantum field theory~\cite{CK}.
\end{enumerate}

We start by constructing a Hopf algebra $\cH_{\cC\cD}$ of locally clean dessins.
As an algebra $\cH_{\cC\cD}$ is the free commutative $\Q$-algebra freely
generated by isomorphism classes of clean dessins. Locally clean dessins
are identified with the monomials given by the products of their connected
components.
More generally, if $D$ is a clean design considered as ribbon graph on the black vertices
and $\delta \subset D$ is its proper subgraph, we can construct the quotient $D/\delta$.

Combining these two constructions, we can construct the map
\begin{gather*} \Delta \colon \ \cH_{\cC\cD} \to \cH_{\cC\cD}\otimes \cH_{\cC\cD}, \qquad \Delta (D) :=
D \otimes 1 + 1 \otimes D +\sum_{\delta \subset D} \delta \otimes D/\delta.
\end{gather*}

An equivalent description of the coproduct is given in~\cite{Malyshev} in terms of surfaces.
Let $\Sigma$ denote the Riemann surface determined by the dessin~$D$. Consider open not
necessarily connected subsurfaces (Riemann surfaces with boundary) $\sigma \subset \Sigma$.
To each such subsurface $\sigma$ one can associate a quotient $\Sigma/\sigma$ and a closure $\bar \sigma$.
The quotient is the closed surface obtained by replacing $\sigma$ in $\Sigma$ with a sphere with
the same number of holes glued back into $\Sigma$ in place of $\sigma$. The closure $\bar \sigma$ is
obtained by adding boundary edges so that every sphere with holes in $\bar \sigma$ has the same
number of boundary edges as it would have in $\Sigma$. The coproduct is then equivalently
written in terms of surfaces as \cite{Malyshev}
\begin{gather*} \Delta(\Sigma) = \Sigma \otimes 1 + 1 \otimes \Sigma +\sum_{\sigma \subset \Sigma} \bar \sigma
\otimes \Sigma/\sigma. \end{gather*}
This coproduct is shown in \cite{Malyshev} to be coassociative.

The number of edges (which is equal to the degree of the Belyi map) is
additive, $\# E(D)=\# E(\delta) + \# E(D/\delta)$. One can consider the Hopf algebra as graded by
this degree. The antipode on a connected graded Hopf algebra is defined inductively by the formula
$S(X)=-X +\sum S(X') X''$ for $\Delta(X)=X\otimes 1+1\otimes X +\sum X'\otimes X''$ with terms
$X'$ and $X''$ of lower degree.

\begin{enumerate}\itemsep=0pt
\item[(b)] {\em General dessins.} This construction can be extended from clean dessins to all dessins in the following way.
\end{enumerate}

\begin{defn}\label{subD} Let $D$ be a connected dessin. A $($possibly non-connected$)$ sub-dessin $\delta$
consists of a $($possibly non-connected$)$ subgraph of $D$ which is bipartite.

We consider $\delta$ as endowed with internal and external edges: the internal edges
are the edges between vertices of the subgraph and the external edges $($flags$)$ are half-edges
for each edge in~$D$ with one end on the subgraph~$\delta$ and the other on some
vertex in $D\smallsetminus \delta$. We endow~$\delta$ with a~ribbon structure where
the corolla of each vertex $($including both internal and external edges$)$ has a~cyclic
orientation induced from the ribbon structure of~$D$.
\end{defn}

In the following, with a slight abuse of notation, we will use the notation $\delta \subset D$
for the sub-dessin with or without the external edges included, depending on context. In
each case the graph $\delta$ is bipartite and with a ribbon structure induced by $D$.
We describe how to obtain a quotient dessin in this more general case, and we then
discuss the corresponding possible choices of grading of the resulting Hopf algebra.

\begin{lem}\label{subquotD}Let $D$ be a connected dessin and $\delta \subset D$ a sub-dessin as in Definition~{\rm \ref{subD}}. There is a quotient bipartite graph $D/\delta$, obtained by shrinking each component of $\delta$ to a~bipartite graph with one white and one black vertex and a single edge, for which any choice of a cyclic ordering of the boundary components of a tubular neighborhood of $\delta$ in $\Sigma$
determines a ribbon structure, making~$D/\delta$ a dessin.
\end{lem}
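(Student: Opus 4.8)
The plan is to produce $D/\delta$ in three stages: first as a bipartite graph, then equip it with a ribbon structure using the embedding into $\Sigma$, and finally conclude it is a dessin by invoking the reconstruction of a Belyi map from an arbitrary bipartite ribbon graph (item~(f) above). The combinatorial skeleton and bipartiteness are straightforward; the content of the lemma is concentrated in the passage from the chosen cyclic ordering of boundary components to a genuine ribbon structure at the contracted vertices.

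First I would fix the skeleton. Write $\delta = \delta_1 \sqcup \cdots \sqcup \delta_k$ for the connected components of the sub-dessin of Definition~\ref{subD}. The vertex set of $D/\delta$ is $(V(D)\setminus V(\delta)) \cup \{b_1,w_1,\dots,b_k,w_k\}$, where the pair $(b_i,w_i)$ is the single black/white vertex replacing $\delta_i$, joined by one new edge $e_i$. For the flags I keep every edge of $D$ lying entirely in $D\setminus\delta$ unchanged, discard the internal edges of each $\delta_i$, and for each external flag of $\delta_i$ reattach its $\delta_i$-endpoint to $b_i$ if that endpoint was black and to $w_i$ if it was white, leaving the outside endpoint and the involution untouched. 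Bipartiteness is then immediate: an external flag reattached to $b_i$ had its outside end on a white vertex of $D\setminus\delta$ since $D$ is bipartite, so it still joins a black vertex to a white one, and the new edges $e_i$ join $b_i$ to $w_i$ by construction. Connectedness of $D$ is inherited because only connected pieces are contracted, although only the weaker conclusion that $D/\delta$ is a dessin is needed.

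The substantive step is the ribbon structure. At every vertex of $D/\delta$ coming from $D\setminus\delta$ the corolla is literally unchanged, so its cyclic ordering is inherited from $D$; the only new data are the cyclic orderings of flags at $b_i$ and $w_i$. Here I would use the embedding: choose a tubular neighborhood $N_i$ of $\delta_i$ in $\Sigma$, a compact subsurface whose boundary is a disjoint union of circles $C_1^{(i)},\dots,C_{r_i}^{(i)}$. Each external flag of $\delta_i$ exits $N_i$ through exactly one of these circles, and the orientation of $\Sigma$ induces a cyclic ordering of the external flags meeting any single circle. A choice of cyclic ordering of the boundary components $C_1^{(i)},\dots,C_{r_i}^{(i)}$ then splices these per-circle orderings into one cyclic ordering of all external flags of $\delta_i$; distributing the black ones to $b_i$ and the white ones to $w_i$ and inserting the two half-edges of $e_i$ produces the corollas of $b_i$ and $w_i$, hence a ribbon structure on $D/\delta$. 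This mirrors, for general dessins, Malyshev's surface description recalled above, where a sub-object is replaced through its boundary data.

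The hard part will be the well-definedness and naturality of this last step. One must check that splicing the per-circle orders along the chosen cyclic order of boundary components yields an honest cyclic ordering of the external flags (and not merely a separate ordering of each colour class); that the positions at which the half-edges of $e_i$ are inserted are pinned down by the \emph{junction} where the two halves of the collapsed $\delta_i$ meet; and that the whole assignment depends on $N_i$ only through the cyclic ordering of its boundary components, so that different isotopic tubular neighborhoods give the same data and the cyclic ordering named in the statement is indeed the only genuine choice. These are exactly the points that must be argued with the embedded graph rather than its abstract combinatorics. Granting them, $D/\delta$ is a bipartite ribbon graph, and by the reconstruction of a Belyi map from any bipartite ribbon graph (item~(f) above) it is a dessin, which is the assertion.
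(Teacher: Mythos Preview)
Your proposal is correct and follows essentially the same approach as the paper: label external flags by colour, collapse each component, use the orientation of $\Sigma$ on each boundary circle of a tubular neighborhood together with a chosen cyclic order of those circles to produce a single cyclic order on all external flags, then separate into black and white corollas joined by one new edge. The only cosmetic difference is that the paper passes through an intermediate object $D//\delta$ (each component collapsed to a \emph{single} vertex) and then splits the resulting shuffle into the two coloured cyclic orders, whereas you build the two-vertex replacement directly; the content is identical, and in fact your version is slightly more explicit about the subtleties (placement of the half-edges of $e_i$, independence of the choice of $N_i$) than the paper's own proof, which simply does not address them.
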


\begin{proof}We need to check that the graph $D/\delta$ is indeed still a dessin, namely that
it is bipartite and has a ribbon structure, and that these are compatible with those of
the initial dessin $D$ and of the subdessin~$\delta$.

The dessin structure on the
quotient $D/\delta$ is obtained by the following procedure.
First label all the external half edges of $\delta$ with black/white labels according to whether
they are attached to a black or white vertex of $\delta$.

Forgetting temporarily the bipartite structure, consider then the quotient graph, which we denote by $D//\delta$,
obtained by shrinking each connected component of the subgraph $\delta$ to a single vertex,
but with the edges of $D//\delta$ out of these quotient vertices retaining the black/white labels coming
from the bipartite structure.

Consider a small tubular neighborhood of the subgraph $\delta$ on the surface $\Sigma$
where $D$ is embedded and consider its boundary components: endow the corollas of the
quotient vertices in $D//\delta$ with a cyclic structure obtained by listing the half edges in the order
in which they are met while circling around each of the boundary components (in an assigned
cyclic order) in the direction induced by the orientation of $\Sigma$.

Because of the black/white labels of the external edges of $\delta$, the cyclic ordering of half-edges
around each quotient vertex in $D//\delta$ can be seen as a shuffle of two cyclically ordered sets
of black and white labelled half-edges: separate out each quotient vertex into a black and a white vertex
connected by one edge, with the cyclically ordered black half-edges attached to the black vertex
and the cyclically ordered white half-edges attached to the white vertex.
The resulting graph $D/\delta$ obtained in this way is bipartite and has a ribbon structure,
hence it defines a quotient dessin. The ribbon structure defined in this way depends on the
assignment of a cyclic ordering to the boundary components of a tubular neighborhood
of $\delta$ in $\Sigma$.
\end{proof}

We can again reformulate the construction of the quotient dessins $D/\delta$ in terms of
the surfaces~$\sigma$ and~$\Sigma/\sigma$ as above~\cite{Malyshev}.
Although the formulation we gave above in terms of graphs is more explicit, an advantage of
the reformulation in terms of surfaces we discuss below is that it explains
the definition of the ribbon structure on~$D/\delta$ used in Lemma~\ref{subquotD}
in a more natural way as follows.

\begin{cor}\label{quotDS}The ribbon structure on $D/\delta$ can be equivalently described by
considering $D/\delta$ embedded in the surface $\Sigma/\sigma$, for $\sigma$
the sub-surface with boundary $\sigma \subset \Sigma$ containing $\delta$,
with the external edges of $\delta$ cutting through the components
of $\partial \sigma$. Each cyclic orientation of the components of $\partial \sigma$
determines a ribbon structure on $D/\delta$.
\end{cor}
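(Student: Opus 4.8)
The plan is to reconcile the two descriptions of the ribbon structure on $D/\delta$: the combinatorial one produced in Lemma~\ref{subquotD} by circling around the boundary components of a tubular neighborhood of $\delta$ in $\Sigma$, and the surface-theoretic one coming from the Malyshev quotient $\Sigma/\sigma$ of \cite{Malyshev}. The starting observation is that one is free to choose $\sigma$ to be a regular (tubular) neighborhood of $\delta$ inside $\Sigma$, so that $\sigma$ deformation-retracts onto $\delta$ and the connected components of $\sigma$ biject with those of $\delta$. With this choice the boundary curves $\partial\sigma$ are exactly the boundary circles of the tubular neighborhood used in the proof of Lemma~\ref{subquotD}, and each external edge of $\delta$ meets $\partial\sigma$ transversally in a single point; this already justifies the phrase that the external edges ``cut through'' the components of $\partial\sigma$.

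First I would make the embedding of $D/\delta$ into $\Sigma/\sigma$ explicit. By definition $\Sigma/\sigma$ is obtained by excising $\sigma$ and gluing in, along the boundary of each component of $\sigma$, a sphere carrying the same number of holes as that component had boundary circles. The part of $D$ lying outside $\sigma$ is left untouched, while the external half-edges of $\delta$, which crossed $\partial\sigma$, now terminate on the inserted spheres. On each such sphere I place the single black--white edge representing the collapsed component of $\delta$ and reattach the external half-edges to its two vertices according to their black/white labels. This yields a bipartite ribbon graph embedded in the closed oriented surface $\Sigma/\sigma$, which is $D/\delta$ as an abstract graph.

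Next I would compare the two ribbon structures at each quotient vertex. The orientation of $\Sigma$ descends to $\Sigma/\sigma$ away from the excised region and, once a cyclic ordering of the boundary circles of each component of $\partial\sigma$ is fixed, to the inserted spheres as well. Reading off the cyclic order of half-edges around a quotient vertex from its spherical embedding amounts to listing the external edges in the order in which they meet the boundary circles, traversed in the orientation induced by $\Sigma$ and in the chosen cyclic order of the circles. This is word for word the recipe of Lemma~\ref{subquotD}, so the two cyclic orderings at every corolla coincide; since the black/white splitting of each quotient vertex is governed by the same edge labels in both pictures, the splittings agree as well, and the dependence on the cyclic ordering of $\partial\sigma$ matches.

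The main obstacle I anticipate is verifying that replacing a component $\sigma_i$ by a sphere with the same number of holes leaves the boundary cyclic data, hence the induced ribbon structure, genuinely unchanged, independently of the internal topology of $\sigma_i$. Concretely, one must check that for the ``star'' configuration --- a single edge on a sphere whose two vertices collect the half-edges entering through the various holes --- every orientation-preserving embedding realises the same cyclic order around each vertex as the circling prescription. This reduces to the elementary fact that the ribbon structure of such a star graph on the sphere is determined by the cyclic arrangement of its holes together with the within-hole ordering of the incoming half-edges. Once this point is settled, the identification of the surface-theoretic ribbon structure with the one of Lemma~\ref{subquotD} follows.
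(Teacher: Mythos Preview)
Your proposal is correct and follows essentially the same approach as the paper's proof: both identify $\sigma$ with (or as homotopy-equivalent to) a tubular neighborhood of $\delta$, describe $\Sigma/\sigma$ by replacing each component with a punctured sphere carrying the single black--white edge, and observe that reading the cyclic order of external half-edges around the quotient vertices reproduces the circling prescription of Lemma~\ref{subquotD}. Your write-up is in fact more careful than the paper's on the ``star on a sphere'' point; the paper simply asserts that the two procedures coincide, while you isolate and justify the step where the cyclic arrangement of the holes determines the ribbon structure of the replacement graph.
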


\begin{proof} An open subsurface $\sigma$ of $\Sigma$ containing $\delta$,
with the property that the external edges of $\delta$ pass through
the boundary curves $\partial \sigma$ is equivalent, up to homotopy, to
a tubular neighborhood of $\delta$ in $\Sigma$ with its boundary curves.
We can assume for simplicity that $\delta$, hence $\sigma$, are connected.
Thus, a choice of a cyclic ordering of the boundary curves of such a
tubular neighborhood is equivalent to the choice of a cyclic ordering
of the components of $\partial \sigma$. The quotient bipartite graph $D/\delta$
is contained in the surface $\Sigma/\sigma$ obtained by gluing a sphere
with the same number of punctures to the boundary
$\partial (\Sigma\smallsetminus \sigma)=\partial \sigma$, with the bipartite graph
with one black and one white vertex and the same external edges
of $\delta$ contained in this sphere, with the same external edges
cutting through the same boundary components as in the case
of $\delta$ in $\sigma$. If $\delta$ has multiple connected components,
and $\sigma$ has correspondingly multiple components, then the same
can be argued componentwise, when each component of $\sigma$ is
replaced by a sphere with the same boundary in $\Sigma/\sigma$
with each of these spheres containing a
 bipartite graph with one black and one white vertex and the
 number of external edges of the corresponding component
 of $\delta$. We see in this way that the procedure for the
construction of the ribbon structure on $D/\gamma$
described in Lemma~\ref{subquotD} is the same as the one
described here.
\end{proof}


As the result, we obtain another commutative connected Hopf algebra $\cH_{\cD}$ with involution,
with a grading that is again expressible in terms of the number of edges (the degree of the
respective Belyi map), as follows.

\begin{lem}\label{HDgrading}
Any of the functions defined on connected dessins as follows defines a good grading
of the Hopf algebra $\cH_{\cD}$,
\begin{gather}\label{Dgrading}
\deg(D) = \begin{cases} b_1(D) & \text{or} \\
\# E(D) -1 & \text{or} \\
\# V(D) -2.
\end{cases}
\end{gather}
\end{lem}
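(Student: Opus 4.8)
The plan is to verify that each of the three functions in \eqref{Dgrading}, once extended from connected dessins to arbitrary monomials by additivity over connected components, turns $\cH_\cD$ into a graded Hopf algebra. Compatibility with the product is then automatic: by construction the degree of a product of connected dessins is the sum of the degrees of the factors. Hence the only substantial point is compatibility with the coproduct, namely that for every connected dessin $D$ and every sub-dessin $\delta \subset D$ one has $\deg(\delta) + \deg(D/\delta) = \deg(D)$, so that each term $\delta \otimes D/\delta$ of $\Delta(D)$ lands in the correct bidegree (the two boundary terms $D \otimes 1$ and $1 \otimes D$ being harmless, since $\deg(1)=0$).

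The key computation, which I would isolate first, records how the construction of $D/\delta$ in Lemma~\ref{subquotD} changes the vertex and edge counts. Write $k$ for the number of connected components of $\delta$, $V_\delta$ for its number of vertices, and $I_\delta$ for its number of internal edges. Shrinking each component to a bipartite graph with one black and one white vertex and a single edge replaces the $V_\delta$ vertices of $\delta$ by $2k$ new vertices and deletes the $I_\delta$ internal edges while creating $k$ new ones, whereas the boundary edges (the external half-edges of $\delta$) and all edges lying outside $\delta$ survive. This gives
\begin{gather*}
\#V(D/\delta) = \#V(D) - V_\delta + 2k, \qquad \#E(D/\delta) = \#E(D) - I_\delta + k,
\end{gather*}
and, since shrinking a subgraph of a connected graph preserves connectedness, $D/\delta$ is again connected.

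With these two identities the three claims follow by direct substitution. For $\deg = \#V - 2$, additivity over the components of $\delta$ gives $\deg(\delta) = V_\delta - 2k$, and adding $\deg(D/\delta) = \#V(D) - V_\delta + 2k - 2$ yields $\#V(D) - 2$. For $\deg = \#E - 1$ one has $\deg(\delta) = I_\delta - k$, and adding $\#E(D) - I_\delta + k - 1$ yields $\#E(D) - 1$. For $\deg = b_1$, using $b_1 = \#E - \#V + (\text{number of components})$ together with the connectedness of $D$ and of $D/\delta$ gives $b_1(\delta) = I_\delta - V_\delta + k$ and $b_1(D/\delta) = b_1(D) - b_1(\delta)$, so once more the two degrees sum to $\deg(D)$. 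Each of the three functions is non-negative on connected dessins, so in every case the grading takes values in $\Z_{\geq 0}$.

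I expect the one place that genuinely requires care --- and where the three constants $0$, $-1$, $-2$ come from --- is the bookkeeping for a possibly disconnected $\delta$: because $\delta$ enters the coproduct as the product $\delta_1 \cdots \delta_k$ of its components, the shift in the degree of a single connected dessin ($0$, $-1$, or $-2$) is multiplied by $k$, and this must match exactly the $+2k$ and $+k$ corrections produced by replacing each component by a two-vertex, one-edge graph. Keeping straight which edges are internal to $\delta$, which are the shared boundary edges, and how the external half-edges are redistributed between the new black and white vertices is the main source of potential error; once the two count identities above are pinned down, the rest is immediate.
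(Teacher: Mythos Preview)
Your proof is correct and follows essentially the same approach as the paper: both extend the degree additively over connected components and reduce the compatibility with the coproduct to the two counting identities $\#V(D/\delta)=\#V(D)-\#V(\delta)+2b_0(\delta)$ and $\#E(D/\delta)=\#E(D)-\#E(\delta)+b_0(\delta)$, from which all three cases follow by direct substitution (the paper writes out only the $b_1$ case explicitly). Your discussion of why the shifts $0,-1,-2$ must match the $+2k,+k$ corrections is a nice clarification that the paper leaves implicit.
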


\begin{proof} The degree defined with any of the options in~\eqref{Dgrading} for connected dessins
extends additively to monomials in $\cH_D$ (multi-connected dessins) so that one
obtains, respectively, $b_1(D)$, $\# E(D)-b_0(D)$, and
$\# V(D)-2 b_0(D)$.
With the construction of the quotient dessin $D/\delta$ of a~subdessin $\delta \subset D$
as in Lemma~\ref{subquotD}, we have $\# V(D) = \#V(\delta) +\# V(D/\delta)- 2 b_0(\delta)$
and $\# E(D) =\# E(\delta)+ \# E(D/\delta) - b_0(\delta)$, hence $b_1(D)=1-\# V(D) +\# E(D)= 1-
\#V(\delta) - \# V(D/\delta)+ 2 b_0(\delta) +\# E(\delta)+ \# E(D/\delta) - b_0(\delta)=
b_0(\delta)+ \# E(\delta) -\#V(\delta) +1 +\# E(D/\delta)-\# V(D/\delta)=b_1(\delta)+b_1(D/\delta)$.
Thus, in all three cases of~\eqref{Dgrading} we obtain
$\deg(D)=\deg(\delta)+\deg(D/\delta)$.
\end{proof}

The gradings of Lemma~\ref{HDgrading} are the analogs for dessins of the
choices of gradings for the original Connes--Kreimer Hopf algebra of graphs
by either loops $b_1(\Gamma)$, or internal edges $\#E(\Gamma)$ or by
$\# V(\Gamma)-1$ (for connected graphs, extending to $\# V(\Gamma)-b_0(\Gamma)$
for monomials). Here all the choices of grading in Lemma~\ref{HDgrading} have a direct
interpretation in terms of Belyi maps, since~$\# E(D)$ is the degree of the Belyi
maps and~$\# V(D)$ is the sum of the orders of ramification at~$0$ and~$1$.

The coassociativity of the coproduct~\eqref{HDcoprod} is the only property that needs to
be verified to ensure that indeed we obtain a Hopf algebra, as all the other properties are
clearly verified. The argument for the coassociativity is similar to the case of the Conner--Kreimer
Hopf algebra, with some modifications due to the different definition of the quotient graph
that we used in Lemma~\ref{subquotD}. We present the explicit argument for the reader's
convenience.

\begin{prop}\label{HopfDessins}As an algebra $\cH_\cD$ is the commutative polynomial $\Q$-algebra in the connected dessins.
The coproduct on $\cH_\cD$ is given by
\begin{gather}\label{HDcoprod}
\Delta(D)=D\otimes 1 + 1\otimes D + \sum_{\delta \subset D} \delta \otimes D/\delta,
\end{gather}
where now the sum is over the sub-dessins defined as in Definition~{\rm \ref{subD}} and also
over all the possible ribbon structures on the corresponding quotients, defined as in
Lemma~{\rm \ref{subquotD}}. Then $\cH_\cD$ is a graded connected Hopf algebra, with the grading by $\#E(D)-b_0(D)$, where $\#E(D)$ is the number of edges $($the degree of the Belyi map$)$.
\end{prop}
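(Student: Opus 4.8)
The plan is to check that \eqref{HDcoprod} defines a coassociative coproduct; this is the only Hopf-algebra axiom that is not immediate. Indeed, as an algebra $\cH_\cD$ is by fiat the free commutative polynomial $\Q$-algebra on connected dessins, hence commutative and connected graded, with the grading by $\#E(D)-b_0(D)$ already shown to be compatible with the quotient construction in Lemma~\ref{HDgrading}. The counit is the projection onto the degree-zero part, multiplicativity of $\Delta$ holds by definition on the generators, and on a connected graded bialgebra the antipode is then forced by the usual inductive formula. So I would spend the proof on coassociativity.

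Since $\Delta$ is multiplicative it suffices to verify $(\Delta\otimes\id)\Delta(D)=(\id\otimes\Delta)\Delta(D)$ on a connected dessin $D$. Expanding both sides by \eqref{HDcoprod}, the terms containing a tensor factor $1$ agree trivially on the two sides, so everything reduces to comparing the two ``bulk'' sums
\begin{gather*}
\sum_{\delta_1\subset\delta_2\subset D}\ \delta_1\otimes(\delta_2/\delta_1)\otimes(D/\delta_2)
\qquad\text{and}\qquad
\sum_{\delta\subset D}\ \sum_{\bar\delta\subset D/\delta}\ \delta\otimes\bar\delta\otimes\big((D/\delta)/\bar\delta\big),
\end{gather*}
where on the left $\delta_2/\delta_1$ and $D/\delta_2$ each range over their admissible ribbon structures from Lemma~\ref{subquotD}, and on the right $D/\delta$, the sub-dessins $\bar\delta\subset D/\delta$, and $(D/\delta)/\bar\delta$ do likewise.

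At the level of underlying bipartite graphs with their flags, I would set up the expected bijection: a nested pair $\delta_1\subset\delta_2\subset D$ is sent to $\delta:=\delta_1$ together with $\bar\delta:=\delta_2/\delta_1$ regarded as a sub-dessin of $D/\delta_1$, and conversely a sub-dessin $\bar\delta\subset D/\delta$ is pulled back along the shrinking map $D\to D/\delta$ to a sub-dessin $\delta_2\supset\delta=\delta_1$. These constructions are inverse to one another on the combinatorial data, and the third tensor factors agree by the dessin version of the third isomorphism theorem,
\begin{gather*}
(D/\delta_1)\big/(\delta_2/\delta_1)\ \cong\ D/\delta_2,
\end{gather*}
which holds because contracting $\delta_1$ and then the residual copy $\delta_2/\delta_1$ has the same effect as contracting $\delta_2$ in a single step, by the iterated-shrinking description of the quotient in Lemma~\ref{subquotD}.

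The step I expect to be the genuine obstacle is matching the \emph{ribbon structures}, and it is precisely here that the sum in \eqref{HDcoprod} over all quotient ribbon structures is essential: coassociativity cannot hold term by term, only after this summation. On the left the relevant choices are a cyclic ordering of the boundary of a tubular neighborhood of $\delta_1$ inside the surface of $\delta_2$ (fixing the ribbon structure of $\delta_2/\delta_1$) together with a cyclic ordering of the boundary of a tubular neighborhood of $\delta_2$ in $\Sigma$ (fixing $D/\delta_2$); on the right they are a cyclic ordering for $\delta_1$ in $\Sigma$ (fixing $D/\delta$, hence also determining which $\bar\delta$ occur) together with a cyclic ordering for $\bar\delta=\delta_2/\delta_1$ in the surface of $D/\delta_1$ (fixing $(D/\delta)/\bar\delta$). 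To reconcile these I would pass to the surface picture of Corollary~\ref{quotDS} and work with a nested pair of subsurfaces $\sigma_1\subset\sigma_2\subset\Sigma$ realizing $\delta_1\subset\delta_2$: a cyclic ordering of $\partial\sigma_2$ together with the induced data on the sphere-with-holes glued in to form $\Sigma/\sigma_2$ corresponds bijectively to an ordering of $\partial\sigma_1$ together with an ordering of the boundary components of the image of $\sigma_2$ inside $\Sigma/\sigma_1$. Summing over these equivalent families of choices, the graph-level bijection above upgrades to an equality of the two bulk sums as elements of $\cH_\cD^{\otimes 3}$, which gives coassociativity and completes the verification that $\cH_\cD$ is a graded connected Hopf algebra.
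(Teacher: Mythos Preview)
Your proposal is correct and follows essentially the same route as the paper's proof: reduce everything to coassociativity on a connected dessin, expand both sides into a sum over nested pairs $\delta'\subseteq\delta\subseteq D$, invoke the bijection between sub-dessins $\tilde\delta\subset D/\delta'$ and intermediate dessins $\delta'\subseteq\delta\subseteq D$ (the ``third isomorphism'' step), and then verify that the bipartite and ribbon structures match. The only presentational difference is that you organise the ribbon-structure matching through the surface picture of Corollary~\ref{quotDS} and nested subsurfaces $\sigma_1\subset\sigma_2\subset\Sigma$, whereas the paper stays with the graph-theoretic description of Lemma~\ref{subquotD} and checks directly that the external edges of $\delta'$ in $\delta$ coincide with those in $D$; these are the two equivalent viewpoints the paper itself sets up, so this is a difference in packaging rather than in substance.
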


\begin{proof} We just need to show that $\Delta$ is coassociative. The antipode is then constructed inductively as before. It suffices to check the identity $(\Delta \otimes {\rm id})\Delta = ({\rm id} \otimes \Delta)\Delta$ on a single connected dessin $D$. We have $(\Delta \otimes {\rm id})\Delta(D)=\sum\limits_{\delta\subseteq D} \Delta(\delta)\otimes D/\delta$. If $\delta$ is multiconnected, $\Delta(\delta)$ is the product of $\Delta(\delta_j)$
over the connected components $\delta_j$, since $\Delta$ is an algebra homomorphism. We have
$\Delta(\delta)=\sum\limits_{\delta'\subseteq \delta}
\delta'\otimes \delta/\delta'$, where in the case where $\delta$ has multiple connected components
the subdessin~$\delta'$ of~$\delta$ consists of a collection
of a subdessin of each component. Similarly, in the quotient dessin~$\delta/\delta'$ we can perform the
quotient operation of Lemma~\ref{subquotD} separately on the components of~$\delta$, so that the
resulting~$\delta/\delta'$ is a product of the quotients over each component. The bipartite and ribbon
structure of~$\delta'$ are the same with respect to~$\delta$ as they are with respect to $D$, since the
set of external edges of~$\delta'$ in~$\delta$ are either external lines in~$\delta$ or internal edges of~$\delta$ that define external lines in~$\delta'$ and in each case these would also occur as external
lines of~$\delta'$ in $D$. Thus we have $(\Delta \otimes {\rm id})\Delta(D)=\sum\limits_{\delta'\subseteq \delta\subseteq D} \delta'\otimes \delta/\delta' \otimes D/\delta$. We also have $({\rm id} \otimes \Delta)\Delta(D)=\sum\limits_{\delta'\subseteq D} \delta' \otimes \Delta(D/\delta')$. As subgraphs and quotient graphs we have $\Delta(D/\delta')=\sum\limits_{\delta'\subseteq \delta\subseteq D} \delta/\delta' \otimes D/\delta$ (see \cite[Theorem~1.27]{CoMa-book}). Thus, we need to check that this identification is also compatible with the bipartite and the ribbon structures. The identification above depends on identifying subgraphs $\tilde\delta\subseteq D/\delta'$
with subgraphs $\delta\subseteq D$ with $\delta'\subseteq \delta$. Given such a $\delta$ with
$\delta'\subseteq \delta\subseteq D$ one obtains $\tilde\delta$ from $\delta$ with the quotient procedure
described in Lemma~\ref{subquotD}, which determines the bipartite and ribbon structure on $\tilde\delta$
and every $\tilde\delta\subseteq D/\delta'$ is obtained in this way from a unique $\delta\subseteq D$ with
$\delta'\subseteq \delta$, with~$\tilde\delta=\delta/\delta'$.
\end{proof}

Each of the Hopf algebras $\cH = \cH_{\cC\cD}$ or $\cH_{\cD}$ determines the respective
affine group scheme $\cG = \cG_{\cC\cD}$ or $\cG_{\cD}$ whose $\bar{\Q}$-points
are morphisms of $\Q$-algebras $\cH \to \bar{\Q}$
and multiplication is defined by $\phi_1 * \phi_2 (X)= (\phi_1\otimes \phi_2) \Delta (X)$,
with inverse $\phi^{-1}=\phi\circ S$ given by composition with the antipode.

\subsection{Hopf algebra and Galois action}\label{HopfGaloisSec}

We want to make the construction described above of the Hopf algebra $\cH_{\cD}$ of dessins
compatible with the Galois action. This would mean requiring that, for all $\gamma \in {\rm Gal}(\bar \Q/\Q)$ we have $\Delta(\gamma D)=\sum \gamma \delta \otimes \gamma D/\delta$. The condition is satisfied
if we have an identification of the class of images $\gamma \delta$ of subdessins $\delta \subset D$
with the class of subdessins of $\gamma D$ and an identification of the images of the quotient
dessins $\gamma D/\delta$ with the quotients $\gamma D/\gamma\delta$.
When we consider in the construction of the coproduct~$\Delta(D)$
of~$\cH_D$ all the possible choices of subdessins $\delta$ and quotient dessins~$D/\delta$, defined
as in Lemma~\ref{HopfDessins}, we are also including pairs $\{ \delta, D/\delta \}$ that are not necessarily
compatible with the Belyi map and the Galois action on the Belyi maps, for which the required identifications need not hold. We can see this by considering
the following example, based on of \cite[Example~2.2.15]{LaZvon}.

\begin{lem}\label{nofix}The dessins $A$ and $B$ shown in the figure are in the same Galois orbit. The subdessins $\delta_A$ and $\delta_B$ circled in the figure are also in the same Galois orbit $($which in this case consists of a single element$)$. However, the quotients $A/\delta_A$ and $B/\delta_B$ do
not belong to the same Galois orbit.
\end{lem}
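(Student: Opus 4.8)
The plan is to render the conjugate pair completely explicit, push the rational subdessin through the quotient recipe of Lemma~\ref{subquotD}, and then separate the two quotients by a numerical Galois invariant that the Galois action must preserve.

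First I would reproduce the data of \cite[Example~2.2.15]{LaZvon}. The dessins $A$ and $B$ come from a single irreducible polynomial over $\Q$ whose roots generate a nontrivial number field, so the nontrivial Galois element $\gamma$ interchanges the two Belyi maps; I would certify $\gamma A=B$ by exhibiting the common passport $(d;\mu;\nu;\rho)$---degree together with the black, white and face ramification profiles---and checking that $A$ and $B$ realise it as non-isomorphic ribbon graphs with field of moduli larger than $\Q$. The circled piece $\delta_A\subset A$ and $\delta_B\subset B$ is the same small bipartite graph, visibly defined over $\Q$, so that its Galois orbit is the singleton $\{\delta_A\}=\{\delta_B\}$ and $\gamma\delta_A=\delta_B$. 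This is precisely the configuration in which one would naively expect $\gamma(A/\delta_A)=B/\delta_B$, and hence the two quotients to lie in a common orbit.

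The heart of the argument is to defeat that expectation. I would run the procedure of Lemma~\ref{subquotD} on each side: shrink $\delta$ to a single black--white edge, read the cyclic order of its external half-edges off the boundary of an oriented tubular neighbourhood of $\delta$ in $\Sigma$, and split each quotient vertex into its black and white parts. By the grading identities of Lemma~\ref{HDgrading} the two quotients share $\#E$ and $\#V$, hence the same Euler characteristic, so genus alone cannot separate them and the separation must come from the finer ramification data. Because $A$ and $B$ carry genuinely different global ribbon structures, the half-edges of $D\smallsetminus\delta$ thread the boundary of the tubular neighbourhood in different cyclic patterns in the two cases, so the resulting quotients acquire different passports---a different face-degree multiset $(\rho_k)$, or different vertex valences. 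Since the passport is a Galois invariant, two dessins with distinct passports cannot share an orbit, which forces $A/\delta_A\not\sim B/\delta_B$.

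The main obstacle is the ribbon-structure ambiguity already flagged in Lemma~\ref{subquotD}: the quotient is only defined after choosing a cyclic ordering of the boundary components of the tubular neighbourhood, so one must ensure the separation is not an artefact of a single choice. To that end I would select the distinguishing invariant---a face degree or a vertex valence---that is determined by the orientation induced from $\Sigma$ and is insensitive to the residual reordering freedom, so that the difference of passports is witnessed unambiguously and the conclusion is independent of the choices in the quotient construction.
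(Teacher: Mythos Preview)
Your approach is essentially the paper's: separate the two quotients by a numerical Galois invariant. The paper's proof is much terser than yours---it simply notes that $\delta_A$, $\delta_B$ lie in a singleton orbit because the Galois action is trivial on trees with fewer than five edges, and then asserts that $A/\delta_A$ and $B/\delta_B$ have \emph{different vertex degrees}, which is a Galois invariant.

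Two minor points of comparison. First, where you hedge between face degrees and vertex valences, the paper commits to vertex degrees; this matters for your last paragraph, because vertex degrees of $D/\delta$ are determined by the underlying bipartite graph and are insensitive to the cyclic ordering of boundary components in Lemma~\ref{subquotD}, so your worry about the ribbon ambiguity evaporates once you fix that invariant. Second, the paper's justification for the singleton orbit of $\delta$ is the blanket fact about small trees rather than an explicit ``defined over $\Q$'' check; either is fine. Your extra scaffolding (reproducing the passport, verifying conjugacy of $A$ and $B$, invoking the grading identities) is correct but not needed for the lemma as stated, since the paper takes the conjugacy of $A$ and $B$ from \cite[Example~2.2.15]{LaZvon} as given.
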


\begin{figure}[h!]\centering
\includegraphics[scale=0.2]{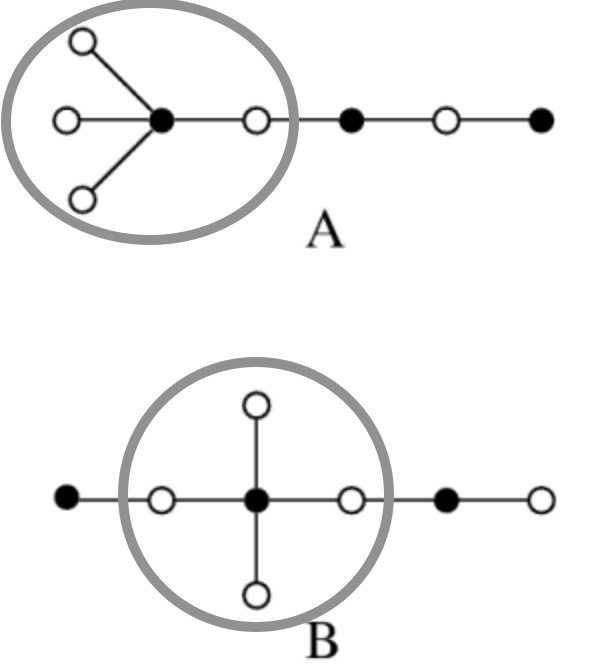}
\end{figure}

\begin{proof}The circled subgraph $\delta_B$ of $B$ is the only subdessin in the same Galois orbit
of the subgraph $\delta_A$ of $A$ since the action of the Galois group is trivial on trees with
less than five edges. The quotient dessins $A/\delta_A$ and $B/\delta_B$, however,
are not in the same Galois orbit since they have different vertex degrees (a Galois invariant).
\end{proof}

This problem can be corrected by only considering, in the
construction of the coproduct~$\Delta(D)$ of the Hopf algebra,
certain pairs $\{ \delta, D/\delta \}$ of subdessins and quotient
dessins that behave well with respect to the Galois orbit.

\begin{defn}\label{Sigmapairs}
A balanced pair $\{ \delta,D/\delta \}$ for a dessin $D$ is a pair of subdessin
and quotient dessin with the property that for $\gamma\in {\rm Gal}(\bar\Q/\Q)$
the pair $\{ \gamma \delta, \gamma D/\delta \}$ is a pair of a subdessin and
a quotient dessin for $\gamma D$. A subdessin $\delta \subset D$ is strongly
balanced if for all subdessins $\delta'\subseteq \delta$ the pair $\{ \delta', D/\delta'\}$
is balanced. Let $\cB(D)$ denote the set of strongly balanced subsessins.
\end{defn}

\begin{ex}\label{strongbal}The subdessins circled in the figure are an example of strongly balanced subdessins. Both have quotient dessin the bipartite line with three white and three black vertices.
All their subdessins consist of either a single edge with a white and a black vertex, or
two edges with a common black vertex and two white vertices. In the first case the
quotient dessin is just~$A$ or~$B$ itself and in the second case the quotients are given
by two tree dessins on six edges that are in the same Galois orbit.
\end{ex}

\begin{figure}[h!]\centering
\includegraphics[scale=0.2]{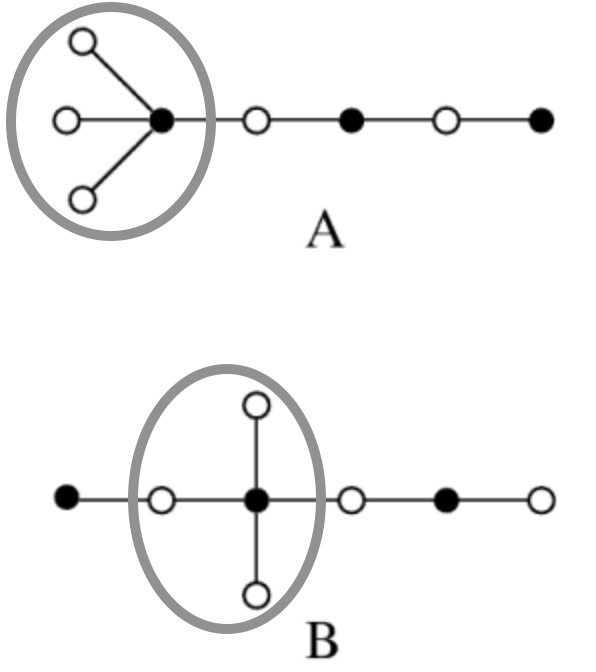}
\end{figure}

Note that the balanced property depends on the entire Galois orbit of $D$ not just on $D$ itself,
hence the set $\cB(D)$ of strongly balanced pairs is associated to all dessins~$D$ in the same orbit.
The coproduct of the Hopf algebra $\cH_\cD$ is then modified by summing in~\eqref{HDcoprod}
only over the strongly balanced subdessins $\delta$, with each appearing once in the coproduct,
\begin{gather}\label{HDcoprodB}
\Delta(D)=D\otimes 1 + 1\otimes D + \sum_{\delta\in \cB(D)} \delta \otimes D/\delta .
\end{gather}
In the following we just write for simplicity of notation $\Delta(D)=\sum\limits_{\cB(D)} \delta \otimes D/\delta$ with the primitive part $D\otimes 1 + 1\otimes D$ implicitly included in the sum.

\begin{lem}\label{coprodBcoass}The restriction \eqref{HDcoprodB} of the coproduct to the strongly balanced subdessins maintains the coassociativity property.
\end{lem}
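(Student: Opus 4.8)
The plan is to bootstrap from the coassociativity of the unrestricted coproduct \eqref{HDcoprod} already established in Proposition~\ref{HopfDessins}, and to show that passing to the restricted coproduct \eqref{HDcoprodB} merely selects, from the common triple sum computed there, a subfamily of indices that is stable under the interchange dictated by coassociativity. Expanding $(\Delta\otimes\id)\Delta(D)$ and $(\id\otimes\Delta)\Delta(D)$ with the restricted coproduct gives, respectively,
\begin{gather*}
\sum_{\delta\in\cB(D)}\ \sum_{\delta'\in\cB(\delta)} \delta'\otimes \delta/\delta'\otimes D/\delta,\\
\sum_{\delta'\in\cB(D)}\ \sum_{\tilde\delta\in\cB(D/\delta')} \delta'\otimes \tilde\delta\otimes (D/\delta')/\tilde\delta.
\end{gather*}
From Proposition~\ref{HopfDessins} the graph-level identities $\tilde\delta=\delta/\delta'$ and $(D/\delta')/\tilde\delta=D/\delta$, together with the matching of the bipartite and ribbon structures, already hold under the bijection $\delta\leftrightarrow(\delta',\delta/\delta')$ for nested $\delta'\subseteq\delta\subseteq D$. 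Hence it suffices to verify that this bijection carries the index set $\{(\delta,\delta')\colon \delta\in\cB(D),\ \delta'\in\cB(\delta)\}$ onto $\{(\delta',\tilde\delta)\colon \delta'\in\cB(D),\ \tilde\delta\in\cB(D/\delta')\}$.

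First I would record the hereditary (downward-closed) nature of strong balancedness: if $\delta\in\cB(D)$ and $\delta'\subseteq\delta$, then $\delta'\in\cB(D)$, since Definition~\ref{Sigmapairs} asks the pair $\{\eta,D/\eta\}$ to be balanced for every sub-dessin $\eta\subseteq\delta'$, and every such $\eta$ is in particular a sub-dessin of $\delta$, for which balancedness is guaranteed by $\delta\in\cB(D)$. This supplies the first coordinate $\delta'\in\cB(D)$ required on the right-hand index set.

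The heart of the matter is the compatibility of strong balancedness with the quotient operation across two nested levels, namely the equivalence
\begin{gather*}
\big(\delta\in\cB(D)\ \text{and}\ \delta'\in\cB(\delta)\big)\ \Longleftrightarrow\ \big(\delta'\in\cB(D)\ \text{and}\ \delta/\delta'\in\cB(D/\delta')\big).
\end{gather*}
To prove it I would unwind Definition~\ref{Sigmapairs} and invoke the Galois-equivariance of the iterated sub-dessin/quotient construction of Lemma~\ref{subquotD}: for $\gamma\in\Gal(\bar\Q/\Q)$ one has $\gamma(D/\delta')=\gamma D/\gamma\delta'$ and $\gamma(\eta/\delta')=\gamma\eta/\gamma\delta'$ once $\delta'$ is itself balanced, while the correspondence used in Proposition~\ref{HopfDessins} identifies each sub-dessin $\tilde\delta\subseteq D/\delta'$ with the image $\eta/\delta'$ of a unique $\eta$ satisfying $\delta'\subseteq\eta\subseteq D$. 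Under this dictionary, balancedness of $\{\tilde\delta,(D/\delta')/\tilde\delta\}$ for $D/\delta'$ becomes balancedness of $\{\eta,D/\eta\}$ for $D$; translating the universal quantifier ``for all sub-dessins'' through it then converts the strong balancedness of $\delta$ in $D$ precisely into the conjunction of strong balancedness of $\delta'$ in $D$ and of $\delta/\delta'$ in $D/\delta'$, which is the asserted bijection of index sets and hence coassociativity of \eqref{HDcoprodB}.

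I expect the main obstacle to be exactly this level-changing step, because the ribbon structure on a quotient dessin depends, by Lemma~\ref{subquotD} and Corollary~\ref{quotDS}, on a chosen cyclic ordering of the boundary components of a tubular neighborhood; one must therefore check that the identifications $\gamma(\eta/\delta')=\gamma\eta/\gamma\delta'$ and $(D/\delta')/(\eta/\delta')=D/\eta$ can be made to respect these choices uniformly in $\gamma$, so that the clause ``balanced for all $\gamma$'' genuinely transfers between the two levels rather than being obstructed by an incompatible choice of ordering.
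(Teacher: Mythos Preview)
Your approach is essentially the same as the paper's: reduce to the coassociativity already proved in Proposition~\ref{HopfDessins} and check that the restriction to strongly balanced subdessins is stable under the nested-pair bijection $\delta'\subseteq\delta\subseteq D\leftrightarrow(\delta',\delta/\delta'\subseteq D/\delta')$. The paper's own argument is terser: it only spells out one direction (from $\delta\in\cB(D)$, $\delta'\in\cB(\delta)$ deduce $\delta/\delta'\in\cB(D/\delta')$, using that subdessins of $\delta/\delta'$ correspond to $\delta/\delta''$ for $\delta'\subseteq\delta''\subseteq\delta$) and then defers to Proposition~\ref{HopfDessins}. Your write-up is more complete in that you isolate the hereditary step $\delta'\in\cB(D)$ explicitly and state the full equivalence of index conditions; the concern you flag about the ribbon-structure choices in the quotient (Lemma~\ref{subquotD}, Corollary~\ref{quotDS}) is a genuine subtlety that both proofs leave implicit.
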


\begin{proof} We have $(\Delta \otimes {\rm id})\Delta (D) =\sum\limits_{\delta\in \cB(D)} \Delta(\delta)\otimes D/\delta$
and $\Delta(\delta)=\sum\limits_{\delta' \in \cB(\delta)} \delta' \otimes \delta/\delta'$. Because of the strongly
balanced conditions $\cB(D)$ and $\cB(\delta)$, for $\delta'\subset \delta$ we have both $\gamma D/\delta'=
\gamma D/ \gamma \delta'$ and $\gamma \delta/\delta' =\gamma\delta/ \gamma \delta'$. Thus, the pair
$(\delta/\delta', D/\delta')$ with $D/\delta'=(D/\delta)/(\delta/\delta')$ is a balanced pair for~$D/\delta$, and in fact strongly balanced, since any subdessin of $\delta/\delta'$ can be identified with a~$\delta/\delta''$ for some subdessin $\delta''\subset \delta$. Thus, the rest of the argument of Proposition~\ref{HopfDessins} continues to hold.
\end{proof}

We still denote by $\cG$ the affine group scheme dual to the Hopf algebra $\cH_\cD$ endowed with the coproduct~\eqref{HDcoprodB}.

\begin{prop}\label{GHopfact} The action of the absolute Galois group $G={\rm Gal}\big(\bar\Q/\Q\big)$ passes through the automorphism group scheme ${\rm Aut}(\cG)$ of the affine group scheme $\cG$.
\end{prop}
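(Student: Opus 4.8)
The plan is to show that each $\gamma \in G$ induces a Hopf algebra automorphism of $\cH_\cD$ (equipped with the restricted coproduct~\eqref{HDcoprodB}), and then to dualize, so that $\gamma$ acts as an automorphism of the affine group scheme $\cG$ and the assignment is a group homomorphism $G \to \Aut(\cG)$.

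First I would establish that $\gamma$ defines an algebra automorphism of $\cH_\cD$. Since $\cH_\cD$ is the free commutative polynomial $\Q$-algebra on isomorphism classes of connected dessins (Proposition~\ref{HopfDessins}), it suffices to check that $\gamma$ permutes these generators. The Galois action of Section~\ref{section2.1} is induced by conjugating the defining irreducible polynomial $P \in \bar\Q(t)[z]$; the conjugate $P_\gamma$ is again irreducible, so the \'etale algebra of $\gamma D$ remains a field and $\gamma D$ remains connected. The action respects isomorphism of coverings, hence descends to a permutation of isomorphism classes of connected dessins, and extending multiplicatively yields an algebra automorphism whose inverse is induced by $\gamma^{-1}$.

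The central step is the compatibility with the coproduct, $\Delta(\gamma D) = (\gamma \otimes \gamma)\Delta(D)$, and here I would invoke the strongly balanced condition of Definition~\ref{Sigmapairs}. Because the balanced property is a property of the entire Galois orbit of $D$, the set of strongly balanced subdessins satisfies $\cB(\gamma D) = \gamma\,\cB(D)$, i.e.\ $\delta \in \cB(D)$ if and only if $\gamma\delta \in \cB(\gamma D)$; moreover the defining property of a balanced pair gives that $\gamma\delta$ is a subdessin of $\gamma D$ with matching quotient $(\gamma D)/(\gamma\delta) = \gamma(D/\delta)$. Starting from
\begin{gather*}
\Delta(\gamma D) = \gamma D \otimes 1 + 1 \otimes \gamma D + \sum_{\delta' \in \cB(\gamma D)} \delta' \otimes (\gamma D)/\delta'
\end{gather*}
and re-indexing the sum via $\delta' = \gamma\delta$ then identifies it term by term with $(\gamma \otimes \gamma)\Delta(D)$.

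Compatibility with the counit is immediate, and compatibility with the antipode is automatic for a connected graded Hopf algebra once the bialgebra structure is preserved (the antipode being the unique convolution inverse of the identity, characterized inductively by $S(X)=-X+\sum S(X')X''$). Hence $\gamma$ is a Hopf algebra automorphism of $\cH_\cD$, and by duality it induces an automorphism of $\cG$ acting on $\bar\Q$-points by $\phi \mapsto \phi \circ \gamma^{-1}$ and respecting the convolution product $\phi_1 * \phi_2$ precisely because $\gamma$ respects $\Delta$; since the Galois action on dessins is a group action, this assignment is a group homomorphism $G \to \Aut(\cG)$. The main obstacle is genuinely the coproduct step: without restricting to strongly balanced subdessins one cannot match $\cB(\gamma D)$ with $\gamma\,\cB(D)$ term by term, as Lemma~\ref{nofix} shows that generic quotients $\gamma(D/\delta)$ need not equal $(\gamma D)/(\gamma\delta)$. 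The strongly balanced condition is engineered precisely to remove this obstruction, so that once it is in place together with Lemma~\ref{coprodBcoass} the verification becomes essentially bookkeeping.
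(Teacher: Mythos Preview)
Your proof is correct and follows essentially the same route as the paper: verify that each $\gamma$ is a bialgebra homomorphism of $\cH_\cD$ by checking compatibility with multiplication (componentwise action) and with the coproduct (via the strongly balanced condition ensuring $\cB(\gamma D)=\gamma\,\cB(D)$ and $\gamma(D/\delta)=(\gamma D)/(\gamma\delta)$), then note that antipode compatibility is automatic and dualize. Your write-up is slightly more explicit about connectedness and the dualization step, but the argument is the same.
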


\begin{proof} It suffices to show that the $G$-action on dessins induces an action on $\cH$ by
bialgebra homomorphisms. The compatibility with the multiplication is clear since on
non-connected dessins the action of $G$ is defined componentwise, so $\gamma (D_1 \cdot D_2)=
\gamma D_1 \cdot \gamma D_2$. The compatibility with comultiplication $\Delta(\gamma D)=
\sum\limits_{\delta\subseteq D} \gamma \delta \otimes \gamma D/\delta$ requires the identification of
the class of images~$\gamma \delta$ of subdessins $\delta \subset D$ with the class of subdessins
of $\gamma D$ and the identification of the images of the quotient dessins $\gamma D/\delta$ with
the quotients $\gamma D/\gamma\delta$. This is ensured by the balanced condition on the
pairs $\{ \delta, D/\delta \}$ introduced in the coproduct~\eqref{HDcoprodB}.
Thus, elements $\gamma \in G$ act as bialgebra homomorphisms on~$\cH$. The compatibility
$S \circ \gamma =\gamma \circ S$ with the antipode $S$ then follows for a general bialgebra homomorphism,
hence they are automorphisms of the Hopf algebra $\cH$ and dually of the affine group scheme $\cG$.
\end{proof}

\subsection{Rota--Baxter algebras and Birkhoff factorization} Let $\cR$ be a commutative algebra.
A structure of Rota--Baxter algebra of weight $-1$ on it is given by a linear operator $T\colon \cR \to \cR$ satisfying the Rota--Baxter relation of weight $-1$:
\begin{gather}\label{RBrel}
T(xy) + T(x) T(y) = T(x T(y)) + T(T(x) y).
\end{gather}
The ring of Laurent series with $T$ given by the projection onto the polar part is the prototype
example of a Rota--Baxter algebra of weight $-1$. The Rota--Baxter relation~\eqref{RBrel}
ensures that the range $\cR_+=(1-T)\cR$ is a subalgebra of~$\cR$, not just a linear subspace,
and $\cR_-=T \cR$ as well.

Consider now $\cR$-valued characters of, say, $\cH_\cD$.

The Birkhoff factorization $\phi=(\phi_- \circ S) \star \phi_+$ of a
character $\phi\in {\rm Hom}_{{\rm Alg}}(\cH_\cD,\cR)$ is defined inductively by the formula
\begin{gather*}
\phi_- (x) = -T (\phi(x) + \sum \phi_-(x') \phi(x'')),\nonumber \\
\phi_+(x) = (1-T) (\phi(x) + \sum \phi_-(x') \phi(x'')),
\end{gather*}
where $\Delta(x)=\sum x' \otimes x''$ in Sweedler notation.
The factors $\phi_\pm$ are algebra homomorphisms
$\phi_\pm \in {\rm Hom}_{{\rm Alg}}(\cH_\cD,\cR_\pm)$.

In the case where the Rota--Baxter algebra is an algebra of Laurent power
series with the operator $T$ given by projection onto the polar part, the
evaluation at $0$ of the positive part $\phi_+$ of the Birkhoff factorization
has the effect of killing the polar part, hence it achieves renormalization,
see~\cite{CK}. Such a systematic deletion of ``divergences'' (at least, formal ones) is used
in the Connes--Kreimer algebraic theory of renormalization in quantum field theory~\cite{CK}. For a general introduction to Rota--Baxter algebras and their properties see~\cite{Guo}. In our context,
formal sums over dessins d'enfant with weights defining partition functions
are only formal series because ``there are too many'' such dessins, so that
divergences should be deleted by renormalization. This will be achieved through
a deformation of our quantum statistical mechanical system, which we introduce
in Section~\ref{RegSysSec}, rather than through a Birkhoff factorization procedure.
Birkhoff factorizations, however, have other interesting applications besides the elimination
of divergences in quantum field theory. As shown in~\cite{MarTed} in the
context of computation, Birkhoff factorization can be used to enrich invariants of a graph by
compatibly combining invariants of subgraphs and quotient graphs.
The reason why we are considering them here is similar to the applications
considered in~\cite{MarTed}, namely as a method for constructing new Galois
invariants of dessins, as we show in the following.

\begin{lem}\label{GalBirkhoff}The action $\phi\mapsto \gamma\phi$ of $G={\rm Gal}\big(\bar\Q/\Q\big)$ on the group-scheme $\cG_\cD$ is compatible with the Birkhoff factorization, in the sense that $(\gamma \phi)_\pm=\gamma \cdot \phi_\pm$.
\end{lem}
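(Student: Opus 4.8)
The plan is to argue by induction on the grading of $\cH_\cD$ (the grading of Proposition~\ref{HopfDessins} by $\#E(D)-b_0(D)$), feeding the recursive Bogoliubov--Birkhoff formulas with the coproduct-compatibility of the Galois action established in Proposition~\ref{GHopfact}. Write $\sigma=\sigma_\gamma\colon \cH_\cD \to \cH_\cD$ for the bialgebra automorphism induced by $\gamma$ (so $\sigma(D)=\gamma D$ on generators), and read the action on characters as precomposition, $\gamma\phi=\phi\circ\sigma$ and likewise $\gamma\cdot\phi_\pm=\phi_\pm\circ\sigma$. Since the degree of the Belyi map is a Galois invariant, $\sigma$ is a \emph{graded} automorphism, so it respects the filtration driving the recursion; and since $\sigma$ is counital with $\sigma(1)=1$, it commutes with the reduced coproduct, $\tilde\Delta\circ\sigma=(\sigma\otimes\sigma)\circ\tilde\Delta$, where $\tilde\Delta(x)=\sum x'\otimes x''$ has both factors of strictly lower degree.

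First I would record the base case: on the degree-zero part $\Q\cdot 1$ one has $\phi_\pm(1)=1$ and $\sigma(1)=1$, so $(\gamma\phi)_\pm(1)=1=(\gamma\cdot\phi_\pm)(1)$ trivially. For the inductive step, fix $x$ homogeneous of degree $n$ and assume the identity for all elements of lower degree. The defining formula gives
\[
(\gamma\phi)_-(x)=-T\Big((\gamma\phi)(x)+\sum (\gamma\phi)_-(x')(\gamma\phi)(x'')\Big),
\]
where $(\gamma\phi)(x)=\phi(\sigma(x))$ and $(\gamma\phi)(x'')=\phi(\sigma(x''))$ by definition, while $(\gamma\phi)_-(x')=\phi_-(\sigma(x'))$ by the inductive hypothesis (as $\deg x'<n$). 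Substituting and then using $\sum \sigma(x')\otimes\sigma(x'')=\tilde\Delta(\sigma(x))$, the argument of $T$ becomes exactly $\phi(\sigma(x))+\sum_{(\sigma x)}\phi_-((\sigma x)')\phi((\sigma x)'')$, which is precisely the argument appearing in $\phi_-(\sigma(x))=(\gamma\cdot\phi_-)(x)$. Hence $(\gamma\phi)_-=\gamma\cdot\phi_-$, and since $\phi_+=(1-T)(\cdots)$ applies the operator $1-T$ (which does not involve $\gamma$) to the identical argument, $(\gamma\phi)_+=\gamma\cdot\phi_+$ follows at once.

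The only thing that makes this go through is the coproduct-compatibility $\tilde\Delta\sigma=(\sigma\otimes\sigma)\tilde\Delta$, which is exactly the content of Proposition~\ref{GHopfact} and rests on restricting the coproduct~\eqref{HDcoprodB} to strongly balanced pairs; without that restriction the identification of $\gamma\delta$ with subdessins of $\gamma D$ fails (cf.\ Lemma~\ref{nofix}) and the recursion would not intertwine the two factorizations. I do not expect a genuine obstacle elsewhere: the Rota--Baxter operator $T$ acts on the target algebra $\cR$, whereas $\gamma$ acts only on the source $\cH_\cD$, so $T$ is literally untouched by the symmetry and no equivariance of $T$ needs checking. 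The whole argument is thus a transport of the Connes--Kreimer recursion through the graded coalgebra automorphism $\sigma$.
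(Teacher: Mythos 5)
Your proposal is correct and follows essentially the same route as the paper's proof: both rest on the coproduct-compatibility $\Delta(\gamma x)=\sum \gamma x'\otimes\gamma x''$ from Proposition~\ref{GHopfact} and feed it into the recursive Bogoliubov formula, with $T$ acting only on the target $\cR$ and hence commuting with $\gamma$. The only difference is that you make explicit the induction on the grading that the paper compresses into the phrase ``consistently defines,'' which is a welcome clarification rather than a divergence.
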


\begin{proof} The action of $G$ on $\cG_\cD$ is determined by the action of $G$ on $\cH_\cD$ by
Hopf algebra homomorphisms. We have $\Delta(\gamma x)=\sum \gamma x' \otimes \gamma x''$.
Thus we obtain $\phi_\pm (\gamma x)$ as either $-T$ or $1-T$ applied to $\phi (\gamma x) + \sum \phi_- (\gamma x') \phi(\gamma x'')$. This consistently defines $\gamma \phi_\pm (x)$ as $\phi_\pm(\gamma x)$ compatibly with the action of $G$ on ${\rm Hom}(\cH_\cD,\cR_\pm)$.

\begin{lem}\label{idealHopf}The action of the Galois group $G={\rm Gal}\big(\bar\Q/\Q\big)$ by Hopf algebra homomorphisms of $\cH_\cD$ determines a Hopf ideal and a quotient Hopf algebra $\cH_{\cD,G}$.
\end{lem}

\begin{proof} Consider the ideal $\cI_{\cD,G}$ of the algebra $\cH_\cD$ generated by elements of the form
$D -\gamma D$ for $D\in \cD$ and $\gamma \in G$. To see that $\cI_{\cD,G}$ is a Hopf ideal notice
that we have $\Delta(D) - \Delta(\gamma D) =\sum \delta'\otimes \delta'' - \sum \gamma \delta' \otimes
\gamma \delta'' = \sum (\delta' - \gamma \delta')\otimes \delta'' + \sum \gamma \delta' \otimes (\delta'' -\gamma \delta'')$ hence $\Delta(\cI_{\cD,G})\subset \cI_{\cD,G}\otimes \cH_\cD + \cH_\cD \otimes \cI_{\cD,G}$. Since $\cI_{\cD,G}$ is a Hopf ideal, it defines a quotient Hopf algebra $\cH_{\cD,G}=\cH_\cD / \cI_{\cD,G}$.
\end{proof}

Characters in ${\rm Hom}_{{\rm Alg}}(\cH_{\cD,G},\cR)$ with $\cR$ a commutative algebra over $\Q$
are $\cR$-valued Galois invariants of dessins that satisfy the multiplicative property
$\phi(D\cdot D')=\phi(D) \phi(D')$ over connected components. The following is then an immediate
consequence of the previous statements.

\begin{prop}\label{BirkhoffHDG}Given a Galois invariant of dessins $\phi\in {\rm Hom}_{{\rm Alg}}(\cH_{\cD,G},\cR)$, where $(\cR,T)$ is a Rota--Baxter algebra of weight $-1$, the Birkhoff factorization $\phi=(\phi_-\circ S)\star \phi_+$ determines new Galois invariants $\phi_\pm \in {\rm Hom}_{{\rm Alg}}(\cH_{\cD,G},\cR_\pm)$.
\end{prop}

One should regard the Galois invariants $\phi_\pm$ constructed in this way as a refinement
of the Galois invariant $\phi$ in the sense that they do not depend only on the value of $\phi$
on $D$, but also inductively on the values on all the subdessins $\delta$ and the quotient
dessins $D/\delta$, hence for example they can potentially distinguish between non-Galois
conjugate dessins with the same value of $\phi$, but for which the invariant $\phi$ differs
on subdessins or quotient dessins.

\subsection{New Galois invariants of dessins from Birkhoff factorization}\label{TutteSec}

We discuss here a simple explicit example, to show how the formalism
 of Rota--Baxter algebras and Bikrhoff factorizations recalled in the
 previous subsection can be used as a method to construct new
 Galois invariants of dessins.

We start by considering a known invariant of dessins with values in a
 polynomial algebra. This is given by the $2$-variable Tutte polynomial,
 as well as its $3$-variable generalization, the Bollob\'as--Riordan--Tutte polynomial
 of~\cite{BoRio} defined for ribbon graphs. Both can be applied to dessins $D$
 and we will show in Lemma~\ref{TutteGal} that both are Galois invariants of dessins.
 We also consider some possible specializations of these polynomials,
 related to the Jones polynomial, as discussed in~\cite{Das}.
 As already discussed in \cite{AluMa1,AluMa2} in the context of the
 Connes--Kreimer Hopf algebra of Feynman graphs, such invariants derived from
 the Tutte polynomial can be regarded as ``algebraic Feynman rules'', which
 simply means characters of the Hopf algebra~$\cH$: homomorphisms of
 commutative algebras from $\cH$ to a commutative algebra $\cR$ of polynomials
 (Tutte and Bollob\'as--Riordan case) or of Laurent polynomials (Jones case).

We then consider
 a Rota--Baxter structure of weight $-1$ on the target algebra of polynomials
 or of Laurent polynomials
 and use the Birkhoff factorization formula to obtain new Galois invariants
 of dessins.

 The main point of this construction in our environment is not
 just subtraction of divergences: the intended effect is to
 introduce new invariants that are sensitive not only to~$D$ itself but
 that encode in a consistent way (where consistency is determined by the Rota--Baxter operator)
 the information carried by all the sub-dessins $\delta$ and the
 quotient dessins $D/\delta$ as well.

 The Tutte polynomial of a graph is determined uniquely by a deletion-contraction
 relation and the value on graphs consisting of a set of vertices with
 no edges, to which the computation reduces by repeated application
 of the deletion-contraction relation. It can also be defined by a~closed
 ``sum over states'' formula
 \begin{gather}\label{Tutte}
 P_D(x,y) = \sum_{\delta \subset D} (x-1)^{b_0(\delta)-b_0(D)} (y-1)^{b_1(\delta)},
 \end{gather}
 where here the sum is over all subgraphs $\delta\subset D$ with the same
 set of vertices $V(\delta)=V(D)$ but with fewer edges $E(\delta)\subset E(D)$.

 The Bollob\'as--Riordan--Tutte polynomial is defined for an oriented
 ribbon graph. It can also be characterised in terms of a deletion-contraction
 relation or in terms of a state-sum formula, which generalizes \eqref{Tutte} in the form
 \begin{gather}\label{BRTutte}
 BR_D(x,y,z) =\sum_{\delta \subset D} (x-1)^{b_0(\delta)-b_0(D)} (y-1)^{b_1(\delta)} z^{b_0(\delta)+b_1(\delta)-f(\delta)} ,
 \end{gather}
 with $f(\delta)$ the number of faces of the surface embedding of $\delta$.
 Note that here, for consistency with \eqref{Tutte}, we used $y-1$ instead of $y$ in
 the analogous formula of~\cite{BoRio}: switching to the $y-1$ variable is also advocated
 in the last section of~\cite{BoRio} for symmetry reasons.

As shown in \cite[Theorem~5.4]{Das}, the specialization of the Bollob\'as--Riordan--Tutte polynomial
 \begin{gather*}
 t^{2-2 \#V(D)+\#E(D)} BR_D\big({-}t^4,1-t^{-2}\big(t^2+t^{-2}\big),\big(t^2+t^{-2}\big)^2\big)
 \end{gather*}
gives the Kauffman bracket (for a link projection whose associated connected ribbon graphs is~$D$). It is similarly known that the Jones polynomial is obtained from the $2$-variable Tutte polynomial by specializing to $P_D(-t,-1/t)$. The specialization to the diagonal $P_D(t,t)$ of the Tutte polynomial, on the other hand, gives the Martin polynomial~\cite{Martin}. Such $1$-variable specializations, with values in polynomials or Laurent polynomials, are also invariants of dessins~$D$.

 \begin{lem}\label{TutteGal} The Tutte polynomial $P_D(x,y)$ is a Galois invariant of dessins~$D$,
 namely $P_{\gamma D}(x,y)=P_D(x,y)$ for all $\gamma \in G={\rm Gal}\big(\bar\Q/\Q\big)$.
 Similarly, the Bollob\'as--Riordan--Tutte polynomial $BR_D(x,y,z)$ is a Galois invariant of dessins~$D$.
 \end{lem}

\begin{proof} Note that if $D$ is a dessin, then the quantities $b_0(D)$, $\#E(D)=d$
 and $\#V(D)=m+n$ are Galois invariants and so is $b_1(D)$ by the Euler
 characteristic formula. For $\gamma \in G={\rm Gal}\big(\bar\Q/\Q\big)$ we then have
 \begin{gather*} P_{\gamma D}(x,y) = \sum_{\delta' \subset \gamma D} (x-1)^{b_0(\delta')-b_0(D)} (y-1)^{b_1(\delta')}. \end{gather*}
As in Proposition~\ref{GHopfact}, we can argue that the set of subgraphs $\delta'\subset \gamma D$ with $V(\delta')=V(\gamma D)$ and $E(\delta')\subset E(\gamma D)$ can be identified with the set of subgraphs $\gamma \delta$ where $\delta$ ranges over the
 subgraphs of $D$ with $V(\delta)=V(D)$ and $E(\delta)\subset E(D)$. Thus, we get
 \begin{gather*} P_{\gamma D}(x,y) = \sum_{\gamma \delta \subset \gamma D} (x-1)^{b_0(\gamma \delta)-b_0(D)} (y-1)^{b_1(\gamma \delta)} = P_D (x,y) ,\end{gather*}
 since $b_0(\gamma \delta)=b_0(\delta)$ and $b_1(\gamma \delta)=b_1(\delta)$. The case of the
 Bollob\'as--Riordan--Tutte polynomial $BR_D(x,y,z)$ is similar: we only need to verify that the number of
 faces $f(D)$ of the embedding of $D$ in $\Sigma$ is also a Galois invariant. This is the case (see also
 Lemma~\ref{fiberinvs} below)
 since by the Riemann--Hurwitz formula $\chi(\Sigma)=-d +m +n +r$, where the degree $d$ and
 the ramification indices $m$, $n$, $r$ at $0$, $1$, $\infty$ are all Galois invariants. Writing $\chi(\Sigma)=\# V(D)-\# E(D)+ f(D)$  then shows $f(D)$ is also a Galois invariant.
 \end{proof}

 Note that, as shown explicitly by the state-sum formulae~\eqref{Tutte} and~\eqref{BRTutte},
 these invariants are only sensitive to simple Galois invariants such as $b_0$, $b_1$, for
 subdessins $\delta\subset D$. However, when we apply to this invariant the Birkhoff factorization procedure we obtain more refined invariants that also depend on the quotient dessins $D/\delta$.

We now discuss Rota--Baxter structures. In the case of specializations to
 Kauffman brackets and Jones polynomials, which take values in the algebra~$\cR$
 of one-variable Laurent polynomials, we can use the Rota--Baxter operator~$T$ of
 weight $-1$ given by projection onto the polar part. In the case of Tutte and
 Bollob\'as--Riordan--Tutte polynomials with values in a multivariable polynomial
 algebra, we can consider a Rota--Baxter operator built out of the following
 Rota--Baxter structure of weight $-1$ on the algebra of polynomials, applied
 in each variable. As shown in~\cite{Guo2}, the algebra $\cR=\Q[t]$ with the vector space basis given by
 the polynomials
 \begin{gather}\label{pinpol}
 \pi_n(t) =\frac{t (t+1)\cdots (t+n-1)}{n!},
 \end{gather}
 for $n\geq 1$ and $\pi_0(t)=1$, is a Rota--Baxter algebra of weight $-1$ with
 the Rota--Baxter opera\-tor~$T$ given by the linear operator defined on this basis by
 \begin{gather*} T(\pi_n)=\pi_{n+1}. \end{gather*}
 In the case of multivariable polynomials, Rota--Baxter structures of
 weight $-1$ can be constructed using the tensor product in the category of
 commutative Rota--Baxter algebras described in~\cite{ChuGuo}.

\begin{prop}\label{newGalois} Let $\phi\in\Hom_{{\rm Alg}_\Q}( \cH_\cD, \cR)$ be either the Tutte or
 Bollob\'as--Riordan--Tutte polynomial or one of its specializations to
 Kauffman bracket or Jones polynomial or Martin polynomial. Let $\cR$ be endowed with
 the Rota--Baxter operator~$T$ of weight~$-1$ as above.
 Then the Birkhoff factorization $\phi=(\phi_- \circ S) \star \phi_+$
 determines new Galois invariants $\phi_\pm\in \Hom_{{\rm Alg}_\Q}(\cH_{\cD,G}, \cR_\pm)$
 which depend on both the strongly balanced subdessins $\delta\subset D$ and their
 quotient dessins $D/\delta$.
 \end{prop}

 \begin{proof} We discuss explicitly only the one-variable cases of the Martin polynomial
 specialization $PD(t,t)$ and the Jones polynomial specialization $P_D(-t,-1/t)$
 of the Tutte polynomial. The other cases are not conceptually different but computationally
 more complicated.
 For the Jones specialization $P_D(-t,-1/t)$ is a Laurent polynomial and we
 use the Rota--Baxter operator $T$ of projection onto the polar part. We obtain
 two new invariants $P_{D,\pm}$ with $P_{D,+}(t)$ a polynomial in $t$ and $P_{D,-}(u)$
 a polynomial in $u=1/t$ given by
 \begin{gather*} P_{D,-}(1/t) := -T (P_D(-t,-1/t)+\sum_{\delta\in \cB(D)} P_{\delta,-}(1/t) P_{D/\delta}(-t,-1/t) ), \\
 P_{D,+}(t) :=(1-T)(P_D(-t,-1/t)+\sum_{\delta\in \cB(D)} P_{\delta,-}(1/t) P_{D/\delta}(-t,-1/t)). \end{gather*}
 In the case of the Martin specialization $P_D(t,t)$ of the Tutte polynomial, let $\pi_n$ be
 the linear basis of $\Q[t]$ as in~\eqref{pinpol} and let
 \begin{gather*} P_D(t,t)=\sum_{n\geq 0} a_n(D) \pi_n(t) \end{gather*}
 be the expansion in this basis of the Martin polynomial. Let $u_{m,n,k} \in \Q$ be the combinatorial
 coefficients satisfying $\pi_n \pi_m =\sum_k u(m,n,k) \pi_k$. The Rota--Baxter operator in this case is the linear map $T (\pi_n)=\pi_{n+1}$ so that $T(P_D(t,t))=\sum\limits_{n\geq 1} a_{n-1}(D) \pi_n$. In this case, the new invariants~$P_{D,\pm}(t)$ obtained from the Birkhoff factorization are polynomial
 invariants of the form
 \begin{gather*} P_{D,\pm}(t) :=\sum_{k\geq 0} a_k^\pm(D) \pi_k(t),\\
a_k^-(D):=a_{k-1}(D) +\sum_\delta \sum_{m,n} a_m^-(\delta) a_n(D/\delta) u_{m,n,k-1},\\
 a_k^+(D):=(a_k(D)-a_{k-1}(D)) +\sum_\delta \sum_{m,n} a_m^-(\delta) a_n(D/\delta) (u_{m,n,k}-u_{m,n,k-1}). \end{gather*}
This shows explicitly how passing from the coefficients $a_n(D)$ to the coefficients $a_n^\pm(D)$
incorporates the information on the Galois invariants of all the quotient dessins $D/\delta$ as
well as sub-dessins $\delta$ in a combination dictated by the specific form of the Rota--Baxter operator.
 \end{proof}

\subsection{The semigroup of Belyi-extending maps}\label{BelyiExtSec}

Belyi-extending maps were introduced in \cite{Wood} as a way to obtain new Galois invariants for
the action of the absolute Galois group on dessins d'enfant. They consist of maps $h\colon \P^1\to \P^1$,
defined over~$\Q$, ramified only at $\{ 0,1,\infty \}$, and mapping $\{ 0,1,\infty \}$ to itself. In particular, they have the property that, if $f\colon \Sigma \to \P^1$ is a Belyi map, then the composition $h\circ f$ is still a~Belyi map. This defines an action of the semigroup of Belyi-extending maps on dessins, which commutes with the action of $G={\rm Gal}(\bar\Q/\Q)$.

In the following, we will denote by $\cE$ the semigroup of Belyi-extending maps with the operation of
composition. We define the product in $\cE$ as $\eta_1\cdot \eta_2=\eta_2\circ \eta_1$, for the
convenience of writing semigroup homomorphisms rather than anti-homomorphism later in this
section, since the action of $\cE$ on Belyi maps will be by composition on the right.

Notice that the semigroup $\N$ of self maps of $\bG_m$ (extended to self maps of $\P^1$
ramified at~$0$ and~$\infty$), used in~\cite{CCM} for the construction of the Bost--Connes endomotive
can be seen as a~subsemigroup of the semigroup $\cE$ of Belyi-extending maps.

We can in principle consider two different versions of the semigroup $\cE$, one
as in \cite{Wood}, where we consider all Belyi-extending maps (mapping the set
$\{0,1,\infty \}$ into itself), and one where we consider only those Belyi-extending maps
that map it to itself. The first choice has the advantage of giving rise to a larger set of
new Galois invariants of dessins. In particular, the
only known example of a Belyi-extending map separating Galois-orbits, as shown in~\cite{Wood},
does not satisfy the more restrictive condition. In the more restrictive class
one can always assume, up to a change of coordinates on~$\P^1$, that a Belyi-extending map
sends the points $0$, $1$, $\infty$ to themselves. In each isomorphism class there is a unique
representative with this property. This assumption is convenient when one
considers dynamical properties under compositions, see for instance~\cite{ABEGKM}.
The main construction we discuss works in both settings, but some statements will
depend on choosing the more restrictive class of Belyi-extending maps, as we will see
in the following subsections. We will state explicitly when this choice is needed.

Note that, in principle, one could also consider a larger semigroup $\cE_{\bar\Q}$
consisting of all Belyi maps of genus zero mapping $\{0,1,\infty \}$ into itself. The
construction of the crossed product system we outline below would also work in
this case, but this choice would have the inconvenient property that the absolute
Galois group ${\rm Gal}\big(\bar \Q/\Q\big)$ action would involve both the part
$\cH_\cD$ of the algebra and the semigroup $\cE_{\bar\Q}$,
unlike the Bost--Connes case. However, we will include the case of
$\cE_{\bar\Q}$ and subsemigroups in our discussion, because it will
be useful in presenting in an explicit way a method of removal of certain catastrophic
everywhere-divergence problems that can occur in the partition function
of the quantum statistical mechanical system. We will discuss this in
Section~\ref{RegSysSec} below.

\begin{defn}\label{EetaBeta}Given an element $\eta \in \cE$ we denote by
\begin{gather*}
\cE_\eta=\{ h\in \cE \,|\, \exists\, h'\in \cE, \, h=h' \circ \eta \},
\end{gather*}
the range of precomposition by $\eta$. We also denote by $\cB_\eta$
the subset of Belyi maps that are in the range of composition with a given Belyi-extending map
$\eta\in \cE$, that is,
\begin{gather*}
\cB_\eta=\big\{ f \colon \Sigma \to \P^1 \text{ Belyi }\,|\, \exists\, f'\colon \Sigma \to \P^1, \text{ Belyi}\colon f=\eta\circ f' \big\}.
\end{gather*}
\end{defn}

We consider the Hilbert space $\ell^2(\cE)$ with the standard orthonormal basis $\{ \epsilon_\eta \}$
for $\eta\in \cE$ ranging over the Belyi extending maps. The semigroup $\cE$ acts on $\ell^2(\cE)$
by precomposition, $\mu_\eta \epsilon_{\eta'}=\epsilon_{\eta'\circ \eta}$. With the product in $\cE$
defined as above, this gives a semigroup homomorphism $\mu\colon \cE \to \cB(\ell^2(\cE))$.
We denote by $\Pi_\eta$ the orthogonal projection of $\ell^2(\cE)$ onto the subspace generated
by the elements of $\cE_\eta$.

\subsection{Developing maps}

We review here some general facts about orbifold uniformization and developing maps and apply them to
the studies of multivalued inverses of Belyi maps.

In general (see~\cite{Yos}), an orbifold datum $(X,D)$ consists of a complex manifold $X$, a
divisor $D=\sum_i m_i Y_i$ on $X$ with integer multiplicities $m_i>2$, and hypersurfaces $Y_i$
such that near every point of $X$ there is a neighborhood $\cU$ with a branched
cover that ramifies along the loci $\cU\cap Y_i$ with branching indices $m_i$.

A complex manifold $\tilde X$ with a branched covering map $f\colon \tilde X \to X$ that
ramifies exactly along $Y=\cup_i Y_i$ with indices $m_i$ is called an {\em uniformization of
the orbifold datum~$(X,D)$.}

One might consider the multivalued inverse $\phi\colon X\to \tilde X$ of the
uniformization map $f\colon \tilde X \to X$. This is usually referred to
as the {\em developing map} in the cases where $\tilde X$ is simply connected, though
we will be using the term ``developing map'' more generally here for multivalued inverse maps.
Some concrete developing maps are related to certain classes of differential equations, such
as hypergeometric, Lam\'e, Heun, etc.
In particular, the Belyi maps $f\colon \Sigma \to \P^1$ can be
seen as special cases of uniformization of orbifold data $\big(\P^1,D\big)$ with $D$
supported on $\{0,1,\infty\}$, and we can consider the associated multivalued
developing map $\phi=\phi_f$.

In the case where $\tilde X=\H$ is the upper half-plane and the uniformization map
is the quotient map that realizes an orbifold datum $\big(\P^1,D\big)$ as the quotient of
$\H$ by a Fuchsian group, if we denote by $z$ the coordinate on $\H$ and by $t$ the (affine) coordinate on $\P^1(\C)$, a developing
map $z=\phi(t)$ (period map) is obtained as the ratio $\phi(t)=u_1(t)/u_2(t)$ of two independent non-trivial
solutions of the differential equation (orbifold uniformization equation, see \cite[Proposition~4.2]{Yos})
\begin{gather}\label{orbunif}
 \frac{{\rm d}^2 u}{{\rm d}t^2} + \frac{1}{2} \{ z, t \} u =0,
\end{gather}
where $\{ z, t\}=\{ \phi(t), t \}=\cS(\phi)(t)$ is the Schwarzian derivative
\begin{gather*}
 \cS(\phi)(t) := \left( \frac{\phi^{\prime \prime}}{\phi^\prime} \right)^\prime - \frac{1}{2}
\left( \frac{\phi^{\prime \prime}}{\phi^\prime} \right)^2 = \frac{\phi^{\prime\prime\prime}}{\phi^\prime} -\frac{3}{2}
\left( \frac{\phi^{\prime \prime}}{\phi^\prime} \right)^2.
\end{gather*}

More generally, given a Belyi map $f\colon \Sigma \to \P^1$, we can precompose it with a Fuchsian uniformization $\pi\colon \H \to \H/\Gamma=\Sigma$ and consider the multivalued inverse of the Belyi map $f$ in terms of a developing map for $f\circ \pi$.

Riemann surfaces $\Sigma$ that are obtained from algebraic curves defined over number fields admit
uniformizations $\Sigma=\H/\Gamma$ by Fuchsian groups that are finite index subrgroups of a Fuchsian
triangle group $\Delta$, hence one can view Belyi maps as maps $f\colon \H/\Gamma=\Sigma \to \H/\Delta=\P^1$, see~\cite{CIW}.

In the case of a triangle group $\Delta$ the orbifold uniformization equation
reduces to a hypergeometric equation, hence the resulting $\phi(t)$ can be described in terms of hypergeometric functions.

The classical hypergeometric equation, depending on three parameters $a$, $b$, $c$
\begin{gather}\label{hypergeom}
t(t-1) F^{\prime\prime} + ((a+b-1)t -c) F^\prime+ ab F =0
\end{gather}
has the hypergeometric function
\begin{gather*}
F(a,b,c|t)=\sum \frac{(a)_n (b)_n}{(c)_n} \frac{t^n}{n!}
\end{gather*}
as a solution, with $(x)_n=x(x+1)\cdots (x+n-1)$ the Pochhammer symbol.

Two linearly independent solutions $f_1$, $f_2$, locally defined in sectors near $t=0$,
can be obtained as $\C$-linear combinations of $F(a,b,c|t)$ and $t^{1-c} F(a-c+1,b-c+1,2-c|t)$.
The Schwarz map $\phi(t)=f_1(t)/f_2(t)$ given by the ratio of two independent
solutions maps holomorphically the upper half plane $\H$ to the curvilinear triangle $T$ with
vertices $\phi(0)$, $\phi(1)$, $\phi(\infty)$ and angles $|1-c| \pi$, $|c-a-b| \pi$, and
$|a-b| \pi$. The equation~\eqref{hypergeom} is equivalent to \eqref{orbunif}, for the case of branch
points at $0$, $1$, $\infty$ and these angles. Extending the map by Schwarz reflection, one obtains in this way the developing map, for the uniformization of the orbifold datum $\big(\P^1(\C),D\big)$, with set of orbifold points $\{ 0,1,\infty \}$ and angles as above, with a tessellation of $\H$ (in the hyperbolic case) by copies of $T$ and the group generated by reflections about the edges of~$T$.

In the case where $|1-c|=m_0^{-1}$, $|c-a-b|=m_1^{-1}$, $|a-b|=m_2^{-1}$ are
inverses of positive integers, the Schwarz function $\phi(t)$ can be inverted and
the inverse function $t=t(z)$, the uniformization map, is an automorphic
function for the Fuchsian triangle group $\Delta(m_0,m_1,m_2)$, which is
related to the problem of computing the Hauptmodul for triangle groups~\cite{CIW,DGMS}.
For a Belyi map $f\colon \Sigma=\H/\Gamma \to \P^1=\H/\Delta$ the uniformization
equation can be seen as the pullback of the equation for~$\H/\Delta$.
A formulation of multivalued inverse developing maps of Belyi maps in terms of
log-Riemann surfaces is discussed in~\cite{BisPeMa}.

The properties of developing maps for Belyi maps recalled here give us the following
consequence, which we will be using in our construction of the quantum statistical
mechanical system based on Belyi-extending maps in the next subsections.

\begin{lem}\label{rhoeta}Consider an element $\eta \in \cE$, that is, a Belyi map $\eta\colon \P^1\to \P^1$, defined over~$\Q$, with $\eta(\{ 0,1,\infty \})\subseteq \{ 0,1,\infty \}$. Let $\phi_\eta$ be its developing map as above.

If $\cB_\eta$ is the subset of Belyi maps introduced in Definition~{\rm \ref{EetaBeta}}, then composition $\phi_\eta \circ f$ of any $f\in \cB_\eta$ with the developing map $\phi_\eta$ gives a~map~$f'$ satisfying $\eta \circ f'=f$.

Also, for any $\eta'\in \cE_\eta$, with $\cE_\eta$ as in Definition~{\rm \ref{EetaBeta}}, precomposition of $\eta'$ with $\phi_\eta$ gives a~map~$\eta''$ with $\eta''\circ \eta =\eta'$. We put $\rho_\eta(f):=\phi_\eta \circ f$ for $f\in \cB_\eta$.

For more general Belyi maps~$f$ we define as above the compositions $\eta'\circ \rho_\eta(f)$, for any $\eta'\in \cE_\eta$.
\end{lem}

\begin{proof} Let $\Gamma$ be the uniformizing group of the source orbifold datum
$\big(\P^1, \eta^{-1}(D)\big)$ with $D=\{0,1,\infty\}$ and let $\bar\phi_\eta(t)\colon \big(\P^1,D\big)\to \big(\P^1,\eta^{-1}(D)\big)$, and let $\pi\colon \H \to \H/\Gamma=\P^1$ be the uniformization map for $\big(\P^1, \eta^{-1}(D)\big)$, with $\phi_\pi$ the corresponding developing map.

The multivalued inverse $\phi_\eta$ for the Belyi-extending map $\eta\colon \P^1 \to \P^1$ can then be regarded as the composition $\pi\circ \tilde\phi_\eta$ of the developing map $\tilde\phi_\eta=\phi_\pi \circ \phi_\eta$ for $\eta\circ \pi$ and the uniformization $\pi\colon \H \to \H/\Gamma$ of $(\P^1, \eta^{-1}(D))$. Given $f \in \cB_\eta$, the composition $\phi_\eta \circ f$ is defined and gives
a Belyi map $f'\colon \Sigma \to \P^1$ such that $\eta\circ f'=f$. Thus, the transformation
$\rho_\eta(f)=\phi_\eta \circ f$ is well defined on $f\in \cB_\eta$.

For $\eta'\in \cE_\eta$ similarly we have $\eta'=h'\circ \eta$ for some $h'\in \cE$. Thus, the composition $\eta'\circ \phi_\eta$ is defined and gives an element in $\cE$. Given a Belyi map $f\colon \Sigma \to \P^1$, a Belyi-extending map $\eta$, and a Belyi-extending map $\eta'\in \cE_\eta$, with $\eta'=h'\circ \eta$ for some $h'\in \cE$, we have $\eta'\circ \rho_\eta(f)$ given by the composition $\eta'\circ \phi_\eta\circ f =h'\circ f$.
\end{proof}

\subsection{Belyi-extending semigroup and Belyi functions operators}
Returning to the framework of the end of Section~\ref{HopfGaloisSec}, recall that isometries $\mu_\eta$ of $\ell^2(\cE)$ act upon the basis elements $\epsilon_{\eta'}$ as $\mu_\eta \epsilon_{\eta'}=\epsilon_{\eta'\circ \eta}$. We have $\mu_\eta^* \mu_\eta = {\rm id}$.

Clearly, $e_\eta :=\mu_\eta \mu_\eta^*$ is the projector acting on $\ell^2(\cE)$ as the orthogonal projector $\Pi_\eta$ onto the subspace generated by the basis elements $\epsilon_{\eta'}$ with $\eta'\in \cE_\eta$.

Given a Belyi map $f\colon \Sigma \to \P^1$, let $D=D_f$ be the associated dessin.
We include here the case where $D$ may have multiple connected components.
This case corresponds to branched coverings where~$\Sigma$ can have multiple
components. Given a Belyi-extending map $\beta \in \cE$, we consider the dessin
$\eta(D)$ corresponding to the Belyi map $\eta \circ f$ as in~\cite{Wood}.

\begin{defn}\label{opspiBelyi}Let $\varphi\in \cG_\cD\big(\bar\Q\big)={\rm Hom}_{\rm Alg_\Q}\big(\cH_\cD,\bar\Q\big)$ be a character of the Hopf algebra of dessins. Given a Belyi map $f\colon \Sigma \to \P^1$, we define a linear operator $\pi_\varphi(f)$ by setting
\begin{gather}\label{pifD}
\pi_\varphi (f) \epsilon_\eta = \varphi( \eta(D_f) ) \epsilon_\eta,
\end{gather}
for all $\eta\in \cE$ and with $\eta(D_f)=D_{\eta\circ f}$. For simplicity of notation we shall write $\varphi(\eta\circ f):=\varphi( \eta(D_f) )$ in the above.
\end{defn}

The operator $\pi_\varphi(f)$ is bounded if
\begin{gather}\label{boundedop}
\sup_{\eta\in \cE}|\varphi(\eta(D_f))|\leq C_{\varphi,f},
\end{gather}
for some constant $C_{\varphi,f}>0$.

\subsection{Invariant characters and balanced characters}

We will now discuss simple examples of characters $\varphi$ that satisfy this boundedness
condition.

Let $\iota\colon \bar\Q \hookrightarrow \C$ be a fixed embedding.
Let $\lambda\in \bar\Q$ be an algebraic number of absolute value $|\iota(\lambda)|\leq 1$,
with the property that all its Galois conjugates are also contained in the unit disk. Note that
this condition can be easily achieved, for instance by dividing by a large integer, unlike
the more delicate conditions (such as for Pisot and Salem numbers) where one requires
one of the points in the Galois orbit to remain outside of the unit disk.

\begin{prop}\label{Ginvchar} Let $\lambda\in\bar\Q$ be chosen as above, so that $\iota(\gamma\lambda)$ is
contained in the unit disk for all $\gamma\in G={\rm Gal}\big(\bar\Q/\Q\big)$.
Consider the Galois invariant of dessins given by $\varphi(D)=\iota(\lambda)^{\# E(D)}
=\iota(\lambda)^d$ where~$d$ is the degree of the Belyi map. This determines a character
$\varphi\in \Hom\big(\cH_\cD,\bar\Q\big)$ that descends to a character of the
quotient Hopf algebra $\cH_{\cD,G}$ of Lemma~{\rm \ref{idealHopf}}.

For a Belyi map $f\colon \Sigma \to \P^1$, the operator $\pi_\varphi(f)$
associated to this character satisfies the boundedness condition.
Moreover, for any $\gamma\in G$ the character
$\gamma\circ\varphi \in \Hom(\cH_\cD,\bar\Q)$ given by $\gamma\varphi(D)
=\gamma(\lambda)^{\# E(D)}$ also gives bounded operators $\pi_{\gamma\varphi}(f)$.
\end{prop}

\begin{proof}Consider $\lambda\in\bar\Q$ as above. The map
$\varphi(D) =\iota(\lambda)^{\# E(D)}$ determines an algebra homomorphism
$\varphi\colon \cH_\cD\to \bar\Q$, since for a disjoint union $D=D_1\coprod D_2$,
we have $\# E(D)= \# E(D_1)+ \# E(D_2)$ hence $\varphi(D)=\varphi(D_1)\varphi(D_2)$.
Since the number of edges of a dessin (degree of the Belyi map) is a Galois invariant, the character
descends to the quotient Hopf algebra~$\cH_{\cD,G}$. Given a Belyi function $f\colon \Sigma \to \P^1$,
the operator $\pi_\varphi(f)$ acts on the basis $\epsilon_\eta$ of the Hilbert space~$\ell^2(\cE)$
by $\pi_\varphi(f)\epsilon_\eta = \varphi(\eta(D_f)) \epsilon_\eta$, with $\eta(D_f)=D_{\eta\circ f}$.
We know (see \cite[Proposition~3.5]{Wood}) that if $D_f$ is the dessin corresponding to
a Belyi map $f\colon \Sigma \to \P^1$ and $D_\eta$ is the dessin corresponding to $\eta\colon \P^1\to \P^1$
in $\cE$, then the edges of the dessin $D_{\eta\circ f}$ of the Belyi map $\eta\circ f\colon \Sigma \to \P^1$ are given by $E(\eta(D_f))=E(D_f)\times E(D_\eta)$. Thus, we have
\begin{gather*}
 \varphi(\eta(D_f)) = \iota(\lambda)^{\# E(\eta(D_f))} = \big(\iota(\lambda)^{\# E(D_f)}\big)^{\# E(D_\eta)}.
 \end{gather*}

Under the assumption that $|\iota(\lambda)|\leq 1$, hence $| \iota(\lambda)^{\# E(D_f)} |\leq 1$ we
have \begin{gather*}\big|\big(\iota(\lambda)^{\# E(D_f)}\big)^{\# E(D_\eta)}\big|\leq \big| \iota(\lambda)^{\# E(D_f)} \big|\leq 1\end{gather*} for all
$\eta\in \cE$, hence $\|\pi_\varphi(f)\|\leq 1$. Since we assume that all Galois conjugates
are also in the unit disk, $|\iota(\gamma(\lambda))|\leq 1$ for $\gamma\in G$, the same holds
for the operators $\pi_{\gamma\varphi}(f)$.
\end{proof}

In Proposition~\ref{Ginvchar}, we constructed a character $\varphi \in \Hom_{{\rm Alg}_\Q}\big(\cH_\cD,\bar\Q\big)$ that is invariant with respect to the $G$-action by Hopf algebra homomorphisms of $\cH_\cD$ (by automorphisms of the dual affine group scheme $\cG_\cD$) and descends to a character of the quotient Hopf algebra $\cH_{\cD,G}$. We now consider a setting that is more interesting for the purpose of obtaining the correct intertwining of symmetries and Galois action on the zero-temperature
KMS states, extending the similar properties of the Bost--Connes system. To this purpose we focus on
characters of the Hopf algebra $\cH_\cD$ that are not $G$-invariant, but that satisfy the expected
$G$-equivariance condition intertwining the $G$-action on the source $\cH_\cD$ with the $G$-action on the
target $\bar\Q$.

\begin{defn}\label{balchardef}A character $\varphi\in \Hom_{{\rm Alg}_\Q}\big(\cH_\cD,\bar\Q\big)$ is called {\em balanced} if it satisfies the identity
\begin{gather}\label{balchar}
\varphi(\gamma D) = \gamma \varphi(D),
\end{gather}
for all $\gamma\in G={\rm Gal}\big(\bar\Q/\Q\big)$ and all dessins~$D$, where $D\mapsto \gamma D$ is the $G$-action on dessins and $\varphi(D)\mapsto \gamma \varphi(D)$ is the Galois action on $\bar\Q$.
\end{defn}

The following example shows that the set of balanced characters is non-empty, although
the example constructed here is not computationally feasible, since it assumes an a priori
choice of a~set of representatives of the $G$-orbits on the set $\cD$ of dessins, which
in itself requires an explicit knowledge of these orbits. The question of constructing
an explicit map $D\mapsto \varphi(D)$ of dessins to $\bar\Q$ that intertwines the
${\rm Gal}\big(\bar\Q/\Q\big)$ actions remains a very interesting problem.

\begin{lem}\label{balcharlem}Let $\cR=\{ D \}$ be a set of representatives of the $G$-orbits on the set of dessins. Let~$\{ \lambda_D \}$ be a set of algebraic numbers with $|\iota(\gamma\lambda_D)|\leq 1$
for all $\gamma\in G$ and $\deg(\lambda_D)=\# {\rm Orb}(D)$. Then the formula
\begin{gather*}
\varphi(\gamma D)=(\gamma\lambda_D)^{\# E(D)}
\end{gather*}
determines a balanced character of $\cH_\cD$, with the property that the linear operators $\pi_\varphi(\gamma f)$ on~$\ell^2(\cE)$ are bounded for any Belyi function~$f$ and all $\gamma\in G$.
\end{lem}

\begin{proof} Let $d_D=\# {\rm Orb}(D)$ be the length of the corresponding orbit. By the orbit-stabilizer theorem this is also the cardinality of the set of cosets $d_D=\# G/{\rm Stab}(D)$.
For each $D\in \cR$, choose an algebraic number $\lambda_D\in \bar\Q$ with the property
that $\# {\rm Orb}(\lambda_D) = \deg(\lambda_D)=[\Q(\lambda_D):\Q]$ satisfies
$\deg(\lambda_D)=d_D$. We then set
\begin{gather*} \varphi(\gamma D) = \gamma(\lambda_D)^{\# E(D)}. \end{gather*}
Since $\# {\rm Orb}(D) =\# {\rm Orb}(\lambda_D)$ and we can identify the action of $G$
on ${\rm Orb}(D)$ with the left multiplication on the cosets $G/{\rm Stab}(D)$, the
map $\varphi$ bijectively maps ${\rm Orb}(D)$ to ${\rm Orb}(\lambda_D)$ in such a way
that by construction $\varphi(\gamma D)= (\gamma(\lambda_D))^{\# E(D)} =
\gamma\big(\lambda_D^{\# E(D)}\big)$, hence $\varphi$ is a balanced character.

Possibly after dividing by a sufficiently large integer, we can assume
that $\lambda_D$ and all its Galois conjugates lie inside the unit disk, so that the sequence
$\big\{ \gamma(\lambda_D)^n \big\}_{n\in \N}$ is bounded in the
$\ell^\infty$-norm, for all $\gamma\in G$.
Consider then the operators $\pi_\varphi(f)$ associated to the character $\varphi$,
for Belyi maps $f$. These act on $\ell^2(\cE)$ by
$\pi_\varphi(f)\epsilon_\eta = \varphi(\eta(D)) \epsilon_\eta$, where $D=D_f$ is the
dessin associated to the Belyi map.

Since all the Belyi-extending maps $\eta\in \cE$ are defined over $\Q$, we have $\gamma \eta(D)=\eta (\gamma D)$ for all $\gamma\in G$ and $\eta {\rm Orb}(D)={\rm Orb}(\eta(D))$. If we take
$\eta(D)$ as the representative for ${\rm Orb}(\eta(D))$, we can write
\begin{gather*} \pi_\varphi(f)\epsilon_\eta = \lambda_{\eta(D)}^{\# E(\eta(D))} \epsilon_\eta. \end{gather*}
Since the algebraic numbers $\gamma \lambda_{\eta(D)}$ lie in the unit disk,
for all $\gamma \in G$, all $D\in \cD$ and all $\eta\in \cE$, the operators $\pi_\varphi(\gamma f)$
are all bounded.
\end{proof}

\subsection{Quantum statistical mechanics of Belyi-extending maps}\label{BelyiQSMsec}

We will now proceed to the construction of a quantum statistical mechanical
system associated to dessins with the $G$-action and Belyi-extending maps.

\begin{lem}\label{sigmarhoeta}Let $f\colon \Sigma\to \P^1$ a Belyi map and $\eta\in \cE$ a Belyi-extending map. Denote by $\sigma_\eta(f)=\eta\circ f$ the action of the semigroup $\cE$ on Belyi maps by composition.

For $\rho_\eta$ defined as in Lemma~{\rm \ref{rhoeta}}, we have $\rho_\eta (\sigma_\eta(f)) =f$ for all Belyi maps $f$ and $\sigma_\eta(\rho_\eta(f))\allowbreak =f$ for $f\in \cB_\eta$.

Let $\pi_\varphi (f)$ be as in~\eqref{pifD} satisfying the boundedness condition~\eqref{boundedop}. For
any Belyi map~$f$, the operator $\pi_\varphi(\rho_\eta(f))$ acts on the range of the
projection $e_\eta$ in $\ell^2(\cE)$ as
\begin{gather*}
\pi_\varphi(\rho_\eta(f)) e_\eta \epsilon_{\eta'}=
\begin{cases}
\varphi( \eta' \circ \phi_\eta \circ f ) \epsilon_{\eta'}, & \eta'\in \cE_\eta, \\ 0, & \text{otherwise}.
\end{cases}
\end{gather*}
The operators $\pi_\varphi (f)$ of \eqref{pifD} satisfy the relations
\begin{gather*}
\mu_\eta^* \pi_\varphi (f) \mu_\eta = \pi_\varphi(\sigma_\eta(f)) \qquad \text{and} \qquad
\mu_\eta \pi_\varphi (f) \mu_\eta^* = \pi_\varphi(\rho_\eta(f)) e_\eta .
\end{gather*}
\end{lem}

\begin{defn}\label{crossdessins}Let $\cH_\cD$ be the Hopf algebra of dessins, seen as a commutative algebra over $\Q$. Denote by $\cA_{\cH_\cD, \cE}$ the non-commutative algebra over~$\Q$ generated by~$\cH_D$ as a commutative algebra (that is, by the dessins~$D$, or equivalently by the Belyi functions~$f$) and by the isometries~$\mu_\eta$, with the relations $\mu_\eta^*\mu_\eta=1$, $\mu_\eta^* f \mu_\eta = \sigma_\eta(f)$
and $\mu_\eta f \mu_\eta^*=\rho_\eta(f) e_\eta$ with $e_\eta=\mu_\eta\mu_\eta^*$.
\end{defn}

We will construct a time evolution on the algebra $\cA_{\cH_\cD, \cE}$ using invariants
of Belyi-extending maps that behave multiplicatively under composition. The following
result follows directly from the construction of the algebra $\cA_{\cH_\cD, \cE}$ in
Definition~\ref{crossdessins}.

\begin{lem}\label{Etimeev}Let $\Upsilon\colon \cE \to \N$ be a semigroup homomorphism. Then setting $\sigma_t(\mu_\eta)=\Upsilon(\eta)^{{\rm i}t} \mu_\eta$ and $\sigma_t(D)=D$ for all $\eta\in \cE$ and all $D\in \cD$ defines a time evolution
$\sigma\colon \R \to {\rm Aut}(\cA_{\cH_\cD, \cE})$. Given a character $\varphi\in
\Hom_{{\rm Alg}_\Q}\big(\cH_\cD,\bar\Q\big)$ such that the operators $\pi_\varphi(f)$ are bounded for all
Belyi maps~$f$, the time evolution $\sigma_t$ is implemented in the resulting representation of
$\cA_{\cH_\cD, \cE}$ on $\ell^2(\cE)$ by the Hamiltonian $H\epsilon_\eta = \log \Upsilon(\eta) \epsilon_\eta$, with partition function given by the (formal) Dirichlet series
\begin{gather}\label{ZetaE}
 Z(\beta)= \Tr\big({\rm e}^{-\beta H}\big)= \sum_{\eta \in \cE} \Upsilon(\eta)^{-\beta} =\sum_{n\geq 1}
\#\{ \eta\in \cE\,|\, \Upsilon(\eta)=n \} n^{-\beta}.
\end{gather}
\end{lem}

\begin{proof} The $\sigma_t$ defined as above gives a time evolution on $\cA_{\cH_\cD, \cE}$
because of the multiplicative property of~$\Upsilon$ under composition in $\cE$. A character $\varphi$ with the boundedness condition on the operators $\pi_\varphi(f)$ determines a representations $\pi_\varphi$ of the algebra $\cA_{\cH_\cD, \cE}$
by bounded operators on the Hilbert space $\ell^2(\cE)$, where the generators $D$ act as
$\pi_\varphi(f)$ with $f$ the Belyi map associated to the dessin $D$ and the isometries
$\mu_\eta$ act as shifts on the basis, $\mu_\eta \epsilon_{\eta'}=\epsilon_{\eta'\circ\eta}$.
In this representation we see directly that the time evolution is implemented by the Hamiltonian
$H$ with the partition function given by the formal Dirichlet series \eqref{ZetaE}.
\end{proof}

\begin{rem}\label{subsemi}Note that for any given choice of a subsemigroup $\cE'\subset \cE$ of the semigroup of Belyi-expanding maps, one can adapt the construction above and obtain an
algebra $\cA_{\cH_\cD,\cE'}$ generated by $\cH_\cD$ and the isometries $\mu_\eta$ with
$\eta\in \cE'$ and its representation on $\ell^2(\cE')$. A semigroup homomorphism
$\Upsilon\colon \cE'\to \N$ then determines a time evolution with partition function as in~\eqref{ZetaE} with the summation over $\eta\in \cE'$.
\end{rem}

The reason for considering such restrictions to subsemigroups $\cE'\subset \cE$ of the system $(\cA_{\cH_\cD,\cE},\sigma_t)$ is this: we will be able to obtain from the system of Definition~\ref{crossdessins} and Lemma~\ref{Etimeev} a~speciali\-zation that recovers the original Bost--Connes system.

\begin{prop}\label{BCfromE}Let $\cE_{BC}\subset \cE$ be the subsemigroup of Belyi-extending maps $\eta$
such that $\eta\colon 0 \mapsto 0$ and the ramification is maximal at $0$, that
is, $m=1$ and $n=d$. Consider the subalgebra of $\cA_{\cH_\cD,\cE}$ obtained by restricting
the semigroup to $\cE_{BC}$ and restricting the Belyi functions $f\colon \Sigma \to \P^1$
in $\cH_\cD$ to maps ramified only at $0$ and $\infty$.

Let $\Upsilon\colon \cE\to \N$ be the semigroup homomorphism given by the degree $d$ of the Belyi-expanding map, and let $\sigma_t$ be the associated time evolution. There is a choice of a balanced character $\varphi\colon \cH_\cD \to \bar\Q$ such that the restriction of the resulting time evolution to this subalgebra is the original Bost--Connes quantum statistical mechanical system.
\end{prop}

\begin{proof} Choose a balanced character $\varphi\colon \cH_\cD \to \Q^{ab}\subset \bar\Q$ constructed as in Proposition~\ref{balcharlem}, with $\lambda_D^{\#E(D)}=\zeta_d$ given by a primitive root of unity of order $d=\# E(D)$, when $D$ is the dessin of a Belyi function ramified only at $0$ and $\infty$.

These Belyi functions have $g(\Sigma)=0$ and up to a change of coordinates on $\P^1$ we
can assume they map $0$, $1$, $\infty$ to themselves. By the Riemann--Hurwitz formula, the ramification
points~$0$ and~$\infty$ have maximal ramification, $f^{-1}(0)=\{ 0 \}$ and $f^{-1}(\infty)=\{ \infty \}$, so $m=1$ while $n=\# f^{-1}(1)=d$. Similarly, for a Belyi-extending map with $m=\#\eta^{-1}(0)=1$ so that
$\eta^{-1}(1)=\deg(\eta)=d$. Thus, when restricting the system to this subset of Belyi maps and the sub\-group~$\cE_{BC}$ of Belyi-extending maps, the multiplicity of $\Upsilon(\eta)=d$ is one, since for
these maps the dessin~$D$ or~$D_\eta$ is the unique bipartite tree with
a single white colored vertex and~$d$ black colored vertices attached to the white vertex.

We can then identify the basis $\epsilon_\eta$ of $\ell^2(\cE)$ with the basis $\epsilon_d$
of~$\ell^2(\N)$ on which the operators associated to these Belyi maps act as
$\pi_\varphi(f) \epsilon_d = \zeta_{\deg(f)}^d \epsilon_d$ and the isometries $\mu_\eta$
act as the isometries $\mu_{\deg(\eta)}\epsilon_d =\epsilon_{d\deg(\eta)}$ of the Bost--Connes
system, satisfying the relations of the Bost--Connes algebra, after identifying
$\zeta_d=\alpha(e(1/d))$ for $\{ e(r) \}_{r\in \Q/\Z}$ the generators of the Bost--Connes algebra
and $\alpha\in \hat\Z^*=\Aut(\Q/\Z)$ a chosen embedding of~$\Q/\Z$ in~$\C$.
\end{proof}

\looseness=-1 In the statement of Lemma~\ref{Etimeev} above we have regarded the partition function purely as a formal Dirichlet series. In order to proceed to the consideration of Gibbs states and limiting zero-temperature states, however, we need a setting where this Dirichlet series converges for
sufficiently large $\beta >0$. As we discuss in detail in the next subsection, due to the
particular nature of semigroup homomorphisms $\Upsilon\colon \cE\to \N$ from the
semigroup of Belyi-expanding maps to the integers, the Dirichlet series~\eqref{ZetaE}
tends to be everywhere divergent, except in the case where the system is
constructed using the subsemigroup $\cE_{BC}$ of $\cE$ that recovers the
Bost--Connes system.

After discussing semigroup homomorphisms from the semigroup of Belyi-expanding maps, we return to the construction of the quantum statistical mechanical system and we show how Definition~\ref{crossdessins}
and Proposition~\ref{Etimeev} can be modified to avoid this divergence problem.

\subsection{Semigroup homomorphisms of Belyi-extending maps}\label{SemiHomSec}

We discuss here semigroup homomorphisms with source the semigroup of
Belyi-extending maps with the operation of composition. In particular, we calculate the
behaviour of the multiplicities associated to these semigroup homomorphisms,
namely the size of the level sets.

As a simple illustration of the most severe type of divergence one
expects to encounter in the partition functions, we consider first the case of the larger
semigroup $\cE_{\bar\Q}$ of Belyi maps of genus zero mapping $\{0,1,\infty\}$ into
itself, and the subsemigroup $\cE_{\bar\Q,0}$ of Belyi polynomials, which have
bipartite planar trees as dessins.

\begin{lem}\label{deghom}The map $\Upsilon\colon \cE_{\bar\Q} \to \N$ given by the degree $\Upsilon(\eta)=\deg(\eta)$
is a semigroup homomorphism. The level sets of its restriction to $\cE_{\bar\Q,0}$ are given by{\samepage
\begin{gather*}
\cE_{\Upsilon,d}:= \{ \eta\in \cE\,|\, \deg(\eta)=d \} = \{ \text{bipartite trees } T\,|\, \#E(T)=d \},
\\
 \# \cE_{\Upsilon,d} = 2 (d+1)^{d-1}. \nonumber\end{gather*}}
\end{lem}

\begin{proof} The degree of a Belyi map $\eta\in \cE$ is $\deg(\eta)=\# E(D_\eta)$,
the number of edges of its dessin. For Belyi maps $\eta\colon \P^1\to \P^1$ in $\cE_{\bar\Q,0}$ the
dessin $D_\eta$ is a bipartite tree. Since the bipartite structure on a tree is uniquely
determined by assigning the color of a single vertex, the number of elements in $\cE_{\Upsilon,d}$
can be identified with twice the number of trees on $d$ edges, which is equal to $(d+1)^{d-1}$.
\end{proof}

\begin{rem}\label{Zdeg}For $\Upsilon\colon \cE_{\bar\Q,0} \to \N$ given by the degree $\Upsilon(\eta)=\deg(\eta)$, the formal Dirichlet series~\eqref{ZetaE}
\begin{gather*}
2 \sum_{d\geq 1} (d+1)^{d-1} d^{-\beta}
\end{gather*}
is everywhere divergent.
\end{rem}

When we restrict ourselves to the subsemigroup $\cE \subset \cE_{\bar\Q}$ of Belyi-extending maps
and to the corresponding $\cE_0\subset \cE_{\bar\Q,0}$, the multiplicities decrease, since
many bipartite trees do not correspond to Belyi maps defined over $\Q$. One can see
that there are choices of subsemigroups $\cE'$ of the semigroup $\cE$ of Belyi-extending maps
for which the partition function does not have this dramatic everywhere-divergence problem.
We give an example of such a subsemigroup in Lemma~\ref{onecircuit} below. In such cases,
we can just use the quantum statistical mechanical system constructed in Section~\ref{BelyiQSMsec}
above as the correct generalization of the Bost--Connes system. However, subsemigroups
of $\cE$ with this property tend to be too small to achieve separation of Galois orbits. For this
reason, we want to allow larger semigroups of Belyi maps and we need to introduce an
appropriate method that will cure possible everywhere-divergence phenomena in the
partition function.

In the example above we have considered the degree homomorphism.
One can obtain other semigroup homomorphisms $\Upsilon\colon \cE_{\bar\Q} \to \N$ (with restrictions
$\Upsilon\colon \cE \to \N$) by composing
the degree homomorphism with an arbitrary semigroup homomorphism $\Psi\colon \N \to \N$.
Since $\N$ is the free abelian semigroup generated by primes, a homomorphism
$\Phi$ can be obtained by assigning to each prime $p$ a number $\Psi(p)\in \N$. It is easy
to check, however, that any homomorphism $\Psi\circ \Upsilon$ obtained in this way
will not compensate for the everywhere divergence problem of Remark~\ref{Zdeg}. This
leads to the natural question of whether there are other interesting semigroup
homomorphism with source $\cE$ that do not factor through the degree map.
If one restricts to the subsemigroup $\cE_{\bar\Q,0}$ as in Lemma~\ref{deghom},
for which the dessins are trees, then the length of the unique path in the tree
from~$0$ to~$1$ is an example of such a morphism. We focus here below on
a semigroup homomorphism to a non-abelian semigroup of matrices.

In particular, we will be observing more closely how other combinatorial data of dessins
behave under the composition of maps, such as the number of W/B vertices in the
bipartition (the ramification indices $m$ and $n$ of the map at the points~$0$ and~$1$).
This will identify some interesting (non-abelian) semigroup laws, and resulting
semigroup homomorphisms.

\begin{prop}\label{Mat2}
Consider the subsemigroup $\cE'_{\bar\Q}$ of Belyi maps of genus zero mapping
the points $0$, $1$, $\infty$ to themselves.
The map $\Upsilon\colon \cE'_{\bar\Q} \to M_2^+(\Z)$ given by
\begin{gather}\label{Mat2hom}
\Upsilon(\eta) = \begin{pmatrix} d & m-1 \\ 0 & 1
\end{pmatrix},
\end{gather}
where $d=\deg(\eta)$ and $m=\# \eta^{-1}(0)$, is a semigroup homomorphism.
The same holds for the map $\Upsilon\colon \cE'_{\bar\Q} \to M_2^+(\Z)$ as above, with $m$
replaced by $n=\eta^{-1}(1)$ and for the map $\Upsilon\colon \cE'_{\bar\Q} \to M_3^+(\Z)$
\begin{gather*} \Upsilon(\eta) = \begin{pmatrix} d & m-1 & n-1 \\ 0 & 1 & 0 \\
0 & 0 & 1\end{pmatrix} . \end{gather*}
The level sets of the map \eqref{Mat2hom} restricted to the subsemigroup $\cE_{\bar\Q,0}$
of tree dessins are given by
\begin{gather} \cE'_{\Upsilon, d,m} :=\left\{ \eta\in \cE_{\bar\Q,0} \,|\, \Upsilon(\eta)=\begin{pmatrix} d & m-1 \\ 0 & 1
\end{pmatrix} \right\} = \left\{ \text{bipartite trees } T \,|\, \begin{array}{l} \# E(T)=d, \\ \# V_0(T)=m \end{array} \right\},\nonumber\\
 \label{dmtrees}
\# \cE'_{\Upsilon, d,m} = m^{d+1-m} (d+1-m)^{m-1}.
\end{gather}
\end{prop}

\begin{proof} We will discuss the map \eqref{Mat2hom}: the other cases are analogous.
In restricting to the subsemigroup $\cE'_{\bar\Q}$ of $\cE_{\bar\Q}$ we are
considering the more restrictive condition on the Belyi maps, that map the
set $\{0,1,\infty\}$ to itself rather than into itself, and after a change of coordinates
on~$\P^1$, we are assuming that the points $0$, $1$, $\infty$ are mapped
to themselves by these Belyi maps.

For a composition $\eta'\circ \eta$ of two such Belyi maps,
the preimage $(\eta'\circ \eta)^{-1}(0)$ consists of the preimage $(\eta')^{-1}(0)$ of
the point $0$ in the preimage $\eta^{-1}(0)$ and of the preimages $(\eta')^{-1}(u)$
for each of the remaining $m-1$ points $u \in \eta^{-1}(0)$. The first set
$(\eta')^{-1}(0)$ consists of $m'$ points while the second set consists of
$d' (m-1)$ points. Thus, we have $\# (\eta'\circ \eta)^{-1}(0) = d' (m-1) + m'$.
It remains to check that the product in the semigroup $M_2^+(\Z)$
\begin{gather*}
\begin{pmatrix} d' & m'-1 \\ 0 & 1
\end{pmatrix} \cdot \begin{pmatrix} d & m-1 \\ 0 & 1
\end{pmatrix} = \begin{pmatrix} d' d & d'(m-1) + m'-1 \\ 0 & 1
\end{pmatrix}
\end{gather*}
is equal to $\Upsilon(\eta'\circ \eta)$. When restricting to the subsemigroup $\cE_{\bar\Q,0}$,
the counting of the size of the level sets is based on the number of bipartite trees with $m$
white and $n$ black vertices, which is equal to $m^{n-1} n^{m-1}$ and the fact that the
total number of vertices is $m+n=d+1$.
\end{proof}

The semigroup homomorphism \eqref{Mat2hom} therefore captures the
semigroup law satisfied by the ramification indices $m$ and $n$. Note
that the determinant homomorphism of semigroups $\det\colon M_2^+(\Z) \to \N$
produces the semigroup homomorphism given by the degree
\begin{gather*}
\det \circ \Upsilon(\eta)=\deg(\eta).
\end{gather*}

We can use the semigroup homomorphism constructed above to
identify choices of sufficiently small subsemigroups $\cE'$ of
$\cE$ that do not have everywhere-divergence phenomena in the
partition function. The following shows that the genus zero single cycle
normalized Belyi maps considered in~\cite{ABEGKM} provide an
example of such a semigroup.

\begin{lem}\label{onecircuit} Consider genus zero Belyi maps $\eta\colon \P^1 \to \P^1$, normalized so
that they fix the points $0$, $1$, $\infty$,
such that the corresponding dessin $D_\eta$ has a single cycle.
They form a subsemigroup $\cE' \subset \cE$ of the semigroup
of Belyi-extending maps with the property that the partition function~\eqref{ZetaE}
\begin{gather*} Z(\beta)= \sum_{\eta \in \cE'} \Upsilon(\eta)^{-\beta} \end{gather*}
of the associated quantum statistical mechanical system for the
degree homomophism $\Upsilon(\eta)=\deg(\eta)$ is convergent for $\beta>2$.
\end{lem}

\begin{proof} The normalized genus zero single cycle Belyi maps are a particular case of the
more general class of conservative (or critically fixed) rational maps~\cite{Cord}, where
one assumes that the critical points are also fixed points. The genus zero and single
cycle condition correspond to requiring that the ramification indices $m$, $n$, $r$ at $0$, $1$, $\infty$
satisfy $2d+1 =m+n+r$. One can see that these maps form a semigroup using the
semigroup law of Proposition~\ref{Mat2} for the ramification indices. Indeed, for a
composition $\eta\circ \eta'$ of two such maps we have
$2 dd'+1= d(m'-1)+m + d(n'-1)+n+d(r'-1)+r=2 dd' +d -3d +2d+1$. As observed in~\cite{ABEGKM}
it is known that all the normalized genus zero single cycle Belyi maps are defined over $\Q$,
so this is a subsemigroup of the semigroup $\cE$ of Belyi-extending maps. The number of
normalized genus zero single cycle Belyi maps of a given degree $d$ is computed in
 \cite[Corollary~2.8 and Remark~2.9]{ABEGKM} and is of the form
\begin{gather*} N(d)=\frac{1}{12} \big(d^2 +4d -c\big), \end{gather*}
where the constant $c$ takes one of the following values
\begin{gather*} c= \begin{cases} 5, & d\equiv 1 \mod 6, \\
8, & d\equiv 4 \mod 6, \\
9, & d\equiv 3,5 \mod 6 ,\\
12, & d\equiv 0,1 \mod 6.
\end{cases} \end{gather*}
Thus, the partition function \eqref{ZetaE} for the semigroup $\cE'$ for the
degree homomorphism $\Upsilon(\eta)=\deg(\eta)$
\begin{gather*} Z(\beta)= \sum_{\eta \in \cE'} \Upsilon(\eta)^{-\beta} =\sum_{d\geq 1}
\#\{ \eta\in \cE'\,|\, \deg(\eta)=d \} d^{-\beta} =\sum_{d\geq 1} N(d) d^{-\beta} \end{gather*}
is convergent for $\beta>2$.
\end{proof}

As in \cite{Wood}, given a dessin $D$ with Belyi function $f\colon \Sigma \to \P^1$
and a Belyi-extending map $\eta\in\cE$, we denote by $\eta(D)$ the dessin
of the composite function $\eta\circ f$.

\begin{cor}\label{KMSconv}Let $\cE'\subset \cE$ be a subsemigroup of Belyi-extending maps for which
the partition function $Z(\beta)= \sum_{\eta \in \cE'} \Upsilon(\eta)^{-\beta}$ is
convergent for sufficiently large $\beta$. Consider the quantum statistical
mechanical system of Section~{\rm \ref{BelyiQSMsec}} with
$\varphi\in \Hom_{{\rm Alg}_\Q}\big(\cH_\cD,\bar\Q\big)$ a~character
satisfying the boundedness condition. Then all the invariants~$\varphi(\eta(D))$,
for $\eta\in \cE'$ and dessins~$D$, occur as values of zero-temperature KMS states.
\end{cor}

\begin{proof}If $\cE'$ is a subsemigroup of Belyi-extending maps with partition function
that converges for large $\beta$, as in the case of Lemma~\ref{onecircuit}, then
the quantum statistical mechanical system of Section~\ref{BelyiQSMsec} has low temperature KMS
states of the form
\begin{gather*} \psi_{\beta,\varphi}(X) = Z(\beta)^{-1} \sum_{\eta\in \cE'} \big\langle \epsilon_\eta, X {\rm e}^{-\beta H} \epsilon_\eta \big\rangle, \end{gather*}
for elements $X$ of the crossed product algebra $\cA_{\cH_\cD,\cE'}$ and in particular
\begin{gather*} \psi_{\beta,\varphi}(D) = Z(\beta)^{-1} \sum_{\eta\in \cE'} \big\langle \epsilon_\eta, \pi_\varphi(f) {\rm e}^{-\beta H}
\epsilon_\eta \big\rangle = Z(\beta)^{-1} \sum_{\eta\in \cE'} \varphi(\eta(D)) \Upsilon(\eta)^{-\beta}, \end{gather*}
where $f$ is the Belyi map with dessin $D$ and $\varphi\in \Hom_{{\rm Alg}_\Q}\big(\cH_\cD,\bar\Q\big)$
is a character satisfying the boundedness condition. The zero-temperature KMS states are then given by
\begin{gather*} \psi_{\infty,\varphi}(X) = \lim_{\beta \to \infty} \psi_{\beta,\varphi}(X) =\varphi(X) \end{gather*}
for $X\in \cA_{\cH_\cD,\cE'}$. In particular, we consider elements of the form
$X=\mu_\eta^* \pi_\varphi(f) \mu_\eta$ for which we have
\begin{gather*} \psi_{\infty,\varphi}(\mu_\eta^* \pi_\varphi(f) \mu_\eta)=\langle \epsilon_{\rm id}, \pi_\varphi(f)
\epsilon_{\rm id} \rangle =\varphi(\eta(D)). \tag*{\qed}
\end{gather*}\renewcommand{\qed}{}
\end{proof}

In \cite{Wood} the Belyi-extending maps are used to construct new Galois invariants of dessins,
in the form of invariants of the form $\varphi(\eta(D))$, where $D$ is a given dessin, $\varphi$ is
a Galois invariant, and $\eta\in \cE$ ranges over the Belyi-extending maps. In our setting, these
invariants occur as values of zero-temperature KMS states. However,
if the subsemigroup $\cE'$ is too small (as in the case of Lemma~\ref{onecircuit}) one
does not expect that the invariants $\varphi(\eta(D))$ would have good properties with
respect to separating different Galois orbits of dessins. Thus, it is preferable to develop
a way to extend the construction of Section~\ref{BelyiQSMsec} to obtain a
quantum statistical mechanical system that can be used in cases of larger semigroups
of Belyi-extending maps, for which the partition function \eqref{ZetaE} may have the
type of everywhere-divergence problem encountered in the case of Remark~\ref{Zdeg}.

This observation was suggested to us by Lieven Le Bruyn:
examples include the case where one restricts to the trees of dynamical Belyi polynomials. This
semigroup contains several free subsemigroups, such as all Belyi polynomials of fixed degree $d > 2$ that
form a free semigroup. It is possible to take $1$ as a leaf-vertex, and consider the subsemigroup
of such dynamical Belyi polynomials having the same Julia set. (If two polynomials have different
Julia sets, then their forward orbit is dense in the plane by \cite{Stank}.)
Then this subsemigroup acts on the
inverse images of $1$ like the action of the power maps on the roots of unity. However, even this
subsemigroup is likely to be too large.

To the purpose of analyzing how to treat the everywhere divergent cases, we return to a consideration of the model
case of the semigroup $\cE_{\bar\Q,0}$ of Belyi maps with composition considered in Lemma~\ref{deghom}
and Remark~\ref{Zdeg}, for which we know that everywhere-divergence occurs. This semigroup
can be equivalently seen as a semigroup of bipartite trees with a product
operation that reflects the composition of maps, see \cite{AdrZvo}.

\subsection{Extended system, partial isometries, and partition function}\label{RegSysSec}

In this subsection we present a method for curing the type of everywhere-divergence
problems in the partition function \eqref{ZetaE} that occur in the example of the
semigroup $\cE_{\bar\Q,0}$ in Remark~\ref{Zdeg}. For the purpose of clarity,
we illustrate how the method works in the case of this semigroup.
A~similar method, mutatis mutandis, can be applied to other semigroups with
similar divergence phenomena in the partition function.

We use the semigroup homomorphism~\eqref{Mat2hom} in order to modify the
quantum statistical mechanical system of Proposition~\ref{Etimeev} in such
a way that the Dirichlet series of the resulting partition function becomes convergent
for large $\beta>0$.

In the process, we will have to slightly modify our algebra:
 the isometries $\mu_\eta$ with $\mu_\eta^* \mu_\eta=1$
and $\mu_\eta \mu_\eta^* =e_\eta$ will be replaced by partial isometries (which we will still
call $\mu_\eta$) with $\mu_\eta \mu_\eta^* =e_\eta$ and $\mu_\eta^* \mu_\eta=\tilde e_\eta$
where both $e_\eta$ and $\tilde e_\eta$ are projectors.

Actually, we will construct
the one-parameter family of such systems, depending on a parameter $\theta>0$ with
$\theta\in \R\smallsetminus \Q$. We can think of this additional parameter as
a regularization parameter for the original system $(\cA_{\cH_\cD,\cE},\sigma_t)$
that eliminates the divergence of the partition function.

\begin{defn}\label{Otheta}Given a $\theta\in \R^*_+ \smallsetminus \Q^*_+$, let $\Omega_\theta$ denote the
set $\Omega_\theta =\N \theta +\Z_+ = \{ a\theta+b \,|\, a\in \N, \, b\in \Z_+ \}$.
\end{defn}

\begin{lem}\label{pisoms}Let the semigroup be chosen as $\cE=\cE_{\bar \Q,0}$.
Consider the Hilbert space $\ell^2(\cE \times \Omega_\theta)$ with the standard
orthonormal basis $\{ \epsilon_{\eta,\omega} \}_{\eta\in \cE, \lambda\in \Omega_\theta}$.
Let $\mu_\eta$ be the operators
\begin{gather}\label{muetaL}
\mu_\eta \epsilon_{\eta',\lambda}= \begin{cases} \epsilon_{\eta'\circ \eta, \alpha_\eta^{-1}(\lambda)}, & \text{if}\
\alpha_\eta^{-1}(\lambda)\in \Omega_\theta, \\ 0, & \text{otherwise} \end{cases}
\end{gather}
where
\begin{gather*} \alpha_\eta = \begin{pmatrix} d & m-1 \\ 0 & 1 \end{pmatrix} \end{gather*}
is the image of $\eta$ under the semigroup homomorphism~\eqref{Mat2hom}, with
$d=\deg(\eta)$ and $m=\#\eta^{-1}(0)$, and the matrix acts on $\lambda$ by
fractional linear transformations.

The $\mu_\eta$ are partial isometries with $\mu_\eta \mu_\eta^* =e_\eta$ (the projector defined
as before), and $\mu_\eta^* \mu_\eta =\tilde e_\eta$ is another projector defined by
\begin{gather*}
 \tilde e_\eta \epsilon_{\eta',\lambda}= \begin{cases} \epsilon_{\eta',\lambda}, & \text{if} \
\alpha_\eta^{-1}(\lambda)\in \Omega_\theta, \\ 0, & \text{otherwise}. \end{cases}
\end{gather*}
\end{lem}

\begin{proof} The matrix $\alpha_\eta$ maps $\Omega_\theta$ to itself by the action
by fractional linear transformations, $\lambda \mapsto \alpha_\eta(\lambda)=d\lambda + m-1$.
The inverse $\alpha_\eta^{-1}$ is given by the matrix
\begin{gather*} \alpha_\eta^{-1}=\begin{pmatrix} d^{-1} & d^{-1}(1-m) \\ 0 & 1 \end{pmatrix} . \end{gather*}
The condition that for a given $\lambda=a\theta+b \in \Omega_\theta$, we have
$\alpha_\eta^{-1}(\lambda)=d^{-1} a \theta + d^{-1} (b + 1-m)$ also in~$\Omega_\theta$
is satisfied if $d|a$, $b+1-m\geq 0$, and $d|(b+1-m)$. Thus, it is on this subset
$\Omega_\theta(\eta)\subset \Omega_\theta$ that the operation $\alpha_\eta^{-1}(\theta)$
is defined with values in $\Omega_\theta$. The projector $\tilde e_\eta$ is given by the
characteristic function of this subset. The adjoint $\mu_\eta^*$ of the operator $\mu_\eta$ of
\eqref{muetaL} is given by $\mu_\eta^* \epsilon_{\eta',\lambda}= \epsilon_{\eta'\circ \phi_\eta,\alpha_\eta(\lambda)}$
if $\eta'\in \cE_\eta$ and zero otherwise. Thus, $\mu_\eta$ and $\mu^*_\eta$ satisfy the relations as stated.
\end{proof}

Now, given a Belyi function $f\colon \Sigma \to \P^1$ and a character $\varphi\in \Hom\big(\cH_\cD,\bar\Q\big)$, consider the operators $\pi_\varphi(f)$ defined as before, acting as
$\pi_\varphi(f) \epsilon_{\eta,\lambda}=\varphi(\eta(D)) \epsilon_{\eta,\lambda}$.

\begin{lem}\label{relpisomlem}Let $\sigma_\eta(f)=f\circ \eta$ as before, for all Belyi maps $f \in \cB$ and all $\eta\in \cE$. Also let $\rho_\eta(f)=f\circ \phi_\eta$ for $f \in \cB_\eta$ be the partial inverses. With the
partial isometries $\mu_\eta$ and $\mu_\eta^*$ as in Lemma~{\rm \ref{pisoms}} we have
\begin{gather}\label{relpisom}
 \mu_\eta^* \pi_\varphi(f) \mu_\eta = \pi_\varphi(\sigma_\eta(f))e_\eta \tilde e_\eta, \qquad
\mu_\eta \pi_\varphi(f) \mu_\eta^* = \pi_\varphi(\rho_\eta(f)) e_\eta \tilde e_\eta.
\end{gather}
\end{lem}

\begin{proof}
We have $\mu_\eta^* \pi_\varphi(f) \mu_\eta \epsilon_{\eta',\lambda}= \varphi(\eta' \eta (D)) e_\eta \tilde e_\eta \epsilon_{\eta',\lambda} = \pi_\varphi(\sigma_\eta(f))e_\eta \tilde e_\eta \epsilon_{\eta',\lambda}$. The other
case is similar.
\end{proof}

\begin{defn}\label{extalgAHE}Let the semigroup be chosen as $\cE=\cE_{\bar \Q,0}$.
For a fixed $\theta\in \R^*_+\smallsetminus\Q^*_+$ and a character $\varphi\in \Hom\big(\cH_\cD,\bar\Q\big)$
as above, for which the operators $\pi_\varphi(f)$ are bounded, the extended quantum statistical
mechanical system of dessins is given by the $C^*$-subalgebra
$\cA_{\cH_\cD,\cE,\theta}\subset \cB\big(\ell^2(\cE\times \Omega_\theta)\big)$ generated by the
$\pi_\varphi(f)$, with $f\colon \Sigma \to \P^1$ ranging over the Belyi functions and the partial isometries
$\mu_\eta, \mu_\eta^*$, with the relations $\mu_\eta \mu_\eta^* =e_\eta$ and $\mu_\eta^* \mu_\eta =\tilde e_\eta$ and~\eqref{relpisom}. The time evolution on $\cA_{\cH_\cD,\cE,\theta}$ is again given by
$\sigma_t(\mu_\eta)=\deg(\eta)^{{\rm i}t} \mu_\eta$.
\end{defn}

Modifying the isometries $\mu_\eta$ on $\ell^2(\cE)$ to partial
isometries on $\ell^2(\cE\times \Omega_\theta)$ by introducing the
projection $\tilde e_\eta$ will make it possible to extend the Hamiltonian
determined by the degree map to an operator~\eqref{Htheta} for
which ${\rm e}^{-\beta H}$ is trace class for large $\beta>0$. However,
this changes significantly some of the properties of the algebra of
observable as follows.

\begin{rem}\label{ground0}Unlike what happens in the original Bost--Connes system, in our case the partial
isometries $\mu_\eta$ and $\mu_\eta^*$ are {\em not} physical creation-annihilation
operators. Indeed, the ground state $\epsilon_{{\rm id},\theta}$ is in the kernel of both the range projections $e_\eta=\mu_\eta \mu_\eta^*$ and the source projections $\tilde e_\eta=\mu_\eta^* \mu_\eta$, since the identity map
does not factor through another Belyi-extending map, hence $e_\eta \epsilon_{{\rm id},\theta}=0$,
and $\theta$ does not satisfy $\alpha_\eta^{-1}(\theta)\in \Omega_\theta$, since for $a>1$ and $b\geq 0$
we have $\alpha_\eta^{-1}(\theta)=a^{-1}(\theta-b)<\theta$, hence $\tilde e_\eta \epsilon_{{\rm id},\theta}=0$ as well.
\end{rem}

\begin{defn}\label{Hextdef} Consider a family of densely defined unbounded linear operators $H$ on
$\ell^2(\cE\times \Omega_\theta)$ given by
\begin{gather}\label{Htheta}
H \epsilon_{\eta,\lambda} = \begin{cases}
 (F(\alpha_\eta(\lambda)-F(\theta)) \log(\deg(\eta)) \epsilon_{\eta,\lambda} & \text{if}\ \eta\neq {\rm id}, \\
 (F(\lambda)-F(\theta)) \epsilon_{\eta,\lambda} & \text{if} \ \eta={\rm id},
 \end{cases}
\end{gather}
where $F$ is a real valued function on the set $\Omega_\theta$.
\end{defn}

The specific form of the function $F$ and the conditions on the choice of the parameter $\theta$ will
be determined in Propositions~\ref{multH} and~\ref{propZbetatheta} below.

\begin{prop}\label{multH}Let the semigroup be chosen as $\cE=\cE_{\bar \Q,0}$.
We choose $\theta \in \R^*_+\smallsetminus \Q^*_+$ to be an algebraic number such that
$\big\{ 1, \theta, \theta^2 \big\}$ are linearly independent over~$\Q$. Let $F(\lambda)=\lambda^2$.
Then, for $d\neq 1$, the multiplicity $M(d,m,\lambda)$ of an eigenvalue
$(F(d\lambda +m-1)-F(\theta)) \log(d)$ of the operator~$H$ is equal to
\begin{gather}\label{Mdml}
M(d,m,\lambda) = 2 + \# T_{d,m},
\end{gather}
where $\# T_{d,m}$ is the number of bipartite trees with $d$ edges and $m$ white vertices, given by~\eqref{dmtrees}. For $d=1$, each eigenvalue $F(\lambda)-F(\theta)$ of $H$ has multiplicity one.
\end{prop}

\begin{proof} Suppose that $\epsilon_{\eta,\lambda}$ and $\epsilon_{\eta',\lambda'}$, with $\eta,\eta'\neq {\rm id}$, are in the same eigenspace of~$H$. We have, for $d=\deg(\eta)$, $d'=\deg(\eta')$,
$m=\#\eta^{-1}(0)$, and $m'=\#(\eta')^{-1}(0)$,
\begin{gather*} (F(d\lambda +m-1)-F(\theta)) \log(d) = (F(d' \lambda' + m'-1)-F(\theta)) \log(d'), \end{gather*}
where $\lambda=a\theta+b, \lambda'=a'\theta+b'\in \Omega_\theta$.
By our choice of $F$, it takes values $F(\lambda)\in \bar\Q$ for
all $\lambda\in \Omega_\theta$.

Recall that if we have algebraic numbers $\alpha_1$, $\alpha_2$, $\beta_1$, $\beta_2$
such that $\log(\alpha_1)$ and $\log(\alpha_2)$ are linearly independent over $\Q$, then
$\beta_1 \log(\alpha_1) + \beta_2 \log(\alpha_2)\neq 0$.
 This shows that, for $d$, $d'$ with~$\log(d)$, $\log(d')$ linearly independent over $\Q$ we have
$F(d\lambda +m-1) \log(d) - F(d' \lambda' + m'-1) \log(d')\neq 0$. So we
only need to check the dependent case.

Two logarithms of integers $\log(d)$, $\log(d')$
are linearly dependent over $\Q$, if $d^\alpha =(d')^\beta$ for some
$\alpha,\beta\in \Q^*_+$ (hence $d$ and $d'$ have the same prime factors).
Thus we can write $d=\delta^k$, $d'=\delta^\ell$ for $\delta \in \R^*_+$ and $k,\ell \in \Q^*_+$.
We are then looking at the relation
\begin{gather}\label{kellFrel}
 k (F(d\lambda +m-1)-F(\theta)) = \ell (F(d' \lambda' + m'-1)-F(\theta)).
\end{gather}
By our choice of $\theta$ and of the function $F$, a relation of the form
$k (F(u\theta+v)-F(\theta))= \ell (F(u' \theta + v')-F(\theta))$ for some $k,\ell\in \Q^*_+$, some
$u,u'\in \N$, and some $v,v'\in \Z_+$ gives
\begin{gather*}
k \big((u\theta+v)^2 -\theta^2\big) = \ell \big((u'\theta+v')^2 -\theta^2\big) ,
\end{gather*}
that is,
\begin{gather*}
k \big(u^2\theta^2 + 2 uv \theta + v^2 -\theta^2\big) = \ell \big((u')^2 \theta^2 + 2u'v'\theta + (v')^2 -\theta^2\big).
\end{gather*}

The independence of $\{ 1, \theta, \theta^2 \}$ over $\Q$ implies relations
\begin{gather*} k \big(u^2-1\big) = \ell \big((u')^2-1\big), \qquad k uv = \ell u' v', \qquad k v^2 =\ell (v')^2. \end{gather*}
The last one gives $v'=\sqrt{k/\ell} v$, which substituted in the second one gives
$u'=\sqrt{(k/\ell)} u$.

The first one then gives $k \big(u^2-1\big)=\ell \big((k/\ell) u^2-1\big)$,
hence $k=\ell$, so that $d=d'$, $u=u'$ and $v=v'$.
Thus, we obtain that the relation~\eqref{kellFrel} is satisfied for $k=\ell$ (hence $d=d'$) and
$da\theta +db+ m-1 =d a'\theta + db' + m'-1$ which gives $a=a'$ and $db+m-1=db'+m'-1$.
The latter equality gives $b-b' = (m'-m)/d$.

 We have both $1\leq m \leq d$ and $1\leq m' \leq d$ so that
$0\leq |m'-m| \leq d-1$ and $(m'-m)/(d-1) \in \Z$ for either $m=m'$ (so $b=b'$) or for either $m=1$ and $m'=d$ or $m=d$ and $m'=1$. In the first of these two cases $b'=b-1$ and in the other $b'=b+1$.
In each of the cases where $m=1$ and $m'=d$ or $m=d$ and $m'=1$ the number of bipartite
trees $T$ with fixed $d=\# E(T)$ and with either one or $d=\# V(T)-1$ white vertices is just one,
consisting of the single vertex of the different color and $d$ edges from it to the remaining
vertices all of the other color. Thus, in both of these cases the multiplicity is equal to one,
while in the remaining case with $m=m'$ and $b=b'$ the multiplicity is the number of bipartite
trees with $d$ edges and $m$ white vertices.

Thus, we obtain that the overall multiplicity of the
eigenvalue $F(d\lambda + m-1)\log(d)$ of $H$ is equal to $M_{d,m,\lambda}=2+\# T_{d,m}$
as in \eqref{Mdml}. For $d=1$, we have $\eta={\rm id}$ and the condition $F(\lambda)=F(\lambda')$
implies $\lambda=\lambda'$, so all these eigenspaces are one-dimensional.
In particular, the kernel of the operator $H$ is one-dimensional, spanned by the
vector $\epsilon_{{\rm id},\theta}$.
\end{proof}

\begin{prop}\label{covreptheta}Consider the operator $H$ of \eqref{Htheta}.
The operators ${\rm e}^{{\rm i}tH}$ for $t\in \R$ determine a covariant representation of the quantum statistical mechanical system $(\cA_{\cH_\cD,\cE,\theta}, \sigma_t)$ of Definition~{\rm \ref{extalgAHE}} on the Hilbert space $\ell^2(\cE\times \Omega_\theta)$.
\end{prop}

\begin{proof} The covariance condition prescribes that the time evolution $\sigma_t$ on $\cA_{\cH_\cD,\cE,\theta}$
is implemented by the Hamiltonian $H$ in the representation on $\ell^2(\cE\times \Omega_\theta)$,
\begin{gather*}
 {\rm e}^{{\rm i}tH} X {\rm e}^{-{\rm i}tH} \epsilon_{\eta,\lambda} = \sigma_t(X) \epsilon_{\eta,\lambda},
\end{gather*}
for all basis elements $\epsilon_{\eta,\lambda}$. We check this on the generators $\mu_\eta, \mu_\eta^*$
and $\pi_\varphi(f)$ of the algebra. For $X=\mu_\eta$ we have
\begin{gather*} \sigma_t(\mu_\eta) \epsilon_{\eta',\lambda} = \deg(\eta)^{{\rm i}t} \epsilon_{\eta'\circ \eta, \eta^{-1}(\lambda)} \end{gather*}
or zero if $\eta^{-1}(\lambda)\notin \Omega_\theta$. On the other hand, for $\eta'\neq {\rm id}$, we have
\begin{align*}
{\rm e}^{{\rm i}tH} \mu_\eta {\rm e}^{-{\rm i}tH} \epsilon_{\eta',\lambda} & = {\rm e}^{{\rm i}tH} \mu_\eta {\rm e}^{-{\rm i}t F(\alpha_{\eta'}(\lambda))}
\deg(\eta')^{-{\rm i}t} \epsilon_{\eta',\lambda} \\
& = {\rm e}^{{\rm i}tH} {\rm e}^{-{\rm i}t F(\alpha_{\eta'}(\lambda))}
\deg(\eta')^{-{\rm i}t} \epsilon_{\eta' \circ \eta, \eta^{-1}(\lambda)}
\end{align*}
or zero if $\eta^{-1}(\lambda)\notin \Omega_\theta$. In the non-zero case it is then equal to
\begin{gather*}
{\rm e}^{{\rm i}t F(\alpha_{\eta'\circ \eta} (\alpha_{\eta}^{-1}(\lambda))} \deg(\eta'\circ \eta)^{{\rm i}t}
 {\rm e}^{-{\rm i}t F(\alpha_{\eta'}(\lambda))} \deg(\eta)^{-{\rm i}t} \epsilon_{\eta' \circ \eta, \eta^{-1}(\lambda)}.
\end{gather*}
Because $\Upsilon(\eta)=\alpha_\eta$ is a semigroup homomorphism and so is the degree, we have
$F(\alpha_{\eta'\circ \eta} (\alpha_{\eta}^{-1}(\lambda)))\allowbreak =F(\alpha_{\eta'}(\lambda))$ and
the above gives
\begin{gather*} {\rm e}^{{\rm i}tH} \mu_\eta {\rm e}^{-{\rm i}tH} \epsilon_{\eta',\lambda} = \deg(\eta)^{{\rm i}t} \mu_\eta \epsilon_{\eta',\lambda}. \end{gather*}
On the vectors $\epsilon_{1,\lambda}$ we have
\begin{align*}
{\rm e}^{{\rm i}tH} \mu_\eta {\rm e}^{-{\rm i}tH} \epsilon_{1,\lambda} & = {\rm e}^{-{\rm i}t (F(\lambda)-F(\theta))} {\rm e}^{{\rm i}tH} \epsilon_{\eta,\eta^{-1}(\lambda)}
\\
& = {\rm e}^{-{\rm i}t (F(\lambda)-F(\theta))} \deg(\eta)^{{\rm i}t} {\rm e}^{{\rm i}t (F(\alpha_{\eta\circ\eta^{-1}}(\lambda)-F(\theta)}
\epsilon_{\eta,\eta^{-1}(\lambda)} = \deg(\eta)^{{\rm i}t} \mu_\eta \epsilon_{1,\lambda} ,
\end{align*}
for $\eta^{-1}(\lambda)\in \Omega_\theta$, and zero otherwise. So ${\rm e}^{{\rm i}tH}$ implements the
time evolution on the partial isometries $\mu_\eta$. In the case of their adjoints we similarly have for
${\rm e}^{{\rm i}tH} \mu_\eta^* {\rm e}^{-{\rm i}tH} \epsilon_{\eta',\lambda}$ the expression
\begin{gather*} {\rm e}^{-{\rm i}t (F(\alpha_{\eta'}(\lambda))-F(\theta))} \deg(\eta' \circ \phi_\eta)^{{\rm i}t} \deg(\eta')^{-{\rm i}t}
{\rm e}^{{\rm i}t (F(\alpha_{\eta'\circ \phi_\eta \circ \eta}(\lambda)-F(\theta))} \epsilon_{\eta'\circ \phi_\eta,\eta(\lambda)}, \end{gather*}
when $\eta'\in \cE_\eta$ with $\eta'\circ \phi_\eta=h'\in \cE$, and zero otherwise. Here we have
\begin{gather*}
\deg(\eta' \circ \phi_\eta)=\deg(h')=\frac{\deg(\eta')}{\deg(\eta)},
\end{gather*}
so we obtain
$\deg(\eta)^{-{\rm i}t} \epsilon_{\eta'\circ \phi_\eta,\eta(\lambda)} = \sigma_t(\mu_\eta^*)\epsilon_{\eta',\lambda}$.
When $\eta'={\rm id}$ we have $\eta'\notin \cE_\eta$ for $\eta\neq {\rm id}$, hence
$\mu_\eta^* \epsilon_{1,\lambda}=0$. In the case where $X=\pi_\varphi(f)$ we have
\begin{gather*} {\rm e}^{{\rm i}tH} \pi_\varphi(f) {\rm e}^{-{\rm i}tH} \epsilon_{\eta',\lambda} = \varphi(\eta'(D)) \epsilon_{\eta',\lambda} =
 \pi_\varphi(f) \epsilon_{\eta',\lambda} . \end{gather*}
So indeed the operator $H$ gives a covariant representation of $(\cA_{\cH_\cD,\cE,\theta}, \sigma_t)$ on
the Hilbert space $\ell^2(\cE\times \Omega_\theta)$.
\end{proof}

\begin{prop}\label{propZbetatheta}
Let $H$ be the operator of \eqref{Htheta}, with $F$ and $\theta$ chosen as in Proposition~{\rm \ref{multH}}.
We also assume that $\theta>1$. Then the partition function for the semigroup $\cE=\cE_{\bar \Q,0}$
\begin{gather}\label{Zbetatheta}
Z(\beta) =\Tr \big({\rm e}^{-\beta H}\big) = \sum_{\eta\in \cE, \lambda\in \Omega_\theta} M_{\deg(\eta), m(\eta), \lambda} {\rm e}^{-\beta F(\alpha_\eta(\lambda))} \deg(\eta)^{-\beta}
\end{gather}
converges for $\beta>1$.
\end{prop}

\begin{proof} By Proposition~\ref{multH}, the partition function takes the form
\begin{align*} Z(\beta) & = \sum_{d,a\geq 1,b\geq 0} \sum_{1\leq m\leq d} M(d,m,\lambda)
{\rm e}^{-\beta (F(d(a\theta+b)+ m-1)-F(\theta))} d^{-\beta} \\
& = \sum_{a\geq 1,b\geq 0} {\rm e}^{-\beta (F(a\theta+b)-F(\theta))} +
\sum_{d >1, a\geq 1,b\geq 0} \sum_{1\leq m\leq d} (2+ \# T_{d,m})
{\rm e}^{-\beta F(d(a\theta+b)+ m-1)} d^{-\beta}. \end{align*}
The first series, which corresponds to the case $d=1$ (hence $m=1$) is, up to a multiplicative
factor ${\rm e}^{\beta\theta^2}$ the series
$\sum\limits_{a\geq 1,b\geq 0} {\rm e}^{-\beta (a\theta+b)^2}$.
Since we chose $\theta>1$, we have $(a\theta+b)^2> a\theta+b$ for all $a\geq 1$ and $b\geq 0$ and
we can estimate first sum by the series $\sum\limits_{a,b} {\rm e}^{-\beta (a\theta+b)}=\sum\limits_{a\geq 1} {\rm e}^{-\beta\theta a} \sum\limits_{b\geq 0} {\rm e}^{-\beta b}$ which is convergent for all $\beta>0$.

Thus, it remains to check that the series
\begin{gather*} \sum_{d,a,b} \sum_{1\leq m\leq d} m^{d+1-m} (d+1-m)^{m-1} {\rm e}^{-\beta (d(a\theta+b)+ m-1)^2} d^{-\beta} \end{gather*}
converges for sufficiently large $\beta>0$. We can estimate the terms of this series as follows.
For $m=1$ we have $\# T_{d,1}=1$ and the sum reduces to the form
\begin{gather*} \sum_{d,a,b} {\rm e}^{-\beta (d(a\theta+b))^2} d^{-\beta} \leq \sum_{a,b} {\rm e}^{-\beta (a\theta+b)^2} \sum_d d^{-\beta}, \end{gather*}
which is convergent for $\beta>1$. Equivalently, we can estimate this sum in terms of the Jacobi theta constant and the series $\sum_d \theta_3\big(0,{\rm e}^{-\beta d^2}\big) d^{-\beta}$. Since
$\theta_3(0,q)\to 1$ for $q\to 0$, the behavior of the series then depends on
the behavior of $\sum_d d^{-\beta}$ that is convergent for $\beta>1$.
More generally the multiplicity $\# T_{d,m}=m^{d+1-m} (d+1-m)^{m-1}$ satisfies $\# T_{d,m} \leq d^{2d}$ since
$1\leq m \leq d$. Since $\theta>1$ we have $\Omega_\theta=\N\theta+\Z_+
\subset (1,\infty)$. For $m>1$ we estimate the sum by
\begin{align*} \sum_{d,a,b} \sum_{1< m\leq d} d^{2d} {\rm e}^{-\beta (d(a\theta+b)+ m-1)^2} d^{-\beta} &\leq
\sum_d d^{2d} {\rm e}^{-\beta d^2} d^{-\beta} \sum_{a,b} \sum_{m>1} {\rm e}^{-\beta (m-1)^2} {\rm e}^{-\beta (a\theta+b) (m-1)} \\
& \leq \sum_d d^{2d} {\rm e}^{-\beta d^2} d^{-\beta} \sum_{a,b} {\rm e}^{-\beta (a\theta+b)} \sum_{\ell \geq 1} {\rm e}^{-\beta \ell^2}. \end{align*}
The series $\sum\limits_{a\geq 1, b\geq 0} {\rm e}^{-\beta (a\theta+b)}$ and $\sum\limits_{a\geq 1, b\geq 0} {\rm e}^{-\beta (a\theta+b)^2}$ are convergent for all $\beta>0$, and so is the series $\sum\limits_{\ell\geq 1} {\rm e}^{-\beta \ell^2}$. The series $\sum\limits_{d\geq 1} d^{2d} {\rm e}^{-\beta d^2} d^{-\beta} = \sum\limits_{d\geq 1} {\rm e}^{2d \log(d) -\beta (d^2 +\log(d))}$ is also convergent.
\end{proof}

\subsection{Gibbs KMS states and zero-temperature states}\label{ZeroStateSec}

In this subsection we show what the KMS states look like for a modified
system of the kind introduced in the previous subsection. We find that again
the invariants~$\varphi(\eta(D))$ appear in the low temperature KMS states,
in the form of a weighted sum of the $\varphi(\eta(D))$, with $\eta$ ranging
over the chosen semigroup of Belyi maps.
Unlike the case of Corollary~\ref{KMSconv}, however, in this case the invariants
$\varphi(\eta(D))$ do not occur individually as values of zero-temperature states
on elements $\mu_\eta^* \pi_\varphi(f) \mu_\eta$ of the algebra.
As in the previous subsection, we only discuss here explicitly the case of the
semigroup $\cE=\cE_{\bar\Q,0}$ so that we can use the explicit form of the
quantum statistical mechanical system constructed above.

Consider a character $\varphi\in \Hom_{{\rm Alg}_\Q}\big(\cH_\cD,\bar\Q\big)$ that satisfies the
boundedness condition for the operators $\pi_\varphi(f)$, and let $H$ be as in
\eqref{Htheta}, with $F$ and $\theta$ as in Proposition~\ref{propZbetatheta},
for the semigroup $\cE=\cE_{\bar\Q,0}$.
In the range $\beta>1$ where the series~\eqref{Zbetatheta} is convergent,
the low temperature Gibbs KMS states of the quantum statistical
mechanical system are given by
\begin{gather*}
 \psi_{\beta,\varphi}(X) =Z(\beta)^{-1} \sum_{\eta,\lambda} \big\langle \epsilon_{\eta,\lambda},
X {\rm e}^{-\beta H} \epsilon_{\eta,\lambda} \big\rangle,
\end{gather*}
for all $X\in \cA_{\cH_\cD, \cE,\theta}$. We are interested here in the values of
these Gibbs states on the arithmetic abelian subalgebra $\cH_\cD$ of $\cA_{\cH_\cD, \cE,\theta}$.
These are given by
\begin{gather}\label{Gibbstheta}
 \psi_{\beta,\varphi}(D) =Z(\beta)^{-1} \Bigg(
 \sum_{\eta\in \cE\smallsetminus \{ {\rm id} \}, \lambda\in \Omega_\theta} \varphi(\eta(D))
 {\rm e}^{-\beta (F(\alpha_\eta(\lambda))-F(\theta))} \deg(\eta)^{-\beta} \\
\hphantom{\psi_{\beta,\varphi}(D) =Z(\beta)^{-1} \Bigg(}{} + \varphi(D) \sum_{\lambda\in \Omega_\theta}
 {\rm e}^{-\beta (F(\lambda)-F(\theta))} \Bigg) .\tag*{\qed}
\end{gather}\renewcommand{\qed}{}
\end{proof}

\begin{lem}\label{zerolim} Let $H$ be as in \eqref{Htheta}, with $F$ and $\theta$ as in Proposition~{\rm \ref{propZbetatheta}}, with the semigroup $\cE=\cE_{\bar\Q,0}$.
In the zero-temperature
limit where $\beta\to \infty$, the ground states, evaluated on the rational subalgebra $\cH_\cD$
are given by the limits
\begin{gather*}
 \psi_{\infty,\varphi} (D)=\lim_{\beta\to \infty}\psi_{\beta,\varphi}(D) = \varphi(D).
\end{gather*}
\end{lem}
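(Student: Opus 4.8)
The plan is to extract the $\beta\to\infty$ limit directly from the Gibbs expression~\eqref{Gibbstheta}, isolating the contribution of the ground state $\epsilon_{{\rm id},\theta}$. Writing the state as a single sum over the orthonormal basis $\{\epsilon_{\eta,\lambda}\}$, one has
\begin{gather*}
\psi_{\beta,\varphi}(D)=Z(\beta)^{-1}\sum_{\eta\in\cE,\,\lambda\in\Omega_\theta}\varphi(\eta(D))\,{\rm e}^{-\beta E_{\eta,\lambda}},\qquad
Z(\beta)=\sum_{\eta\in\cE,\,\lambda\in\Omega_\theta}{\rm e}^{-\beta E_{\eta,\lambda}},
\end{gather*}
where $E_{\eta,\lambda}$ is the energy carried by $\epsilon_{\eta,\lambda}$, read off from~\eqref{Htheta} and the exponents in~\eqref{Gibbstheta}. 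By Proposition~\ref{multH} and Remark~\ref{ground0}, the only vector with $E_{\eta,\lambda}=0$ is the ground state $\epsilon_{{\rm id},\theta}$, and its contribution to the numerator is $\varphi({\rm id}(D))=\varphi(D)$ and to $Z(\beta)$ is $1$.

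The analytic core is to show that all remaining energies are bounded away from $0$ by a single positive constant $E_{\min}$. For $\eta={\rm id}$ and $\lambda\neq\theta$ one has $E_{{\rm id},\lambda}=F(\lambda)-F(\theta)=\lambda^2-\theta^2$, which is positive and minimized at the second-smallest element of $\Omega_\theta$, since $\theta$ is the least element of $\Omega_\theta=\N\theta+\Z_+$ (attained at $a=1$, $b=0$) and $F(\lambda)=\lambda^2$ is increasing on $\Omega_\theta\subset(1,\infty)$. For $\eta\neq{\rm id}$ one has $\deg(\eta)\geq2$, hence $\log\deg(\eta)\geq\log2>0$, while $\alpha_\eta(\lambda)=d\lambda+m-1\geq2\theta$, so $F(\alpha_\eta(\lambda))-F(\theta)\geq3\theta^2>0$; in either normalization of the energy this forces $E_{\eta,\lambda}$ to exceed a fixed positive bound. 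Taking $E_{\min}$ to be the minimum of these lower bounds gives $E_{\eta,\lambda}\geq E_{\min}>0$ for every $(\eta,\lambda)\neq({\rm id},\theta)$.

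I would then finish with a geometric-decay estimate. Set $R(\beta):=Z(\beta)-1=\sum_{(\eta,\lambda)\neq({\rm id},\theta)}{\rm e}^{-\beta E_{\eta,\lambda}}$. Fixing any $\beta_0>1$ for which the series converges by Proposition~\ref{propZbetatheta}, for $\beta>\beta_0$ the factorization ${\rm e}^{-\beta E_{\eta,\lambda}}={\rm e}^{-(\beta-\beta_0)E_{\eta,\lambda}}{\rm e}^{-\beta_0 E_{\eta,\lambda}}$ together with $E_{\eta,\lambda}\geq E_{\min}$ gives $R(\beta)\leq {\rm e}^{-(\beta-\beta_0)E_{\min}}R(\beta_0)\to0$. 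The boundedness condition~\eqref{boundedop} yields $C:=\sup_{\eta}|\varphi(\eta(D))|<\infty$, so the non-ground-state part of the numerator is bounded in absolute value by $C\,R(\beta)$, and hence
\begin{gather*}
\psi_{\beta,\varphi}(D)=\frac{\varphi(D)+O(R(\beta))}{1+R(\beta)}\longrightarrow\varphi(D)\qquad(\beta\to\infty).
\end{gather*}

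The step I expect to be the main obstacle is the uniform spectral gap $E_{\min}>0$: only a uniform lower bound (rather than mere positivity of each $E_{\eta,\lambda}$) lets me dominate the tail and pass the limit through the infinite sum. This rests squarely on the hypotheses $\theta>1$ and $F(\lambda)=\lambda^2$ from Proposition~\ref{propZbetatheta}, together with $\theta$ being the least element of $\Omega_\theta$; once these are in place, the conclusion is the standard Bost--Connes-type fact that only the ground state survives at zero temperature.
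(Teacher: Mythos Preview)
Your proof is correct and follows essentially the same approach as the paper: identify the unique ground state $\epsilon_{{\rm id},\theta}$ and show that the normalized trace collapses to its contribution $\varphi(D)$ as $\beta\to\infty$. You in fact supply more than the paper does, making explicit the uniform spectral gap and the geometric-decay estimate on the remainder that the paper's brief argument only implicitly invokes.
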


\begin{proof} The ground state of the Hamiltonian $H$ of \eqref{Htheta}, with the choice of $F$ and
$\theta$ as in Proposition~\ref{propZbetatheta},
corresponds to $d=1$ and $m=1$ and to $\lambda=\theta$, and is spanned by
the vector~$\epsilon_{{\rm id},\theta}$. In the limit where $\beta\to \infty$ the
expression \eqref{Gibbstheta}, which is the normalized trace
\begin{gather*} \psi_{\beta,\varphi}(D) =\frac{\Tr\big(\pi_\varphi(f) {\rm e}^{-\beta H}\big)}{\Tr\big({\rm e}^{-\beta H}\big)}= Z(\beta)^{-1}
\sum_{\eta,\lambda} \big\langle \epsilon_{\eta,\lambda}, \pi_\varphi(f) {\rm e}^{-\beta H} \epsilon_{\eta,\lambda} \big\rangle \end{gather*}
converges to $\langle \eta_{{\rm id},\theta}, \pi_\varphi(f) \eta_{{\rm id},\theta}\rangle=\varphi(D)$.
\end{proof}

In the case of this quantum statistical mechanical system all the values $\varphi(\eta(D))$ for $\eta \in \cE_{\bar\Q,0}$ (or another semigroup for which a similar system can be constructed) are built into the Gibbs states evaluated on the elements of the rational subalgebra, as the expression~\eqref{Gibbstheta} shows. However, one cannot extract an individual term $\varphi(\eta(D))$ from the Gibbs states by taking the zero-temperature limit, because of the observation in Remark~\ref{ground0}. Indeed, the zero-temperature states evaluate trivially on elements of the form $\mu_\eta^* \pi_\varphi(f) \mu_\eta$ since we have
$\psi_{\infty,\varphi}(\mu_\eta^* \pi_\varphi(f) \mu_\eta)=\langle \epsilon_{{\rm id},\theta},
\mu_\eta^* \pi_\varphi(f) \mu_\eta \epsilon_{{\rm id},\theta} \rangle$, but
$\mu_\eta \epsilon_{{\rm id},\theta}=0$ since $\eta^{-1}(\theta)\notin\Omega_\theta$.

On the other hand, we can still obtain the intertwining of symmetries and Galois
action for zero-temperature KMS states evaluated on the arithmetic subalgebra.

\begin{prop}\label{interKMS}Suppose that $\cE$ is a semigroup of Belyi-extending maps for which the construction of the extended quantum statistical mechanical system $(\cA_{\cH_\cD, \cE},\sigma_t)$
can be applied. Let $\varphi\in \Hom_{{\rm Alg}_\Q}\big(\cH_\cD,\bar\Q\big)$ be a balanced character as in
Definition~{\rm \ref{balchar}}. Then the KMS Gibbs state~$\psi_{\infty,\varphi}$
at zero temperature evaluated on the rational subalgebra $\cH_\cD$
intertwines the action of $G={\rm Gal}\big(\bar\Q/\Q\big)$ by symmetries
of the quantum statistical mechanical system $(\cA_{\cH_\cD, \cE},\sigma_t)$
with the Galois action of~$G$ on~$\bar\Q$.
\end{prop}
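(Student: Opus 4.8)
The plan is to reduce the intertwining to two facts already established: the zero-temperature evaluation $\psi_{\infty,\varphi}(D)=\varphi(D)$ of Lemma~\ref{zerolim}, and the defining balanced identity~\eqref{balchar} of the character $\varphi$. First I would fix the precise meaning of ``intertwining symmetries and Galois action,'' modeled on the Bost--Connes relation $\theta(\gamma)\varphi(a)=\varphi(\gamma a)$ recalled in the introduction. Here the symmetry group is $G={\rm Gal}(\bar\Q/\Q)$ itself: by Proposition~\ref{GHopfact} it acts on $\cH_\cD$ by Hopf algebra homomorphisms $D\mapsto\gamma D$, and this extends to an automorphism $\alpha_\gamma$ of $\cA_{\cH_\cD,\cE}$ (and of the extended algebra $\cA_{\cH_\cD,\cE,\theta}$) fixing every partial isometry $\mu_\eta$; this is legitimate precisely because the Belyi-extending maps are defined over $\Q$, so that the $\cE$-action commutes with the $G$-action as observed in Section~\ref{BelyiExtSec}. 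The statement to prove then reads
\begin{gather*}
\psi_{\infty,\varphi}(\gamma D)=\gamma\big(\psi_{\infty,\varphi}(D)\big),
\end{gather*}
with $\gamma D$ the symmetry action on the algebra generator $D$ on the left and $\gamma$ the Galois action on the value in $\bar\Q$ on the right.

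Second, I would confirm that $\gamma$ genuinely acts by symmetries, i.e., that $\alpha_\gamma$ commutes with the time evolution $\sigma_t$. This is immediate: $\sigma_t$ is trivial on $\cH_\cD$ and multiplies $\mu_\eta$ by $\deg(\eta)^{{\rm i}t}$, while $\alpha_\gamma$ fixes each $\mu_\eta$ and moves only the $\cH_\cD$-part, so $\alpha_\gamma\sigma_t=\sigma_t\alpha_\gamma$. I would then make explicit the structural fact underlying Lemma~\ref{zerolim}: by Proposition~\ref{multH} the kernel of $H$ is one-dimensional, spanned by $\epsilon_{{\rm id},\theta}$, so that for a dessin $D$ with Belyi map $f$ the zero-temperature state is $\psi_{\infty,\varphi}(D)=\langle\epsilon_{{\rm id},\theta},\pi_\varphi(f)\,\epsilon_{{\rm id},\theta}\rangle$. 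The decisive point is that this ground vector sits at $\eta={\rm id}$, the unit of the semigroup $\cE$; since ${\rm id}(\gamma D)=\gamma D$, evaluating $\pi_\varphi$ on the symmetry-transformed generator yields $\langle\epsilon_{{\rm id},\theta},\pi_\varphi(\gamma f)\,\epsilon_{{\rm id},\theta}\rangle=\varphi(\gamma D)$.

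Third, the identity follows in one line. Combining the previous step with Lemma~\ref{zerolim} gives $\psi_{\infty,\varphi}(\gamma D)=\varphi(\gamma D)$; the balanced property~\eqref{balchar} rewrites this as $\gamma\,\varphi(D)$; and a second application of Lemma~\ref{zerolim}, now for $D$, identifies $\varphi(D)$ with $\psi_{\infty,\varphi}(D)$, establishing $\psi_{\infty,\varphi}(\gamma D)=\gamma(\psi_{\infty,\varphi}(D))$. As both sides are multiplicative over connected components and $\cH_\cD$ is the polynomial algebra on the connected dessins, the intertwining propagates from these generators to the entire arithmetic subalgebra.

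I expect the main obstacle to lie not in this final algebraic manipulation but in the hypothesis that the extended construction applies to the given semigroup $\cE$. Concretely, one must know that the analogue of Proposition~\ref{multH} holds for $\cE$, so that the kernel of $H$ is again one-dimensional and the zero-temperature limit is controlled by the single, $\eta={\rm id}$ ground vector; only then is one assured that no residual $\gamma$-dependence enters the limit beyond what is recorded by the balanced character. Once this nondegeneracy is secured, Lemma~\ref{zerolim} together with Definition~\ref{balchardef} forces the intertwining with no further analytic input.
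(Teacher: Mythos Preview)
Your proposal is correct and follows essentially the same route as the paper's own proof: you verify that the $G$-action commutes with $\sigma_t$ (because $G$ acts only on $\cH_\cD$ while $\sigma_t$ acts only on the $\mu_\eta$), then reduce the intertwining to $\psi_{\infty,\varphi}(D)=\varphi(D)$ from Lemma~\ref{zerolim} combined with the balanced identity~\eqref{balchar}. Your discussion is somewhat more explicit than the paper's---in particular your attention to the one-dimensionality of $\ker H$ and to the extension by multiplicativity---but the argument is the same.
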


\begin{proof} First note that the $G$-action on dessins gives an action of $G$ by symmetries
of the quantum statistical mechanical system $(\cA_{\cH_\cD, \cE,\theta},\sigma_t)$, namely by
automorphisms of the algebra~$\cA_{\cH_\cD, \cE,\theta}$ compatible with the time evolution:
$\gamma \circ \sigma_t = \sigma_t \circ \gamma$, for all $t\in \R$ and all $\gamma\in G$.
Here it is convenient to assume that~$\cE$ is a semigroup of Belyi-extending maps rather than a
more general subsemigroup of $\cE_{\bar\Q}$, so that the Galois group acts only on
$\cH_\cD$ and fixes the partial isometries $\mu_\eta$.

Indeed, since in the case of Belyi-extending maps $G$ acts on the abelian subalgebra $\cH_\cD$ by the action of Proposition~\ref{GHopfact} and acts trivially on the partial isometries $\mu_\eta$, $\mu_\eta^*$
while the time evolution acts on the $\mu_\eta$, $\mu_\eta^*$ and acts trivially on $\cH_\cD$, the
two actions commute. (Note, however, that if the semigroup homomorphism $\Upsilon$
generating the time evolution is itself Galois invariant, then the same argument applies
to more general subsemigroups of $\cE_{\bar\Q}$.)

Evaluating the zero-temperature KMS state $\psi_{\infty,\varphi}$
on an element $D$ of $\cH_\cD$ gives $\psi_{\infty,\varphi}(D)=\varphi(D)$. Since $\varphi$
is a balanced character, it also satisfies $\varphi(\gamma D)=\gamma \varphi(D)$ which
gives the intertwining of the $G$-actions.
\end{proof}

Below we consider some variants of quantum statistical mechanical
systems associated to dessins d'enfant.

Generally, in number theory many natural Dirichlet series appear as Euler products
whose $p$-terms encode the results of counting problems (e.g., counting points of a $\Z$-scheme $\rm{mod} p$, and subsequent twisting them by additive or multiplicative characters.

In the examples below, the attentive reader will find analogs of prime decomposition and
twisting by characters, but the central role is taken by counting/enumeration problems themselves.

As a result, we obtain again some formal partition functions/Dirichlet series
such as~\eqref{ZetaUpsilon} and~\eqref{Zetamunu}.
Typically they suffer from the same divergence problem that we
have already discussed in the main part of this section and would require a similar modification of
the underlying algebra of observables and representation. We will not discuss this further
in this paper.

\subsection{Enumeration problems for dessins d'enfant}

The enumeration problem for Grothendieck dessins d'enfant was considered in~\cite{KaZo,Zog}. In~\cite{Zog} it is shown that the generating function for the number of dessins with
assigned ramification profile at $\infty$ and given number of preimages of~$0$ and~$1$ satisfy the infinite system of PDEs given by the Kadomtsev--Petviashvili (KP)
hierarchy. In~\cite{KaZo} it is shown that this generating function satisfies the Eynard--Orantin
topological recursion. Moreover in~\cite{AmbChek} it is shown that the generating function
of the enumeration of dessins d'enfant with fixed genus, degree, and ramification profile
at $\infty$ is the partition function of a matrix model, which in the case of clean dessins
agrees with the Kontsevich--Penner model of~\cite{ChekMak}. We see here that it is also,
in an immediate and direct way, the partition function of a quantum statistical mechanical
system with algebra of observables given by the (Hopf) algebra~$\cH$ constructed above.

Another interesting aspect of the enumeration problem for dessins d'enfant is addressed in~\cite{HaKraLe}, namely a piecewise polynomiality result and a wall crossing phenomenon. More precisely, the enumeration of (not necessarily connected) dessins d'enfant corresponds to the case of ``double strictly monotone Hurwitz numbers'' considered in~\cite{HaKraLe}. In that case, the counting is given by
\begin{gather}\label{hgmunu}
 h_{g;\mu,\nu}=\sum_{\phi} \frac{1}{\# {\rm Aut}(\phi)}
\end{gather}
of all isomorphism classes of branched coverings $\phi\colon \Sigma\to \P^1$ of genus $g$, branched over
$\{0,1,\infty\}$ with assigned ramification profiles $\mu=(\mu_1,\ldots,\mu_m)$ and
$\nu=(\nu_1,\ldots,\nu_n)$ over $0$ and $\infty$, with degree $\sum_i\mu_i= d =\sum_j\nu_j$.
Let $\cH(m,n)=\Big\{ (v,w)\in \N^m\times \N^n\,|\, \sum\limits_{i=1}^m v_i =\sum\limits_{j=1}^n w_j \Big\}$ and let
$\cW(m,n)\subset \cH(m,n)$ be the hyperplane arrangement given by the equations
$\sum\limits_{i\in I} v_i =\sum\limits_{j\in J}w_j$ with $I\subset \{1,\ldots,m\}$, $J\subset \{1,\ldots, n\}$.
The branching profiles $\mu,\nu$, subject to the constraint $\sum_i \mu_i =\sum_j\nu_j$, define
a point in $\cH(m,n)$. It is shown in \cite[Theorem~4.1]{HaKraLe} that in each chamber~$C$ of the
complement of $\cW(m,n)$ there is polynomial $P_{g,C}(\mu,\nu)$ of degree $4g-3+m+n$
in $m+n$ variables $(\mu,\nu)$ such that $h_{g;\mu,\nu}=P_{g,C}(\mu,\nu)$. The behavior in
different chambers is regulated by a wall crossing formula relating the corresponding polynomials.

\subsection{Additive invariants and partition function}

Consider the Hilbert space $\ell^2(\cD)$ generated by the set $\cD$ of all
(not necessarily connected) dessins d'enfant, with the standard orthonormal
basis $\{ \epsilon_D \}_{D\in \cD}$ and with the action of the algebra $\cH_\cD$
by $D\cdot \epsilon_{D'}=\epsilon_{D \cdot D'}$. Since $\cH_\cD$ is also a Hopf algebra,
we have also an adjoint action of the Hopf algebra on itself, of the form $D\colon D'\mapsto \sum \delta' D' S(\delta'')$
where in Sweedler notation $\Delta(D)=\sum \delta'\otimes \delta''$, with $S$ the antipode,
given by the recursive formula $S(\delta'')=-\delta''+\sum S(\delta_1'')\cdot \delta_2''$ for
$\Delta(\delta'')=\sum \delta_1'' \otimes \delta_2''$. Given an element $X=\sum_i a_i D_i$ with
$a_i\in \C$ and $D_i\in \cD$, we write $\epsilon_X$ for the vector $\epsilon_X=\sum_i a_i \epsilon_{D_i}$
in $\ell^2(\cD)$. We can then set $D\cdot_\Delta \epsilon_{D'}:=\epsilon_{\sum \delta' D' S(\delta'')}$.

Let $\cN$ be an additive invariant of dessins d'enfant, that is, an invariant of isomorphism
classes of dessins with the property that it is additive on connected components,
$\cN(D\cdot D')=\cN(D)+\cN(D')$. This is the case for invariants such as the genus, the
degree, the ramification indices.

\begin{lem}\label{addtevol}
An $\R_+$-valued additive invariant $\cN$ of dessins d'enfant determines a time evolution
of the algebra $\cH_{\cD,\C}:=\cH_\cD\otimes_\Q\C$ of the form $\sigma_t(D)={\rm e}^{{\rm i}t \cN(D)} D$,
implemented on $\ell^2(\cD)$ by the Hamiltonian
$H \epsilon_D = \cN(D) \epsilon_D$. The partition function is given by
the generating function of dessins with assigned invariant $\cN(D)$.
\end{lem}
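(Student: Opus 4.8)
The plan is to verify in turn the three assertions packaged in the statement: that $\sigma_t$ is a one-parameter group of algebra automorphisms of $\cH_{\cD,\C}$, that it is implemented by $H$ in the representation on $\ell^2(\cD)$, and that $\Tr\big(e^{-\beta H}\big)$ reproduces the generating function. First I would exploit that, by Proposition~\ref{HopfDessins}, $\cH_\cD$ (hence $\cH_{\cD,\C}=\cH_\cD\otimes_\Q\C$) is the free commutative polynomial algebra on the connected dessins, so $\sigma_t$ is completely determined once it is prescribed on connected dessins and required to be an algebra homomorphism. For a monomial $D=D_1\cdots D_k$ with connected factors $D_i$, additivity gives $\cN(D)=\sum_i \cN(D_i)$, whence $\sigma_t(D)=e^{{\rm i}t\cN(D)}D=\prod_i e^{{\rm i}t\cN(D_i)}D_i=\prod_i \sigma_t(D_i)$; thus $\sigma_t$ extends multiplicatively to a well-defined endomorphism. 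Since $\cN$ is $\R_+$-valued, each $e^{{\rm i}t\cN(D)}$ is a phase, $\sigma_{-t}$ is a two-sided inverse, $\sigma_0=\id$, and $\sigma_s\circ\sigma_t=\sigma_{s+t}$, so $\sigma\colon\R\to\Aut(\cH_{\cD,\C})$ is a genuine time evolution.

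Next I would check covariance of the representation $D\cdot\epsilon_{D'}=\epsilon_{D\cdot D'}$ on $\ell^2(\cD)$ with respect to the diagonal operator $H\epsilon_D=\cN(D)\epsilon_D$. It suffices to verify $e^{{\rm i}tH}\,D\,e^{-{\rm i}tH}=\sigma_t(D)$ on a basis vector $\epsilon_{D'}$ for $D$ connected. Computing, $e^{{\rm i}tH}\,D\,e^{-{\rm i}tH}\epsilon_{D'}=e^{-{\rm i}t\cN(D')}e^{{\rm i}t\cN(D\cdot D')}\epsilon_{D\cdot D'}$, and the additivity $\cN(D\cdot D')=\cN(D)+\cN(D')$ collapses this to $e^{{\rm i}t\cN(D)}\epsilon_{D\cdot D'}=\sigma_t(D)\epsilon_{D'}$. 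The point is that additivity is exactly what makes the spurious $\cN(D')$-dependence cancel, reducing covariance to the defining property of $\cN$. Here $H$ is a densely defined self-adjoint operator with $\{\epsilon_D\}$ as an eigenbasis (self-adjointness from $\cN$ being real-valued), which is all that the spectral functional calculus producing $e^{{\rm i}tH}$ and $e^{-\beta H}$ requires.

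Finally, evaluating the trace diagonally gives $Z(\beta)=\Tr\big(e^{-\beta H}\big)=\sum_{D\in\cD}\langle\epsilon_D,e^{-\beta H}\epsilon_D\rangle=\sum_{D\in\cD}e^{-\beta\cN(D)}$, and grouping dessins by the (discrete) values of the invariant yields $Z(\beta)=\sum_{n}\#\{D\in\cD:\cN(D)=n\}\,q^n$ with $q=e^{-\beta}$, which is precisely the generating function enumerating dessins weighted by $\cN$. The main obstacle is not in any of these algebraic manipulations but in the analytic status of this series: as the paper already stresses (cf.\ Remark~\ref{Zdeg}), the multiplicities $\#\{D:\cN(D)=n\}$ typically grow super-exponentially, so $e^{-\beta H}$ need not be trace class for any $\beta$. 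I would therefore phrase the identity at the level of formal generating (or Dirichlet) series rather than of convergent functions, noting that a bona fide equilibrium statistical mechanics requires either restricting to a subsemigroup with controlled growth or applying the regularization of Section~\ref{RegSysSec}.
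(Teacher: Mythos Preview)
Your proposal is correct and follows essentially the same approach as the paper: the core step in both is the covariance computation $e^{{\rm i}tH}\,D\,e^{-{\rm i}tH}\epsilon_{D'}=e^{-{\rm i}t\cN(D')}e^{{\rm i}t\cN(D\cdot D')}\epsilon_{D\cdot D'}=\sigma_t(D)\epsilon_{D'}$ via additivity, followed by the diagonal trace giving $Z(\beta)=\sum_{\lambda}\#\{D:\cN(D)=\lambda\}e^{-\beta\lambda}$. You are simply more thorough than the paper, which omits the explicit verification that $\sigma_t$ is a well-defined algebra automorphism and relegates the formal-versus-convergent caveat to a remark after the proof rather than incorporating it as you do.
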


\begin{proof} We have ${\rm e}^{{\rm i}t H} D {\rm e}^{-{\rm i}tH} \epsilon_{D'} = {\rm e}^{-{\rm i}t \cN(D')} {\rm e}^{{\rm i}t \cN(D\cdot D')} \epsilon_{D\cdot D'}
= \sigma_t(D) \epsilon_{D'}$, since $\cN(D\cdot D')=\cN(D)+\cN(D')$. The partition function is given by
\begin{gather}\label{ZetaND}
 Z(\beta)=\Tr\big({\rm e}^{-\beta H}\big) = \sum_{\lambda\in \cN(\cD)} \#\{ D\in \cD\,|\, \cN(D)= \lambda\} {\rm e}^{-\beta \lambda}.
\end{gather}
With the change of variables $t={\rm e}^{-\beta}$, this can be identified with the generating function for
the counting of dessins $D\in \cD$ with assigned value of $\cN(D)$.
\end{proof}

Note that in \eqref{ZetaND}, in order to identify the series with the partition function of the
quantum statistical mechanical system, one assumes that the invariant $\cN$ is such that
the series is convergent for sufficiently large $\beta>0$, while the generating function can
be regarded more generally as a formal power series.

\begin{cor}\label{timevHopf}
If the additive invariant $\cN$ of dessins d'enfant also satisfies $\cN(D)=\cN(\delta)+\cN(D/\delta)$,
for any subdessin $\delta$ and quotient dessin $D/\delta$, then the time evolution $\sigma_t$ of
Lemma~{\rm \ref{addtevol}} is a one-parameter family of Hopf algebra homomorphisms of~$\cH_{D,\C}$.
Moreover, the representation on $\ell^2(\cD)$ induced by the adjoint action of the Hopf algebra on itself
$D\colon D'\mapsto \sum \delta' D' S(\delta'')$ is also covariant with respect to this time evolution.
\end{cor}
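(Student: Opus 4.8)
The statement of Corollary~\ref{timevHopf} makes two claims, and I would prove them in sequence. First, that each $\sigma_t$ is a Hopf algebra homomorphism of $\cH_{\cD,\C}$ under the hypothesis $\cN(D)=\cN(\delta)+\cN(D/\delta)$; second, that the adjoint representation on $\ell^2(\cD)$ is covariant with respect to $\sigma_t$. For the first claim, recall from Lemma~\ref{addtevol} that $\sigma_t(D)={\rm e}^{{\rm i}t\cN(D)}D$ on connected dessins, extended multiplicatively to monomials. Compatibility with the product is immediate, since $\cN$ is additive on components: $\sigma_t(D_1\cdot D_2)={\rm e}^{{\rm i}t(\cN(D_1)+\cN(D_2))}D_1 D_2=\sigma_t(D_1)\sigma_t(D_2)$. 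The content of the first claim is therefore compatibility with the coproduct~\eqref{HDcoprodB}.

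**Compatibility with $\Delta$.** The key step is to verify $\Delta\circ\sigma_t=(\sigma_t\otimes\sigma_t)\circ\Delta$. Applying $\Delta$ to $\sigma_t(D)={\rm e}^{{\rm i}t\cN(D)}D$ gives
\begin{gather*}
\Delta(\sigma_t(D))={\rm e}^{{\rm i}t\cN(D)}\Big(D\otimes 1+1\otimes D+\sum_{\delta\in\cB(D)}\delta\otimes D/\delta\Big),
\end{gather*}
whereas
\begin{gather*}
(\sigma_t\otimes\sigma_t)\Delta(D)=D\otimes 1+1\otimes D+\sum_{\delta\in\cB(D)}{\rm e}^{{\rm i}t(\cN(\delta)+\cN(D/\delta))}\,\delta\otimes D/\delta.
\end{gather*}
The two primitive terms match provided $\cN$ vanishes on the unit (which holds as $\cN$ is additive, so $\cN(1)=0$). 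For each strongly balanced subdessin $\delta$, the matching of the corresponding terms reduces exactly to the identity ${\rm e}^{{\rm i}t\cN(D)}={\rm e}^{{\rm i}t(\cN(\delta)+\cN(D/\delta))}$, which is precisely the hypothesis $\cN(D)=\cN(\delta)+\cN(D/\delta)$. Compatibility with the counit is trivial and compatibility with the antipode follows formally for any bialgebra homomorphism, as used already in the proof of Proposition~\ref{GHopfact}.

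**Covariance of the adjoint representation.** For the second claim I would mimic the computation in Lemma~\ref{addtevol}, but now for the twisted action $D\cdot_\Delta\epsilon_{D'}=\epsilon_{\sum\delta' D' S(\delta'')}$. The assertion is that ${\rm e}^{{\rm i}tH}(D\cdot_\Delta(\,\cdot\,)){\rm e}^{-{\rm i}tH}=\sigma_t(D)\cdot_\Delta(\,\cdot\,)$ on $\ell^2(\cD)$, where $H\epsilon_D=\cN(D)\epsilon_D$. I would compute ${\rm e}^{{\rm i}tH}\,D\cdot_\Delta{\rm e}^{-{\rm i}tH}\epsilon_{D'}={\rm e}^{-{\rm i}t\cN(D')}{\rm e}^{{\rm i}t\cN(\sum\delta'D'S(\delta''))}\epsilon_{\sum\delta'D'S(\delta'')}$ and reduce the phase on each monomial $\delta'D'S(\delta'')$. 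Here the decisive point is that $\cN$ evaluated on such a monomial equals $\cN(\delta')+\cN(D')+\cN(S(\delta''))$ by additivity, and that the antipode preserves the grading so that $\cN(S(\delta''))=\cN(\delta'')$; combined with $\cN(\delta')+\cN(\delta'')=\cN(D)$ coming again from the hypothesis applied to the splitting $\Delta(D)=\sum\delta'\otimes\delta''$, the phase collapses to ${\rm e}^{{\rm i}t\cN(D)}$, which is exactly the scalar in $\sigma_t(D)$.

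**Main obstacle.** The genuinely delicate step is the second one: controlling how $\cN$ interacts with the antipode and with the inductively defined terms $S(\delta'')=-\delta''+\sum S(\delta_1'')\delta_2''$ appearing in the adjoint action. One must check that additivity together with the grading-compatibility $\cN(D)=\cN(\delta)+\cN(D/\delta)$ is enough to force $\cN(S(\delta''))=\cN(\delta'')$ on every term produced by the recursion, and that the surviving monomials in $\sum\delta'D'S(\delta'')$ all carry the same total phase $\cN(D)+\cN(D')$. The cleanest way to organize this is to observe that $\cN$, being additive and satisfying the subquotient relation, extends to a grading of $\cH_\cD$ in the sense of Lemma~\ref{HDgrading}, so that $\sigma_t$ is simply the one-parameter group of grading automorphisms ${\rm e}^{{\rm i}t\,\deg}$; covariance of the adjoint action then follows because the adjoint action of a graded Hopf algebra preserves the total degree, and $H$ is the degree operator on $\ell^2(\cD)$.
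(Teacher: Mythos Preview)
Your proposal is correct and follows essentially the same argument as the paper: verify that $\sigma_t$ is a bialgebra map by using $\cN(D)=\cN(\delta)+\cN(D/\delta)$ on each term of the coproduct (with antipode compatibility then automatic), and deduce covariance of the adjoint action from $\cN(S(\delta''))=\cN(\delta'')$ together with $\cN(\delta')+\cN(\delta'')=\cN(D)$. One small slip to fix in your write-up: in the displayed expression for $(\sigma_t\otimes\sigma_t)\Delta(D)$ the primitive terms should also carry the phase ${\rm e}^{{\rm i}t\cN(D)}$, since $\sigma_t(D)\otimes\sigma_t(1)={\rm e}^{{\rm i}t\cN(D)}D\otimes 1$; your subsequent remark about $\cN(1)=0$ shows you have the right computation in mind.
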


\begin{proof} It suffices to check that $\sigma_t(D)={\rm e}^{{\rm i}t\cN(D)} D$ is a bialgebra homomorphism, since the compatibility with the antipode $\sigma_t(S(D))=S(\sigma_t(D))$ is then automatically satisfied.
We have $\Delta(\sigma_t(D))={\rm e}^{{\rm i}t\cN(D)}\Delta(D)={\rm e}^{{\rm i}t\cN(\delta)}
{\rm e}^{{\rm i}t\cN(D/\delta)} \sum \delta\otimes D/\delta$. The compatibility with the antipode, which is
a linear antiautomorphism, gives $\sigma_t(S(D))={\rm e}^{{\rm i}t\cN(D)} S(D)$, hence under the action
$D\cdot_\Delta \epsilon_{D'}:=\epsilon_{\sum \delta' D' S(\delta'')}$ we have
${\rm e}^{{\rm i}t H} D {\rm e}^{-{\rm i}tH} \cdot_\Delta \epsilon_{D'} =\sum {\rm e}^{{\rm i}t \cN(D')} {\rm e}^{{\rm i}t \cN(\delta')} {\rm e}^{{\rm i}t \cN(\delta'')} {\rm e}^{-{\rm i}t \cN(D')}
\epsilon_{\delta' D' S(\delta'')} ={\rm e}^{{\rm i}t\cN(D)} \epsilon_{\sum \delta' D' S(\delta'')} =\sigma_t(D)\cdot_\Delta \epsilon_{D'} $, using again the property that $\cN(D)=\cN(\delta')+\cN(\delta'')$ for all the terms in the coproduct.
\end{proof}

Suppose given a finite set of integer-valued additive invariants $\cN=(\cN_1,\ldots,\cN_k)$,
with $\cN_i(D)\in \Z_+$ for all $D\in \cD$ and all $i=1,\ldots, k$. Let $\lambda=(\lambda_1,\ldots,\lambda_k)$ be a
chosen set of $\lambda_i\in \R^*_+$ that are linearly independent over $\Q$. Consider the $\R^*_+$-valued
additive invariant $\cN_\lambda(D)=\lambda_1 \cN_1(D)+\cdots + \lambda_k \cN_k(D)$, and the time
evolution $\sigma_t$ determined by $\cN_\lambda$ on $\cH_{D,\C}$ as in Lemma~\ref{addtevol}.
If each $\cN_i$ also satisfies $\cN_i(D)=\cN_i(\delta)+\cN_i(D/\delta)$ for all sub-dessins then $\sigma_t$
is also a Hopf algebra homomorphism as in Corollary~\ref{timevHopf}.

\begin{lem}\label{ZbetaNlambda}
For a given set of additive invariants
$\cN=(\cN_1,\ldots,\cN_k)$ and a choice of coefficients $\lambda=(\lambda_1,\ldots,\lambda_k)$
as above, the partition function of the time evolution $\sigma_t$ computes the generating functions of
dessins d'enfant with fixed invariants $\cN_i$ for $i=1,\ldots, k$.
\end{lem}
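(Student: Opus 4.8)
Lemma~\ref{ZbetaNlambda} asserts that, given a tuple of additive invariants $\cN=(\cN_1,\dots,\cN_k)$ with $\Q$-linearly-independent positive real coefficients $\lambda=(\lambda_1,\dots,\lambda_k)$, the partition function of the time evolution $\sigma_t$ generated by $\cN_\lambda=\sum_i \lambda_i \cN_i$ computes the multivariable generating function enumerating dessins by the joint values of the $\cN_i$. This is essentially a multivariable bookkeeping refinement of Lemma~\ref{addtevol}, where the single-invariant case was already handled. The key new ingredient is that linear independence over $\Q$ lets a single real eigenvalue $\cN_\lambda(D)$ record the entire vector $(\cN_1(D),\dots,\cN_k(D))$ without collision.

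**Plan of proof.** The plan is to apply Lemma~\ref{addtevol} directly to the single $\R^*_+$-valued additive invariant $\cN_\lambda$. First I would observe that $\cN_\lambda$ is indeed additive on connected components, since each $\cN_i$ is and a $\Q$-linear (indeed $\R$-linear) combination of additive invariants is additive: $\cN_\lambda(D\cdot D')=\sum_i\lambda_i(\cN_i(D)+\cN_i(D'))=\cN_\lambda(D)+\cN_\lambda(D')$. Hence by Lemma~\ref{addtevol} the formula $\sigma_t(D)={\rm e}^{{\rm i}t\cN_\lambda(D)}D$ defines a time evolution implemented on $\ell^2(\cD)$ by $H\epsilon_D=\cN_\lambda(D)\epsilon_D$, with partition function
\begin{gather*}
Z(\beta)=\Tr\big({\rm e}^{-\beta H}\big)=\sum_{\mu\in\cN_\lambda(\cD)}\#\{D\in\cD\,|\,\cN_\lambda(D)=\mu\}\,{\rm e}^{-\beta\mu}.
\end{gather*}

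**The role of linear independence.** The essential step is then to rewrite this single-variable sum as a genuine multivariable generating function. Here I would use that $\lambda_1,\dots,\lambda_k$ are linearly independent over $\Q$, so that the map $(n_1,\dots,n_k)\mapsto \sum_i\lambda_i n_i$ is injective on $\Z_+^k$ (since the $\cN_i$ take values in $\Z_+$). Consequently the level set $\{D\,|\,\cN_\lambda(D)=\mu\}$ decomposes: a value $\mu$ in $\cN_\lambda(\cD)$ arises from \emph{at most one} integer vector $(n_1,\dots,n_k)$, namely the unique tuple with $\sum_i\lambda_i n_i=\mu$, and the dessins contributing to that level are exactly those with $\cN_i(D)=n_i$ for all $i$. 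Reindexing the sum by these vectors and setting $q_i={\rm e}^{-\beta\lambda_i}$ yields
\begin{gather*}
Z(\beta)=\sum_{(n_1,\dots,n_k)\in\Z_+^k}\#\{D\in\cD\,|\,\cN_i(D)=n_i,\ i=1,\dots,k\}\prod_{i=1}^k q_i^{\,n_i},
\end{gather*}
which is precisely the generating function enumerating dessins with the $k$ invariants $\cN_i$ fixed, tracked by the formal variables $q_i$. I would close by noting the convergence caveat already flagged after Lemma~\ref{addtevol}: the identification of $Z(\beta)$ with the generating function is an equality of formal series in the $q_i$, valid as a numerical identity whenever the counting function grows slowly enough that the series converges for large $\beta$.

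**Main obstacle.** The proof is short because all the analytic work is inherited from Lemma~\ref{addtevol}; the only genuine content is the injectivity argument of the previous paragraph, and that is where I expect the subtlety to lie. The point one must be careful about is that the $\Q$-linear independence of the $\lambda_i$ is exactly what prevents two distinct invariant-vectors from collapsing to the same energy eigenvalue and thereby being erroneously summed together; without it, $Z(\beta)$ would aggregate level sets and fail to resolve the individual $\cN_i$. Everything else — additivity, the form of $H$, and the covariance/Hopf-compatibility when each $\cN_i$ also splits over subdessins as in Corollary~\ref{timevHopf} — follows formally from the single-invariant results already established.
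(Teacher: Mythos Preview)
Your proposal is correct and follows essentially the same approach as the paper: apply Lemma~\ref{addtevol} to the single additive invariant $\cN_\lambda$, then use $\Q$-linear independence of the $\lambda_i$ to split each level set $\{\cN_\lambda(D)=\mu\}$ into a unique joint level set $\{\cN_i(D)=n_i\}$, and substitute $q_i={\rm e}^{-\beta\lambda_i}$ to recover the multivariable generating function. The paper's proof is the same argument, written a bit more tersely (and with the substitution written as $t_i={\rm e}^{-\lambda_i}$).
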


\begin{proof} We have
\begin{gather*} Z(\beta)=\Tr\big({\rm e}^{-\beta H}\big)=\sum_{\alpha \in \cN_\lambda(\cD)}
\#\{ D \in \cD \,|\, \cN_\lambda(D)=\alpha \} {\rm e}^{-\beta \alpha}, \end{gather*}
where $\alpha\in \cN_\lambda(\cD)$ means that $\alpha=\sum_i \lambda_i n_i$ with $n_i\in \cN_i(\cD)$.
Since the $\lambda_i$ are linearly independent over $\Q$, this determines the $n_i$ and we can write
the sum above as
\begin{gather*} \sum_{n_1,\ldots,n_k} \#\{ D\in \cD\,|\, \cN_i(D)=n_i \} {\rm e}^{-\beta \lambda_1 n_1} \cdots
{\rm e}^{-\beta \lambda_k n_k}. \end{gather*}
Upon setting $t_i={\rm e}^{-\lambda_i}$, we identify this series with the generating function for
dessins with fixed values of the invariants $\cN_i(D)$ for $i=1,\ldots, k$.
\end{proof}

In particular, we see from this simple general fact that we can reinterpret as partition functions
the generating functions of \cite{KaZo} and \cite{Zog} for the number of dessins with assigned
ramification profile at $\infty$ and given number of preimages of $0$ and $1$, as well as the
generating function of \cite{HaKraLe} of dessins of genus $g$
with assigned ramification profiles over $0$ and $\infty$.

While this system recovers the correct partition function that encodes the counting
problem for dessins, since the representation of the algebra $D\cdot \epsilon_{D'}=\epsilon_{D\cdot D'}$
on the Hilbert space is a~simple translation of the basis elements, we do not have an interesting
class of low temperature KMS states, unlike the case we discussed in the previous subsections.

\subsection{Fibered product structure on dessins}\label{FiberSec}

This subsection together with the subsequent Sections~\ref{ArrangeSec}
and~\ref{MultiinvSec} contain a short digression on a construction
of dynamics and partition functions based on different product
structures on dessins.

We describe here a possible variant of the construction presented in the previous
subsection, where instead of considering the (Hopf) algebra $\cH_\cD$ of not
necessarily connected dessins, where the multiplication is given by the disjoint
union, we consider a commutative algebra of dessins based on a different
product structure built using the fibered product of the Belyi maps. The resulting
construction of an associated quantum statistical mechanical system is similar
to the previous case, but the associated partition function will have here the
structure of a Dirichlet series rather than the usual power series generating
function for the counting of dessins.

A product operation on dessins induced by the fibered product of the Belyi maps
was discussed in~\cite{MaZo}.
Let $D_1$ and $D_2$ be two dessins with $f_i\colon X_i\to \P^1(\C)$ the associated Belyi maps.
Let $Y$ denote the desingularization of the fibered product $\tilde Y=X_1 \times_{\P^1(\C)} X_2$,
fibered along the Belyi maps~$f_i$, and let $f\colon Y\to \P^1(\C)$ denote the resulting branched covering map. Consider the graph~$D$ given by the preimage $f^{-1}(\cI)$ in~$Y$. The graph~$D$ can be combinatorially
described in terms of~$D_1$ and~$D_2$, with bipartite set of vertices
$V_0(D)=V_0(D_1)\times V_0(D_2)$ and $V_1(D)=V_1(D_1)\times V_1(D_2)$ and with set of edges
given by all pairs of edges $(e_1,e_2)\in E(D_1)\times E(D_2)$ with endpoints in $V(D)$. We denote
the fibered product of dessins by $D= D_1 \star D_2$. We correspondingly write $f=f_1\star f_2$
for the fibered product of the Belyi maps. Under this fibered product operations, the degree is multiplicative
$d=d_1 d_2$, and so are the ramifications $m=\# V_0(D)=m_1 m_2$ and $n=\# V_1(D)=n_1 n_2$
and the ramification profiles $\mu_i =\mu_{i,1} \mu_{i,2}$ and $\nu_j =\nu_{j,1} \nu_{j,2}$.

Let $\cA_\Q$ be the algebra over $\Q$ generated by the dessins $D$ with the fibered product as above.
Equivalently we think of elements of $\cA_\Q$ as functions with
finite support $a\colon \cD \to \Q$ from the set~$\cD$ of dessins, with the convolution product
$a_1 \star a_2 (D) =\sum\limits_{D = D_1 \star D_2} a_1(D_1) a_2(D_2)$.
The resulting convolution algebra $\cA_\Q$ is commutative.
We let $\cA_\C=\cA_\Q\otimes_\Q \C$ be the complex algebra obtained by change of coefficients.
The generators of the algebra $\cA_\Q$ are those dessins $D$ that admit no non-trivial
fibered product decomposition $D=D_1\star D_2$, which we refer to as
``indecomposible dessins".

\subsection{Arrangements and semigroup laws}\label{ArrangeSec}

We now consider the commutative semigroups $\cS:=\oplus_{n\geq 1} \N^n$ and $\cS\oplus \cS$,
with the product of $(\mu,\nu)\in \N^m\oplus \N^n$ and $(\mu',\nu')\in \N^{m'}\oplus \N^{n'}$ given
by $(\mu\mu',\nu\nu')$ with $(\mu\mu'=(\mu_i \mu'_{i'})_{(i,i')},\nu\nu'=(\nu_j\nu'_{j'})) \in \N^{mm'}\oplus\N^{nn'}$.
We restrict this semigroup law to the arrangements $\cH(m,n)$ considered in \cite{HaKraLe}.

\begin{lem}\label{subsemiH}
Given $m,n\in \N$ let $\cH(m,n)=\Big\{ (\mu,\nu)\in \N^m\oplus \N^n\,|\,
\sum\limits_{i=1}^m \mu_i=\sum\limits_{j=1}^n \mu_j \Big\}$. The arrangements $\cH(m,n)$
determine a subsemigroup $\cH_{deg}\subset \cS\otimes \cS$.
\end{lem}

\begin{proof} With $\cH_{deg}=\oplus_{m,n} \cH(m,n)\subset \cS\otimes \cS$ we see that
for $(\mu,\nu)\in \cH(m,n)$ and $(\mu'\nu')\in \cH(m',n')$ we have
$(\mu\mu',\nu\nu')\in \cH(mm',nn')$ with $\sum_{(i,i')} \mu_i \mu'_{i'}= d\cdot d' =
\sum_{(j,j')} \nu_j \nu'_{j"}$, where $\sum_i \mu_i =d=\sum_j\nu_j$ and
$\sum_{i'}\mu'_{i'}=d'=\sum_{j'} \nu'_{j'}$.
\end{proof}

This simple fact shows that the data of the ramification profiles at two of the three
ramification points, say at $0$ and $\infty$, of the dessins can be arranged as a
multiplicative semigroup structure. This semigroup operation is consistent with the
algebra operation in $\cA_\Q$ given by the fibered product.

\begin{lem}\label{fiberinvs} The degree, the ramification profiles and the ramification
indices over the points $\{0,1,\infty\}$ are multiplicative with respect to the fibered product operation of Belyi functions $f\colon \Sigma\to \P^1$. If $\mu=(\mu_1,\ldots,\mu_m)$
and $\nu=(\nu_1,\ldots,\nu_n)$ are the ramification profiles over $0$ and $\infty$,
then the ramification index $r$ at $1$ satisfies $r=\chi(\Sigma)-\chi(D)$, where
$\chi$ is the topological Euler characteristic.
\end{lem}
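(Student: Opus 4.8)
The plan is to split the statement into its two parts and prove them by different tools: the multiplicativity of the degree and of the ramification data at the two branch points that index the black and white vertices of the dessin, which I would read off directly from the combinatorial description of $D=D_1\star D_2$ recalled in this subsection, and the identity $r=\chi(\Sigma)-\chi(D)$ for the remaining branch point, which I would obtain from the CW structure that the dessin induces on $\Sigma$.

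First I would treat the degree. Over a regular value $p\in\P^1(\C)$ the fiber of the desingularized fibered product $f\colon Y\to\P^1(\C)$ is canonically $f_1^{-1}(p)\times f_2^{-1}(p)$, since desingularization leaves the covering unchanged away from the branch locus; hence $\deg f=d_1d_2$, that is $d=d_1d_2$. For the two points indexing the vertices I would use the identifications $V_0(D)=V_0(D_1)\times V_0(D_2)$ and $V_1(D)=V_1(D_1)\times V_1(D_2)$ together with the fact that the edges of $D$ are the compatible pairs $(e_1,e_2)\in E(D_1)\times E(D_2)$. The edges incident to a product vertex $(v_1,v_2)$ are exactly the pairs of edges incident to $v_1$ and to $v_2$, so its valence is the product of the two valences. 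This shows simultaneously that the number of vertices of each colour multiplies, $m=m_1m_2$ and $n=n_1n_2$, and that the ramification profile over each of these two points is the product profile $(\mu_{i,1}\mu_{j,2})_{i,j}$, resp. $(\nu_{i,1}\nu_{j,2})_{i,j}$, in the semigroup sense of Section~\ref{ArrangeSec}.

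Next I would establish the formula for the third branch point. The dessin $D=f^{-1}(\cI)$ endows $\Sigma$ with a CW structure whose $0$-cells are the $\#V(D)$ preimages of the two endpoints, whose $1$-cells are the $\#E(D)=d$ connected components of the preimage of the open segment, and whose $2$-cells are the connected components of the complement, each of which is a disk containing exactly one preimage of the third branch point. Thus the number of $2$-cells equals the number $r$ of preimages of that point, and Euler's formula $\chi(\Sigma)=\#V(D)-\#E(D)+\#F(D)$ becomes $\chi(\Sigma)=\chi(D)+r$ with $\chi(D)=\#V(D)-\#E(D)$, which is precisely $r=\chi(\Sigma)-\chi(D)$. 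Combined with the multiplicativity of $d$, $m$, $n$ already obtained and the Riemann--Hurwitz relation $\chi(\Sigma)=-d+m+n+r$ used earlier, this pins down $r$ from the multiplicative data together with the genus.

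The step I expect to be the main obstacle is reconciling the naive combinatorial product of vertices with the actual geometry of the \emph{desingularized} fibered product at points where both factors ramify. When the local indices $\mu_{i,1}$ and $\mu_{j,2}$ are not coprime, the analytic branches of $\tilde Y=X_1\times_{\P^1}X_2$ through $(v_1,v_2)$ number $\gcd(\mu_{i,1},\mu_{j,2})$ rather than one, each with local index $\mathrm{lcm}(\mu_{i,1},\mu_{j,2})$, so a single product vertex splits after normalization. I would control this by working with the rotation permutations: the product dessin carries $\sigma_0^{(1)}\times\sigma_0^{(2)}$ and $\sigma_1^{(1)}\times\sigma_1^{(2)}$, whose cycle structures, and that of the induced face permutation $(\sigma_0\sigma_1)^{-1}$, reproduce exactly this $\gcd/\mathrm{lcm}$ splitting, so that the vertex, edge and face counts computed from the combinatorial description agree with those of $Y$. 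Verifying this compatibility of the ribbon structures is the one genuinely non-formal point; once it is in place, the multiplicativity at all three points and the Euler-characteristic formula follow as above.
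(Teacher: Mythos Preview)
Your argument is correct and close to the paper's. For the identity $r=\chi(\Sigma)-\chi(D)$ the paper invokes Riemann--Hurwitz, writing $\chi(\Sigma)=2d-\sum_P(e_P-1)=-d+m+n+r$ and then observing $-d+m+n=\#V(D)-\#E(D)=\chi(D)$; you instead read the same identity off the CW decomposition of $\Sigma$ determined by $D$, identifying the number of $2$-cells with the number $r$ of preimages of the third branch point. The two derivations are equivalent---your face count is precisely what makes Riemann--Hurwitz hold in this situation---so the difference is one of packaging rather than substance.

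For the multiplicativity of degree, ramification indices and profiles, the paper simply refers back to the combinatorial description of $D_1\star D_2$ in Section~\ref{FiberSec} without further comment. Your concern about the desingularization at points where both factors ramify with non-coprime local indices is well placed: over such a pair the normalized fibered product has $\gcd(\mu_{i,1},\mu_{j,2})$ branches, each of index $\mathrm{lcm}(\mu_{i,1},\mu_{j,2})$, not a single branch of index $\mu_{i,1}\mu_{j,2}$, and the permutation description via $\sigma_0^{(1)}\times\sigma_0^{(2)}$ and $\sigma_1^{(1)}\times\sigma_1^{(2)}$ confirms exactly this. Thus the literal ``product profile'' $(\mu_{i,1}\mu_{j,2})_{i,j}$ is only correct under a coprimality hypothesis that the paper does not state; your proposed verification through the rotation permutations is the right tool to compute the actual profile on $Y$. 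In this respect your treatment is more careful than the paper's own proof, while also revealing that the multiplicativity claim for profiles, taken at face value, requires that extra assumption.
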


\begin{proof} We have already seen in the construction of the fibered product
dessin $D=D_1\star D_2$ in the previous subsection that the ramification
indices and the ramification profiles behave multiplicatively. By the Riemann--Hurwitz
formula the Euler characteristic $\chi(\Sigma)$ satisfies
\begin{gather*} \chi(\Sigma)=d\cdot \chi\big(\P^1\big) - \sum_{P\in\Sigma} (e_P-1), \end{gather*}
where the sum is taken over the ramification points with the corresponding
ramification index, so we have
\begin{gather*} \chi(\Sigma)=2 d + \sum_{i=1}^m (\mu_i -1) + \sum_{j=1}^n (\nu_j-1)
+ \sum_{k=1}^r (\rho_k-1) = -d +m +n +r, \end{gather*}
where $\rho=(\rho_k)_{k=1}^r$ is the ramification profile over the point $1$.
Since $m+n=\# V(D)$ and $d=\# E(D)$ we have $-d+m+n=\chi(D)$.
\end{proof}

The relation between the ramification index $r$ and the Euler characteristics
$\chi(\Sigma)$ and $\chi(D)$ shows that the enumeration of dessins with fixed
$\mu$, $\nu$, $g$ as in \cite{HaKraLe} can be reformulated as the enumeration of
dessins with fixed $\mu$, $\nu$, $r$. The advantage of this formulation is that the
assignment of the data $(\mu,\nu,r)$ is multiplicative with respect to the
fibered product operation on Belyi functions.

\subsection{Multiplicative invariants and partition function}\label{MultiinvSec}

Let $\Upsilon$ be a multiplicative invariant of dessins d'enfant, that is, an
invariant with the property that $\Upsilon(D\star D')=\Upsilon(D) \star \Upsilon(D')$,
where $D\star D'$ is the fibered product. We assume that $\Upsilon$ takes values in
a group, a semigroup, or an algebra.

As in the previous subsections, we consider the Hilbert space $\ell^2(\cD)$.
The algebra $\cA_\Q$ acts by bounded operators with
$D\cdot \epsilon_{D'}=\epsilon_{D\star D'}$.

\begin{lem}\label{evolUpsilon}
An $\N$-valued multiplicative invariant $\Upsilon$ of dessins d'enfant determines
a time evolution on the algebra $\cA_\Q\otimes_\Q \C$ of the form
$\sigma_t(D)=\Upsilon(D)^{{\rm i}t} D$, implemented by the Hamiltonian
$H \epsilon_D =\log(\Upsilon(D)) \epsilon_D$. The partition function of this
quantum statistical mechanical system is a~Dirichlet series with coefficients
enumerating dessins with assigned invariant~$\Upsilon(D)$.
\end{lem}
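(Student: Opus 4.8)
The plan is to follow the same template as the additive case treated in Lemma~\ref{addtevol} and Corollary~\ref{timevHopf}, replacing the additive bookkeeping by the multiplicative one carried by $\Upsilon$ under the fibered product $\star$. Since $\Upsilon$ is $\N$-valued, each $\log\Upsilon(D)$ is a nonnegative real number and $\Upsilon(D)^{{\rm i}t}={\rm e}^{{\rm i}t\log\Upsilon(D)}$ has modulus one, so both $\sigma_t(D)=\Upsilon(D)^{{\rm i}t}D$ and $H\epsilon_D=\log(\Upsilon(D))\epsilon_D$ are well defined.

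First I would check that $\sigma$ is a one-parameter group of automorphisms of $\cA_\Q\otimes_\Q\C$. The key input is multiplicativity: from $\Upsilon(D\star D')=\Upsilon(D)\Upsilon(D')$ one gets $\sigma_t(D\star D')=(\Upsilon(D)\Upsilon(D'))^{{\rm i}t}(D\star D')=\sigma_t(D)\star\sigma_t(D')$, so each $\sigma_t$ respects the convolution product, while $\sigma_{t+s}=\sigma_t\circ\sigma_s$ and $\sigma_0={\rm id}$ follow at once from $\Upsilon(D)^{{\rm i}(t+s)}=\Upsilon(D)^{{\rm i}t}\Upsilon(D)^{{\rm i}s}$. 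This is the exact multiplicative analogue of the additivity computation in Lemma~\ref{addtevol}.

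Next I would verify that $H$ implements $\sigma_t$ in the representation on $\ell^2(\cD)$ given by $D\cdot\epsilon_{D'}=\epsilon_{D\star D'}$, which acts by bounded operators as recalled above. Since ${\rm e}^{{\rm i}tH}\epsilon_{D'}=\Upsilon(D')^{{\rm i}t}\epsilon_{D'}$, a direct computation gives
\[
{\rm e}^{{\rm i}tH}D{\rm e}^{-{\rm i}tH}\epsilon_{D'}=\Upsilon(D')^{-{\rm i}t}\Upsilon(D\star D')^{{\rm i}t}\epsilon_{D\star D'}=\Upsilon(D)^{{\rm i}t}\epsilon_{D\star D'}=\sigma_t(D)\epsilon_{D'},
\]
where the middle equality is once more just multiplicativity of $\Upsilon$. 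For the partition function I would then compute the trace on the basis $\{\epsilon_D\}$, obtaining $Z(\beta)=\Tr({\rm e}^{-\beta H})=\sum_{D\in\cD}\Upsilon(D)^{-\beta}=\sum_{n\geq 1}\#\{D\in\cD\,|\,\Upsilon(D)=n\}\,n^{-\beta}$, which is manifestly a Dirichlet series whose coefficients enumerate the dessins with the assigned value of $\Upsilon$; it is precisely the multiplicativity of $\Upsilon$ under $\star$ that makes this an Euler-product-type Dirichlet series rather than the power-series generating function of the additive case.

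The computations here are entirely routine, so I expect no genuine conceptual obstacle. The only point that needs care, exactly as in the remark following Lemma~\ref{addtevol}, is that $Z(\beta)$ must in general be regarded as a \emph{formal} Dirichlet series: identifying $\Tr({\rm e}^{-\beta H})$ with an honest convergent trace requires the level-set counts $\#\{D:\Upsilon(D)=n\}$ to grow slowly enough that $\sum_n\#\{D:\Upsilon(D)=n\}\,n^{-\beta}$ converges for sufficiently large $\beta$. As the everywhere-divergence phenomenon of Remark~\ref{Zdeg} shows, this need not hold, in which case one would have to pass to the regularized system of Section~\ref{RegSysSec}; the present lemma asserts only the formal statement, so this is flagged rather than resolved.
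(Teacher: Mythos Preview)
Your proof is correct and follows essentially the same approach as the paper: the core step is the covariance computation ${\rm e}^{{\rm i}tH}D{\rm e}^{-{\rm i}tH}\epsilon_{D'}=\Upsilon(D')^{-{\rm i}t}\Upsilon(D\star D')^{{\rm i}t}\epsilon_{D\star D'}=\sigma_t(D)\epsilon_{D'}$ via multiplicativity, followed by the trace computation giving the formal Dirichlet series. Your version is slightly more thorough in explicitly verifying that $\sigma_t$ is an algebra automorphism and in flagging the formal-versus-convergent issue, but the substance is identical.
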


\begin{proof} We have ${\rm e}^{{\rm i}tH} D {\rm e}^{-{\rm i}tH} \epsilon_{D'}= \Upsilon(D')^{-{\rm i}t} \Upsilon(D\star D')^{{\rm i}t} \epsilon_{D\star D'}=
\sigma_t(D) \epsilon_{D'}$, by the multiplicative property. The partition function is given by the formal
Dirichlet series
\begin{gather}\label{ZetaUpsilon}
Z(\beta)=\Tr\big({\rm e}^{-\beta H}\big) =\sum_{n\geq 1} \# \{ D\in \cD\,|\, \Upsilon(D)=n \} n^{-\beta} . \qquad \qquad \qquad \qquad \qquad   \qed
\end{gather}\renewcommand{\qed}{}
\end{proof}

In particular, since the ramification profiles, ramification indices and degree behave
multiplicatively with respect to the fibered product of dessins, we obtain a partition
function as above, associated to the counting problem of~\cite{HaKraLe}, where the
multiplicative invariants take values in the semigroup $\cH_{\deg}$ of Lemma~\ref{subsemiH}.

\begin{prop}\label{multiZ}
The partition function \eqref{ZetaUpsilon} associated to the counting problem of~{\rm \cite{HaKraLe}} is
given by
\begin{gather}\label{Zetamunu}
\sum_{m,n,r} \sum_{(\mu,\nu)\in \cH(m,n)} h_{g;\mu,\nu} r^{-s} \mu_1^{-s_1}\cdots \mu_m^{-s_m} \nu_1^{-\sigma_1} \cdots \nu_n^{-\sigma_n}
\end{gather}
with the $h_{g;\mu,\nu}$ as in \eqref{hgmunu} and with
$\cH(m,n)=\big\{ (\mu,\nu)\in \N^m\times \N^n\,|\, \sum_i\mu_i = \sum_j\nu_j \big\}$.
\end{prop}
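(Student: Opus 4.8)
The plan is to realize the series \eqref{Zetamunu} as the trace of a Gibbs factor for a multiplicative invariant valued in the semigroup $\cH_{\deg}$, in exact parallel with Lemma~\ref{evolUpsilon}, where the single $\N$-valued invariant is now replaced by the full ramification data. First I would take as the multiplicative invariant the assignment $\Upsilon(D)=(\mu,\nu,r)$ recording the ramification profiles $\mu=(\mu_1,\dots,\mu_m)$ and $\nu=(\nu_1,\dots,\nu_n)$ over $0$ and $\infty$ together with the ramification index $r$ over $1$, working throughout within the genus-$g$ sector of $\cD$. By Lemma~\ref{fiberinvs} all of these quantities are multiplicative under the fibered product $\star$, so that $(\mu,\nu)$ takes values in the semigroup $\cH_{\deg}$ of Lemma~\ref{subsemiH} and $\Upsilon$ is a bona fide multiplicative invariant. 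Assigning one formal inverse-temperature variable to each coordinate --- $s$ to $\log r$ and $s_1,\dots,s_m$, $\sigma_1,\dots,\sigma_n$ to the $\log\mu_i$, $\log\nu_j$ --- the corresponding (multi-parameter) Hamiltonian acts diagonally on the orthonormal basis $\{\epsilon_D\}$ of $\ell^2(\cD)$, so that the Gibbs factor multiplies $\epsilon_D$ by the monomial $r^{-s}\mu_1^{-s_1}\cdots\mu_m^{-s_m}\nu_1^{-\sigma_1}\cdots\nu_n^{-\sigma_n}$ appearing in \eqref{Zetamunu}.

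The main computation is then to evaluate the trace by grouping the basis vectors according to the value of $\Upsilon$. For each fixed tuple $(m,n,\mu,\nu)$ the dessins $D$ contributing to it are exactly the isomorphism classes of genus-$g$ branched coverings $\phi\colon\Sigma\to\P^1$ with ramification profiles $\mu$ and $\nu$ over $0$ and $\infty$; counting these as objects of the groupoid of coverings, i.e.\ with the weight $1/\#\Aut(\phi)$ that is intrinsic to the enumeration of dessins, reproduces precisely the Hurwitz number $h_{g;\mu,\nu}$ of \eqref{hgmunu}. Summing the diagonal matrix elements $\langle\epsilon_D, e^{-\beta H}\epsilon_D\rangle$ over the basis and reorganizing the sum into the strata indexed by $(m,n,r)$ and $(\mu,\nu)\in\cH(m,n)$ then yields
\begin{gather*}
\Tr\big(e^{-\beta H}\big)=\sum_{m,n,r}\sum_{(\mu,\nu)\in\cH(m,n)} h_{g;\mu,\nu}\, r^{-s}\mu_1^{-s_1}\cdots\mu_m^{-s_m}\nu_1^{-\sigma_1}\cdots\nu_n^{-\sigma_n},
\end{gather*}
which is the asserted identity \eqref{Zetamunu}.

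The step I expect to demand the most care is the identification of the multiplicity of the basis vectors with the automorphism-weighted count $h_{g;\mu,\nu}$ rather than a plain cardinality of isomorphism classes: the match relies on reading ``$\#\{D\mid\Upsilon(D)=\cdots\}$'' in the analogue of \eqref{ZetaUpsilon} as a groupoid cardinality, which is exactly what makes the coefficient coincide with \eqref{hgmunu}. A secondary bookkeeping point is that $r$ is not an independent summation index: by the Riemann--Hurwitz relation $r=\chi(\Sigma)-\chi(D)$ of Lemma~\ref{fiberinvs} it is already determined by $g$, $\mu$ and $\nu$, so the factor $r^{-s}$ merely records this constraint and the formal sum over $r$ collapses onto the single admissible value for each $(\mu,\nu)$. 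Finally, as noted before this subsection, the whole identity is to be understood at the level of formal Dirichlet series; genuine convergence for large real parts of the variables would require a regularization of the type carried out in Section~\ref{RegSysSec}.
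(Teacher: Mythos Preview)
Your argument is correct in outline and lands on the same identification as the paper, but the paper reaches~\eqref{Zetamunu} by a slightly different technical route. Rather than introducing a multi-parameter Hamiltonian, the paper stays strictly within the single-invariant framework of Lemma~\ref{evolUpsilon}: it first uses Lemma~\ref{fiberinvs} to trade the genus $g$ for the multiplicative invariant $r$ (so $h_{g;\mu,\nu}=h_{r;\mu,\nu}$), and then packages all the ramification data into one $\R^*_+$-valued multiplicative invariant
\[
\Upsilon_{m,n}(D)=r^{\lambda}\prod_i \mu_i(D)^{\alpha_i}\prod_j \nu_j(D)^{\kappa_j},
\]
with exponents $\lambda,\alpha_i,\kappa_j\in\R^*_+$ chosen generically so that the level sets of this single invariant coincide with fixing each of $r,\mu_i,\nu_j$ separately. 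The single-parameter partition function $Z(\beta)=\sum_D \Upsilon_{m(D),n(D)}(D)^{-\beta}$ then becomes~\eqref{Zetamunu} after the substitution $s=\beta\lambda$, $s_i=\beta\alpha_i$, $\sigma_j=\beta\kappa_j$. Your multi-parameter Hamiltonian is the shortcut that this packaging trick is designed to simulate; both are valid, but the paper's version keeps the construction literally inside Lemma~\ref{evolUpsilon}. Your two caveats are on point: the collapse of the $r$-sum via Riemann--Hurwitz is exactly the paper's first step, and the identification of the multiplicity with the automorphism-weighted count $h_{g;\mu,\nu}$ is something the paper simply writes down without further comment, so your flag there is a genuine point of care rather than a gap in your argument.
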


\begin{proof}First observe that by Lemma~\ref{fiberinvs} we can express the genus $g$ as a function
of the multiplicative invariants $d$, $r$, $m$, $n$, hence we can identify the counting
function $h_{g;\mu,\nu}$ of \eqref{hgmunu} with a function $h_{r;\mu,\nu}$. The piecewise
polynomiality result of \cite{HaKraLe} shows that the $h_{r;\mu,\nu}$ are polynomials
$P_{d,r,C}(\mu,\nu)$ in the variables $(\mu,\nu)$ of degree $2d+1-2r-m-n$,
for all $(\mu,\nu)$ in a fixed chamber $C$ of the arrangement $\cH(m,n)$.
Let $\alpha_1, \ldots, \alpha_m$ and $\kappa_1, \ldots, \kappa_n$ and $\lambda$ be coefficients
in $\R^*_+$ such that, for all integers $r,a_i,b_j >1$ with $\sum_i a_i =\sum_j b_j$,
the products
$r^\lambda a_1^{\alpha_1}\cdots a_m^{\alpha_m} b_1^{\kappa_1}\cdots b_n^{\kappa_n}\neq 1$.
Then given multiplicative invariants $(\mu,\nu)\in \cH_{deg}$ we can form
a~single multiplicative invariant
\begin{gather*} \Upsilon_{m,n}(D):=\prod_{i=1}^m \mu_i(D)^{\alpha_i} \prod_{j=1}^n \nu_j(D)^{\kappa_j}, \end{gather*}
such that $\big\{ D\,|\, \Upsilon_{m,n}(D)= r^\lambda a_1^{\alpha_1}\cdots a_m^{\alpha_m}
b_1^{\kappa_1}\cdots b_n^{\kappa_n}\big\} =\{ D\,|\, \mu_i(D)=a_i, \, \nu_j(D)=b_j \}$.
Given a~sequence $\{ \alpha_i,\kappa_j \}$ that satisfies the property above for
given $d$, $m$, $n$ we obtain a time evolution $\sigma_t(D) =\Upsilon_{m(D),n(D)}(D)^{{\rm i}t} D$
on $\cA_\C=\cA_\Q\otimes_\Q \C$ with partition function given by the formal Dirichlet series
\begin{gather*} Z(\beta) = \sum_{D\in \cD} \Upsilon_{m(D),n(D)}(D)^{-\beta}
 =\sum_{r,m,n} h_{r,m,n;\mu,\nu} r^{-\beta\lambda}
\mu_1^{-\beta \alpha_1}\cdots \mu_m^{-\beta \alpha_m} \nu_1^{-\beta \kappa_1} \cdots \nu_n^{-\beta \kappa_n}. \end{gather*}
This agrees with~\eqref{Zetamunu} for $s=\beta\lambda$, $s_i=\beta \alpha_i$ and $\sigma_j=\beta \kappa_j$.
\end{proof}

\section[Bost--Connes crossed product algebras and the Grothendieck--Teichm\"uller groupoid]{Bost--Connes crossed product algebras\\ and the Grothendieck--Teichm\"uller groupoid}\label{IharaSec}

The basic object of combinatorial topology related to the set of Belyi maps
$f\colon \Sigma \to \P^1$, is the fundamental groupoid of $\P^1\setminus \{0,1,\infty\}$
classifying (modulo homotopy) oriented paths between pairs of, say,
algebraic points of $\P^1(\C ) \setminus \{0,1,\infty\}$.

For studying its interactions with the absolute Galois group, it is convenient
to restrict the set of base points to Deligne's base points at $0$, $1$ and $\infty$ that is,
real tangent directions to these points, as explained in~\cite{Ihara1,Ihara}, and earlier,
although in a less explicit form, in~\cite{Dr}.

This groupoid can be visualized via the {\it Grothendieck--Teichm\"uller} group: product of two cyclic sugroups
generated by loops around $0$ and $1$ respectively, and connecting them involution
generated by a path from $0$ to $1$. We will sometimes refer to this involution
as {\em hidden symmetry}, or else {\em Drinfeld--Ihara} involution.

This section is dedicated to the constructions of quantum statistical mechanical
systems associated to the absolute Galois group $G={\rm Gal}(\bar\Q/\Q)$.
They transfer to the Grothendieck--Teichm\"uller environment
versions of the Bost--Connes algebras considered in \cite{Mar}.

We will start with a description of the
combinatorial version of the (profinite) Grothendieck--Teichm\"uller group ${\bf mGT}$
as the automorphism group of the genus zero modular operad, as was done in \cite{ComMan2}.

\subsection{Bost--Connes algebra with Drinfeld--Ihara involution}\label{IharaBCsec}

The group ${\bf mGT}$ was defined in \cite{ComMan2} in the following way.
One starts with the same projective limit as in the Bost--Connes endomotive (\cite[Section~3.3]{CCM},
and \cite[Section~2.1]{Mar}).
Put $X_n={\rm Spec}(\Q[\Z/n\Z])$ with projections $X_n \to X_m$ when $m|n$ ordered
by divisibility, and consider first the limit $X =\varprojlim_n X_s ={\rm Spec}(\Q[\Q/\Z])$.

We then enrich the action of the automorphism group $\hat\Z^*$ on $X$ and on the Bost--Connes
algebra $\Q[\Q/\Z]\rtimes \N$ corresponding to the
actions of the groups $\Z/n\Z^*$ of the $X_n$, with
the further symmetries $\theta_n$ of $X_n$. Concretely,
the map $\theta_n$ is the involution of $X_n=\{ 0, \ldots, n-1 \}$
given by $k\mapsto n-k+1$.

The non-abelian group ${\bf mGT}_n$ is defined as the subgroup
of the symmetric group $S_n$ generated by $\Z/n\Z^*$ and $\theta_n$. One defines the group
${\bf mGT}=\varprojlim_n {\bf mGT}_n$

\begin{lem}\label{invtheta}The involutions $\theta_n\colon X_n \to X_n$ are compatible with the
maps $\sigma_m\colon X_{nm} \to X_n$ of the projective system of the Bost--Connes endomotive, namely $\sigma_m \circ \theta_{nm}=\theta_n \circ \sigma_m$.
They determine an involution $\theta\colon X \to X$ on the projective limit.
\end{lem}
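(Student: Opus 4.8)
The plan is to reduce the statement to an explicit computation with the integer labelings of the finite sets $X_n$ and then invoke the universal property of the projective limit. First I would fix the identification of $X_n=\Spec(\Q[\Z/n\Z])$ with the set $\{0,1,\dots,n-1\}=\Z/n\Z$ used in the statement, realized through the $\bar\Q$-points: the characters $\chi_a\colon \Z/n\Z\to\bar\Q^*$ with $\chi_a(1)=\zeta_n^a$, so that $a\in\Z/n\Z$ labels the point $\chi_a$. Under this labeling the involution of the statement reads $\theta_n(a)\equiv n-a+1\equiv 1-a\pmod n$, and one checks immediately that it is an involution since its affine part $a\mapsto -a+(n+1)$ squares to the identity.

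Next I would pin down the transition maps of the Bost--Connes system $X=\varprojlim_n X_n=\Spec(\Q[\Q/\Z])$. These are dual to the inclusions $\tfrac1n\Z/\Z\hookrightarrow \tfrac1{nm}\Z/\Z$, i.e.\ to the group homomorphism $\Z/n\Z\to\Z/nm\Z$, $1\mapsto m$; pulling a character $\chi_b$ of $\Z/nm\Z$ back along this inclusion sends $1\in\Z/n\Z$ to $\zeta_{nm}^{bm}=\zeta_n^b$, so on the labelings $\sigma_m\colon X_{nm}\to X_n$ is simply reduction modulo $n$, $\sigma_m(b)=b\bmod n$. I would include this short character-pullback verification.

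The core step is then the direct check of $\sigma_m\circ\theta_{nm}=\theta_n\circ\sigma_m$ on $X_{nm}$. Writing $\theta_{nm}(b)\equiv 1-b\pmod{nm}$, one has
\[
\sigma_m(\theta_{nm}(b)) = (nm-b+1)\bmod n = (1-b)\bmod n,
\]
since $nm\equiv 0\pmod n$, while $\theta_n(\sigma_m(b)) = \big(n-(b\bmod n)+1\big)\bmod n = (1-b)\bmod n$; the two sides coincide. The only point to watch is that the affine constant $+1$ in $\theta_n$ survives reduction: this works precisely because the linear part is $-1$ and $nm$ is a multiple of $n$, so no rescaling of the constant occurs. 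This is the entire arithmetic content, and I do not expect it to present any serious obstacle — the lemma is essentially a bookkeeping statement once $\sigma_m$ is correctly identified as reduction mod $n$.

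Finally, having a family $\{\theta_n\}$ of involutions commuting with all the $\sigma_m$, the universal property of $X=\varprojlim_n X_n$ yields a unique self-map $\theta\colon X\to X$ with $\sigma_m\circ\theta=\theta\circ\sigma_m$ and $\theta|_{X_n}=\theta_n$; equivalently $\theta=\varprojlim_n\theta_n$ on the profinite set of geometric points $\hat\Z=\varprojlim_n\Z/n\Z$. Since composition in the limit is computed componentwise and each $\theta_n^2=\id$, we get $\theta^2=\id$, so $\theta$ is the asserted involution. The one subtlety I would flag explicitly is interpretive rather than technical: because $\theta_n(a)\equiv 1-a$ is \emph{not} Galois-equivariant (only $a\mapsto -a$ would be), the $\theta_n$ are genuine symmetries of the finite point-sets $X_n$ rather than $\Q$-algebra automorphisms of $\Q[\Z/n\Z]$; accordingly $\theta$ should be read as an automorphism of the underlying profinite space $X$ of geometric points, which is exactly the extra, non-arithmetic symmetry entering the definition of $\mathbf{mGT}$.
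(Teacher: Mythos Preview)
Your proof is correct and follows essentially the same approach as the paper: a direct verification of the compatibility relation, followed by passage to the limit. The only cosmetic difference is that the paper uses the multiplicative identification of $X_n$ with the $n$-th roots of unity (writing $\theta_n(\zeta)=\zeta_n\cdot\zeta^{-1}$ and $\sigma_m(\zeta)=\zeta^m$), whereas you use the equivalent additive labeling $\theta_n(a)\equiv 1-a$ and $\sigma_m(b)=b\bmod n$; your version is slightly more detailed in that you explicitly spell out the universal-property step for the limit and the involution check, both of which the paper leaves implicit.
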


\begin{proof} If we identify $X_n$ with the set of roots of unity of
order $n$, this involution can be equivalently written as $\theta_n(\zeta)=\zeta_n \cdot \zeta^{-1}$,
where $\zeta_n=\exp(2\pi{\rm i} /n)$. Also,
identifying the set $X_n$ with roots of unity of order $n$, the maps
$\sigma_m\colon X_{nm}\to X_n$ are given by raising to the $m$-th power,
$\sigma_m(\zeta)=\zeta^m$. Thus, we have $\sigma_m( \theta_{nm}(\zeta))=
\zeta_{nm}^m \cdot \big(\zeta^{-1}\big)^m =\theta_n(\zeta^m)=\theta_n(\sigma_m(\zeta))$.
\end{proof}

We refer to the map $\theta\colon X \to X$ as the Drinfeld--Ihara involution of the
Bost--Connes endomotive mentioned above. The following reformulation of this involution in terms of its action on the Bost--Connes algebra then follows directly.

\begin{prop}\label{thetanlem}The involution $\theta\colon X \to X$ described above induces a self-map of the
arithmetic Bost--Connes algebra $\Q[\Q/\Z]\rtimes \N$,
which acts by $\theta(e(r))=e\big(\frac{1-a}{b}\big)$ on generators
$e(r)$ with $r\in \Q/\Z$, $r=a/b$ for integers with $(a,b)=1$.

The group ${\bf mGT}$ acts on the Bost--Connes quantum statistical mechanical
system compatibly with the time evolution and preserving the arithmetic subalgebra.
\end{prop}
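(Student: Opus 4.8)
The plan is to treat the two assertions separately, using Lemma~\ref{invtheta} as the starting point. For the first statement, I would begin from the involution $\theta\colon X\to X$ already produced on the projective limit, and pull it back to a self-map of the coordinate algebra $\Q[\Q/\Z]=\varinjlim_n \Q[\Z/n\Z]$. The action on a generator $e(r)$ with $r=a/b$ in lowest terms is read off at the finite level $X_b$: under the description of Lemma~\ref{invtheta}, where $\theta_n(\zeta)=\zeta_n\zeta^{-1}$ corresponds on exponents to the reflection $k\mapsto 1-k$ on $\Z/b\Z$, the basis element $e(a/b)$ is sent to $e((1-a)/b)$, which is exactly the claimed formula. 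To extend this self-map from the abelian part $\Q[\Q/\Z]$ to the full crossed product $\Q[\Q/\Z]\rtimes\N$, I would declare $\theta(\mu_n)=\mu_n$ and verify that this is consistent with the standard Bost--Connes relations expressing $\mu_n^*e(r)\mu_n$ and $\mu_n e(r)\mu_n^*$ in terms of the semigroup endomorphisms. This consistency is precisely the intertwining $\sigma_m\circ\theta_{nm}=\theta_n\circ\sigma_m$ established in Lemma~\ref{invtheta}, which says that $\theta$ commutes with the endomorphisms that implement the $\N$-action.

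For the second statement, I would recall that $\mathbf{mGT}_n\subset S_n$ is generated by $(\Z/n\Z)^*$ and $\theta_n$, so it suffices to treat these two families of generators and to check that they assemble along the projective system into an action of $\mathbf{mGT}=\varprojlim_n\mathbf{mGT}_n$. The subgroups $(\Z/n\Z)^*$ give the usual $\hat\Z^*$-symmetry of the Bost--Connes system, acting on $e(k/n)$ by the unit multiplication $e(k/n)\mapsto e(gk/n)$ and fixing the isometries $\mu_n$; the involutions $\theta_n$ act by the self-map of the first part. Compatibility with the time evolution is then immediate: every generator of $\mathbf{mGT}$ acts trivially on the isometries $\mu_n$ and maps the abelian subalgebra $C^*(\Q/\Z)$ to itself, while $\sigma_t$ does the opposite, scaling $\sigma_t(\mu_n)=n^{\mathrm{i}t}\mu_n$ and fixing $C^*(\Q/\Z)$ pointwise; hence $g\circ\sigma_t=\sigma_t\circ g$ for every $g\in\mathbf{mGT}$. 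Preservation of the arithmetic subalgebra follows because each generator sends a basis element $e(r)$ to another basis element $e(r')$ with $r'\in\Q/\Z$ (not merely to a $\C$-linear combination) and fixes the $\mu_n$, so it maps $\Q[\Q/\Z]\rtimes\N$ into itself.

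The main obstacle I anticipate is conceptual rather than computational, and concerns the precise sense in which $\theta$ ``acts'' on the algebra. The genuine pullback of functions under the geometric involution multiplies $e(k/n)$ by the root of unity $\zeta_n^{k}$, which is not rational, so $\theta$ is best understood as the $\Q$-linear self-map permuting the canonical basis $\{e(r)\}$ by the reflection on indices; unlike the unit-multiplication action of $(\Z/n\Z)^*$, this reflection does not respect the group law of $\Z/n\Z$, so $\theta$ is a linear self-map but not an algebra homomorphism, which is precisely why the statement only claims a ``self-map''. Consequently the careful point in the argument is to verify that extending by $\theta(\mu_n)=\mu_n$ remains compatible with the crossed-product relations even though $\theta$ fails to be multiplicative on the base, and that the non-automorphism generators $\theta_n$ nonetheless combine with $(\Z/n\Z)^*$ consistently across the tower, so that all of $\mathbf{mGT}$ acts as claimed; both verifications reduce to the commutation identity of Lemma~\ref{invtheta}.
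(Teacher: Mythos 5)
Your proposal follows essentially the same route as the paper's proof: the action on $e(r)$ is read off from Lemma~\ref{invtheta}, extended by the identity on the isometries $\mu_n$, commutation with $\sigma_t$ follows because $\theta$ fixes the $\mu_n$ while $\sigma_t$ fixes the abelian part, and ${\bf mGT}$ is handled by writing its elements as words in $\hat\Z^*$ and $\theta$. Your closing observation --- that $k\mapsto 1-k$ is only affine, so $\theta$ is a basis-permuting $\Q$-linear self-map rather than an algebra homomorphism, which is why the statement says ``self-map'' --- is correct and makes explicit a point the paper's proof leaves implicit.
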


\begin{proof} The first statement follows directly from Lemma~\ref{invtheta}, which determines
the action of $\theta$ on the generators $e(r)$ of the abelian subalgebra $\Q[\Q/\Z]$.
The action on the arithmetic Bost--Connes algebra $\Q[\Q/\Z]\rtimes \N$ is then
determined by letting $\theta$ act as the identity on the remaining generators $\mu_n$.

This action extends to an action on the Bost--Connes $C^*$-algebra $C^*(\Q/\Z)\rtimes \N$,
which preserves the arithmetic subalgebra. This action
is compatible with the time evolution since the action is trivial on the
generators $\mu_n$ so that $\theta\circ \sigma_t = \sigma_t \circ \theta$.

The group ${\bf mGT}$ is generated by elements of $\hat\Z^*$ and
the Ihara involution $\theta$. In fact, we can write arbitrary elements in ${\bf mGT}$
as sequences $\theta^{\epsilon_0} \gamma_1 \theta \gamma_2 \theta \cdots \theta \gamma_n \theta^{\epsilon_1}$
for $\epsilon_0,\epsilon_1\in \{0,1\}$ and with the $\gamma_k\in \hat\Z^*$.
The group $\hat\Z^*$ acts by automorphisms of the Bost--Connes quantum statistical
mechanical system with $\gamma \circ \sigma_t = \sigma_t\circ \gamma$, preserving
the abelian subalgebra. Since the action of~$\theta$ also has these properties, we
obtain an action of ${\bf mGT}$ as symmetries of the Bost--Connes quantum statistical mechanical system.
\end{proof}

To be more precise, the effect of enlarging the group of symmetries from $\hat\Z^*$ to ${\bf mGT}$
implies that we can consider a larger family of covariant representations of
the Bost--Connes system on the same Hilbert space $\ell^2(\N)$, parameterized
by elements of ${\bf mGT}$, by setting, for $\alpha\in {\bf mGT}$,
\begin{gather*} \pi_\alpha(e(r)) \epsilon_n = \alpha(\zeta_r)^n \epsilon_n, \end{gather*}
where $\zeta_r$ is the image of $r\in \Q/\Z$ under a fixed embedding of
the abstract roots of unity~$\Q/\Z$ in~$\C^*$. The isometries $\mu_n$ from \cite{Mar} act in the
usual way, $\mu_m\epsilon_n=\epsilon_{nm}$. This change does not affect the
generating Hamiltonian of the time evolution, nor the corresponding partition
function given by the Riemann zeta function. The low temperature Gibbs
states are still given by polylogarithm functions evaluated at roots of unity,
normalized by the Riemann zeta function. However, when evaluating
zero temperature KMS states on elements of the arithmetic subalgebra,
we now have an action of the larger group~${\bf mGT}$ on the values
in~$\Q^{ab}$. This action by a non-abelian group no longer has a Galois
interpretation as an action on roots of unity, hence it does not directly give
us a way to extend the Bost--Connes system to non-abelian Galois theory.

In the remaining part of this section, we will show that there are other possible
ways of incorporating the Drinfeld--Ihara involution in a construction generalizing the
Bost--Connes algebra that lead to more interesting changes to the
structure of the resulting quantum statistical mechanical system.

\subsection{Crossed product algebras and field extensions}\label{crossFQsec}

The variant of the Bost--Connes quantum statistical mechanical system
constructed in~\cite{Mar} was aimed at merging two different aspects of $\F_1$-geometry: the
relation described in~\cite{CCM2} of the integral Bost--Connes algebra to the
extensions~$\F_{1^m}$ of~\cite{KapSmi}, and the analytic functions over $\F_1$
constructed in~\cite{Man}. The modified Bost--Connes algebra considered in~\cite{Mar} involves an action of endomorphisms on the Habiro ring of~\cite{Hab}.

We consider here a simpler variant of the algebra of \cite{Mar}, which will enable us
 to present naturally the transition from abelian to non-abelian Galois groups
in a form that recovers the embedding of the absolute Galois group in the
Grothendieck--Teichm\"uller group as described by Ihara in \cite{Ihara}.

Let $\cF_\Q=\Q[t^r; r\in \Q^*_+]$ be the polynomial algebra in rational powers of a variable $t$.
For $s\in \Q_+^*$ let $\sigma_s$ denote the action $\sigma_s(f)(t) =f(t^s)$ for $f\in \cF_\Q$.
Thus, we can form the group crossed product algebra $\cF_\Q \rtimes \Q^*_+$.

The subsemigroup $\N$ of the group $\Q^*_+$ induces an action by endomorphisms on
the subalgebra $\cP_\Q=\Q[t]$ by $\sigma_n(f)(t)=f(t^n)$. Note that, unlike
the morphisms $\sigma_n$ of the Bost--Connes system, acting by endomorphisms of
$\Q[\Q/\Z]$, which are surjective but not injective, in this case the morphisms $\sigma_n$
acting on $\cP_\Q$ are injective.

Consider, as in the case of the original
Bost--Connes algebra, generators $\mu_n$ and $\mu_n^*$, for $n\in \N$, satisfying
the relations $\mu_n\mu_m=\mu_{nm}$, $\mu_n^*\mu_m^*=\mu^*_{nm}$, $\mu_n^*\mu_n=1$,
and in the case where $(n,m)=1$ also $\mu_n \mu_m^*=\mu_m^*\mu_n$.
Consider the algebra generated by $\cP_\Q$ and the $\mu_n$, $\mu_n^*$ with the relations
$\mu_n^* f = \sigma_n(f) \mu_n^*$ and $f \mu_n =\mu_n \sigma_n(f)$. The elements
$\pi_n=\mu_n\mu_n^*$ satisfy $\pi_n^2=\pi_n=\pi_n^*$ and $\pi_n f =f \pi_n$ for all $f\in \cP_\Q$
and all $n\in\N$.

Unlike the Bost--Connes case, the $\pi_n$ are not elements of the algebra $\cP_\Q$
on which the semigroup is acting.

Denote by $\rho_n$ the endomorphisms $\rho_n(f) :=\mu_n f \mu_n^*$.
They satisfy the relations $\sigma_n( \rho_n(f))=f$ and
$\rho_n( \sigma_n(f))= \pi_n f$. The map that sends $\mu_n f \mu_n^*$ to $f(t^{1/n})$ identifies
the direct limit of the injective maps $\sigma_n\colon \cP_\Q \to \cP_\Q$ with $\cF_\Q$ on which
the action of the $\sigma_n$ becomes an action by automorphisms. This leads to the
crossed product algebra $\cF_\Q \rtimes \Q^*_+$.

Furthermore, in \cite{Mar} there was considered the
inverse limit $\hat\cF_\Q=\varprojlim_N \cF_\Q/\cJ_N$ with respect to the ideals $\cJ_N$
generated by $(t^r)_N=(1-t^r)\cdots \big(1-t^{rN}\big)$ for $r\in \Q^*_+$, which is modelled on the Habiro
ring construction of~\cite{Hab}. Here instead we replace $\cF_\Q$ and $\cP_\Q$ with the $\Q$-algebras
$\cA_{\bar\Q}=\bar\Q\{ \{ t \}\}$, given by the algebraically closed
field of formal Puiseux series $\bar\Q\{ \{ t \}\}=\cup_{N\geq 1} \bar\Q\big(\big( t^{1/N}\big)\big)$, and
the field of rational functions $\cB_\Q=\bar\Q(t)$, respectively.

\subsection{The Drinfeld--Ihara hidden symmetry involution}

The involution $t\mapsto 1-t$ maps isomorphically $\bar\Q\{ \{ t \}\} \to \bar\Q\{ \{ 1-t \}\}$,
and as in \cite{Ihara} we denote by~$M$ and~$M'$, the maximal Galois extensions of
$\bar\Q(t)$ unramified outside of~$\{ 0,1,\infty\}$ inside $\bar\Q\{ \{ t \}\}$ and in $\bar\Q\{ \{ 1-t \}\}$, respectively, with the involution mapping $M \to M'$. The absolute Galois group $G_\Q={\rm Gal}(\bar\Q/\Q)$ acts on $\bar\Q\{ \{ t \}\}$ and $\bar\Q\{ \{ 1-t \}\}$ by acting on the Puiseux coefficients and induces actions on $M$ and $M'$. Moreover, as discussed in~\cite{Ihara}, the Galois group ${\rm Gal}\big(M/\bar\Q(t)\big)$ can be identified with the profinite fundamental group $\hat F_2=\hat \pi_1\big(\P^1\smallsetminus \{0,1,\infty\}, (0,1)\big)$,
where $F_2= \pi_1\big(\P^1\smallsetminus \{0,1,\infty\}, (0,1)\big)$ is the free group on two generators.

As observed in~\cite{Ihara}, the maximal abelian subextension of $\bar\Q(t)$ in $M$ is generated by
all the elements $t^{1/N}$ and $(1-t)^{1/N}$. Thus, we can view a slightly modified version of the
construction of the crossed product algebra $\cF_\Q \rtimes \Q^*_+$ mentioned above as a
construction of this maximal abelian extension, just like the original Bost--Connes system can be
regarded as a noncommutative geometry construction of the maximal abelian extension of $\Q$.

\subsection{Semigroup action and maximal abelian extension}

More precisely, we need to modify the above crossed product algebra construction to account
for the missing involution $t\leftrightarrow 1-t$. This can be done by replacing the polynomial algebra
$\cP_\Q=\Q[t]$ with $\cB_\Q=\bar\Q(t)$ and, in addition to the endomorphisms $\sigma_n(f)(t)=f(t^n)$
considering an extra generator $\tau(f)(t)=f(1-t)$.

\begin{defn}\label{Ssemigr}
Denote by $\cS$ the non-abelian semigroup given by the free product $\cS=\N \star \Z/2\Z$. Its elements
can be written in the form \begin{gather*}\mu_s=F^{\epsilon_0} \mu_{n_1} F \mu_{n_2} F \cdots
F \mu_{n_k} F^{\epsilon_1} , \end{gather*} with $s=s(\epsilon_0,\epsilon_1,\underline{n})$ for
$\epsilon_0,\epsilon_1\in \{ 0,1 \}$ and $\underline{n}=(n_1,\ldots, n_k)$.

Denote by $\cS_0$ and $\cS_1$ the two abelian sub-semigroups given, respectively, by $\cS_0=\N$
and $\cS_1=F \N F=\{ \tilde\mu_n:=F \mu_n F\,|\, n\in \N \}$.
\end{defn}

The following statement is obtained by directly adapting the original argument of \cite{Mar} as
discussed in the subsection \ref{crossFQsec} above.

\begin{lem}\label{S01cross} The sub-semigroups $\cS_0$ and $\cS_1$ define the action of endomorphisms $\sigma_n$ and $\tilde\sigma_n$ upon $\cB_\Q=\bar\Q(t)$, with partial inverses $\rho_n$ and $\tilde\rho_n$ respectively.

The direct limits with respect to
injective endomorphisms $\sigma_n$ and $\tilde\sigma_n$ can be identified, respectively, with the crossed
product algebras $\cB_{\Q,0}\rtimes \Q^*_+$ and $\cB_{\Q,1}\rtimes \Q^*_+$, where
$\cB_{\Q,0}=\bar\Q(t^r ; r\in \Q^*_+)$
and $\cB_{\Q,1}=\bar\Q((1-t)^r ; r\in \Q^*_+)$.
\end{lem}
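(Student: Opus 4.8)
The plan is to treat the two abelian sub-semigroups $\cS_0=\N$ and $\cS_1=F\N F$ symmetrically, exploiting the fact that conjugation by the involution $F$ (acting as $\tau(f)(t)=f(1-t)$) is an involutive automorphism of $\cB_\Q=\bar\Q(t)$ that intertwines the two families of endomorphisms. Concretely $\tilde\sigma_n=\tau\circ\sigma_n\circ\tau$, so every assertion about $\cS_1$ is obtained from the corresponding assertion about $\cS_0$ by applying $\tau$. This reduces the whole lemma to the $\cS_0$ case, which in turn is the argument of subsection~\ref{crossFQsec} (following~\cite{Mar}) with the polynomial algebra $\cP_\Q=\Q[t]$ replaced by the field $\cB_\Q=\bar\Q(t)$.

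First I would verify that $\sigma_n(f)(t)=f(t^n)$ defines an injective $\bar\Q$-algebra endomorphism of $\bar\Q(t)$: it is the substitution homomorphism sending $t\mapsto t^n$, with image the subfield $\bar\Q(t^n)$, and it is injective because $\bar\Q(t)$ is a field, so any nonzero algebra map is injective. Since $\tau$ is an involutive automorphism, $\tilde\sigma_n=\tau\sigma_n\tau$ is likewise an injective endomorphism, substituting $(1-t)\mapsto(1-t)^n$. The relations $\sigma_n\sigma_m=\sigma_{nm}$ and their tilded analogues are immediate from composition of substitutions, so $\cS_0$ and $\cS_1$ genuinely act on $\cB_\Q$. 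Introducing the generators $\mu_n,\mu_n^*$ with $\mu_n^*\mu_n=1$ and the relations $\mu_n^* f=\sigma_n(f)\mu_n^*$, $f\mu_n=\mu_n\sigma_n(f)$ exactly as in~\ref{crossFQsec}, the partial inverses are $\rho_n(f)=\mu_n f\mu_n^*$ and $\tilde\rho_n(f)=\tilde\mu_n f\tilde\mu_n^*$. The identities $\sigma_n\circ\rho_n=\id$ and $\rho_n\circ\sigma_n(f)=\pi_n f$, with $\pi_n=\mu_n\mu_n^*$, follow formally from $\mu_n^*\mu_n=1$, and the tilded versions follow by $\tau$-conjugation.

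Next I would identify the direct limits. Ordering $\N$ by divisibility, form the direct system whose object at level $n$ is a copy of $\bar\Q(t)$, with transition map $\sigma_{n/m}$ for $m\mid n$. The assignment $\mu_n f\mu_n^*\mapsto f\big(t^{1/n}\big)$ is compatible with the transitions: an element at level $n$ read as $f(t^{1/n})$ is carried by $\sigma_m$ to $f(t^m)$ at level $nm$, which reads as $f\big((t^{1/(nm)})^m\big)=f(t^{1/n})$. Hence it gives a well-defined isomorphism of the limit onto $\cB_{\Q,0}=\bar\Q(t^r; r\in\Q^*_+)$. On this limit each $\sigma_n$ acts as the bijection $t^r\mapsto t^{nr}$ of rational exponents, hence as an automorphism, so the $\N$-action extends to a $\Q^*_+$-action and the algebra generated by the limit together with the $\mu_n,\mu_n^*$ is the crossed product $\cB_{\Q,0}\rtimes\Q^*_+$. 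Applying $\tau$ verbatim, replacing $t^{1/n}$ by $(1-t)^{1/n}$ throughout, identifies the direct limit for $\cS_1$ with $\cB_{\Q,1}\rtimes\Q^*_+$.

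The main obstacle is essentially bookkeeping rather than a genuine difficulty: one must check that nothing in the $\cP_\Q$-argument of~\cite{Mar} used the polynomial (rather than field) structure, so that the passage to $\cB_\Q=\bar\Q(t)$ is harmless. The only point requiring real care is that the $\sigma_n$ remain injective and become surjective in the limit, so that they extend to automorphisms and the crossed product by the group $\Q^*_+$ is meaningful; this is exactly where the field structure of $\bar\Q(t)$ and the presence of all rational powers in $\cB_{\Q,0}$ and $\cB_{\Q,1}$ enter. Confirming that the $\tau$-conjugation carries $\cB_{\Q,0}$ and $\cB_{\Q,1}$ to one another correctly is the one compatibility I would write out in detail.
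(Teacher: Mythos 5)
Your proposal is correct and follows essentially the same route as the paper: the paper's proof likewise reduces everything to conjugation by $F$ (i.e., $\tilde\sigma_n=\tau\circ\sigma_n\circ\tau$, giving $\tilde\sigma_n(f)(t)=f(1-(1-t)^n)$ and $\tilde\rho_n(f)(t)=f(1-(1-t)^{1/n})$), deferring the $\cS_0$ case and the direct-limit identification $\mu_n f\mu_n^*\mapsto f\big(t^{1/n}\big)$ to the earlier discussion of Section~\ref{crossFQsec}. You merely spell out in more detail the injectivity and limit arguments that the paper cites rather than repeats.
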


\begin{proof} The elements $\tilde\mu_n= F \mu_n F$, with $\tilde\mu_n^*=F \mu_n^* F$, satisfy
$\tilde\mu_n^* f \tilde\mu_n =\tilde\sigma_n(f)$. Here $\tilde\sigma_n=\tau \circ \sigma_n \circ \tau$
is the semigroup action $\tilde\sigma_{nm}=\tilde\sigma_n \circ \tilde\sigma_m$ given by
$\tilde\sigma_n(f)(t) = f\big(1-(1-t)^n\big)$. Correspondingly we have $\tilde\mu_n f \tilde\mu_n^* =\tilde\rho_n(f)$ with $\tilde\rho_n(f)(t)=f\big(1-(1-t)^{1/n}\big)$.
\end{proof}

Together with the description of~\cite{Ihara} of the maximal abelian subextension of $\bar\Q(t)$ in $M$
as generated by all the elements $t^{1/N}$ and $(1-t)^{1/N}$ (see also~\cite{MalMat}), we then obtain
this maximal abelian subextension in the following way.

\begin{prop}Consider the algebras $\cB_{\Q,0}$ and $\cB_{\Q,1}$ of Lemma~{\rm \ref{S01cross}} as
subalgebras of $M$. They together generate the maximal abelian subextension of $M$.
\end{prop}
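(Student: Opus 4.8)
The plan is to reduce the statement to the observation of Ihara recalled just before it, by identifying the subalgebra generated by $\cB_{\Q,0}$ and $\cB_{\Q,1}$ inside $M$ with the field generated over $\bar\Q(t)$ by all the Kummer radicals $t^{1/N}$ and $(1-t)^{1/N}$, and then verifying that this field is genuinely the \emph{maximal} abelian subextension rather than a proper one.

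First I would unwind the definitions. Since every rational power $t^{p/q}$ with $p/q\in\Q^*_+$ and $(p,q)=1$ equals $(t^{1/q})^p$, the field $\cB_{\Q,0}=\bar\Q(t^r;\, r\in\Q^*_+)$ coincides with $\bigcup_{N\geq 1}\bar\Q(t^{1/N})$, and likewise $\cB_{\Q,1}=\bigcup_{N\geq 1}\bar\Q((1-t)^{1/N})$, where $(1-t)^{1/N}$ is interpreted via its binomial expansion as an element of $\bar\Q[[t]]\subset\bar\Q\{\{t\}\}$. Each $\bar\Q(t^{1/N})/\bar\Q(t)$ is a cyclic Kummer extension ramified only over $\{0,\infty\}$, and each $\bar\Q((1-t)^{1/N})/\bar\Q(t)$ is cyclic, ramified only over $\{1,\infty\}$; hence both towers lie inside $M$, their Galois groups over $\bar\Q(t)$ are $\varprojlim_N \Z/N\Z\cong\hat\Z$, and both are abelian. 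Because each tower is a directed union of finite field extensions and $M$ is a field, the $\Q$-subalgebra of $M$ generated by $\cB_{\Q,0}$ and $\cB_{\Q,1}$ is automatically a field, namely their compositum $\cB_{\Q,0}\cdot\cB_{\Q,1}=\bar\Q(t^{1/N},(1-t)^{1/M};\, N,M\geq 1)$.

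This compositum of two abelian extensions is abelian over $\bar\Q(t)$, and it is exactly the field occurring in Ihara's observation, so it is contained in the maximal abelian subextension $M^{ab}$. For the reverse inclusion I would argue on the Galois side: under the identification ${\rm Gal}(M/\bar\Q(t))\cong\hat F_2$ of \cite{Ihara} one has $\hat F_2^{ab}\cong\hat\Z\times\hat\Z$, the two factors being the monodromies around $0$ and around $1$. A short monodromy computation shows that $\cB_{\Q,0}$ realizes the first coordinate projection $\hat F_2^{ab}\to\hat\Z$ (the loop around $0$ multiplies $t^{1/N}$ by a primitive $N$-th root of unity while fixing $(1-t)^{1/N}$, since that loop does not wind around $1$), and $\cB_{\Q,1}$ realizes the second; jointly they induce the \emph{identity} isomorphism $\hat F_2^{ab}\cong\hat\Z\times\hat\Z$. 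Thus the compositum corresponds to the closed commutator subgroup of $\hat F_2$, and its fixed field is precisely $M^{ab}$.

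The main obstacle is exactly this maximality step: one must exclude the possibility that the two Kummer towers only realize a proper closed subgroup of $\hat\Z\times\hat\Z$. The cleanest way to rule this out is the monodromy computation just sketched, or equivalently Kummer theory over $\bar\Q(t)$ applied to radicals of the two divisors $0-\infty$ and $1-\infty$, which are $\Z$-independent in the relevant character group; their joint image therefore spans all of $\hat F_2^{ab}$. Once this independence is established, the remaining content is the routine identification of $\cB_{\Q,0}$ and $\cB_{\Q,1}$ with the Kummer towers together with the quoted observation of Ihara, and the proof concludes.
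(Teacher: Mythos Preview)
Your proposal is correct and follows essentially the same route as the paper, which simply invokes Ihara's description of the maximal abelian subextension of $\bar\Q(t)$ in $M$ as generated by all $t^{1/N}$ and $(1-t)^{1/N}$ and then identifies this with the compositum $\cB_{\Q,0}\cdot\cB_{\Q,1}$. You go further than the paper by also sketching the Galois-theoretic verification of the maximality step via $\hat F_2^{ab}\cong\hat\Z\times\hat\Z$ and the monodromy action; the paper treats this as a black-box citation to \cite{Ihara} and \cite{MalMat}, whereas you unpack it.
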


\subsection{Semigroup action and non-abelian extensions}

We now consider the semigroup action of the full $\cS=\N \star \Z/2\Z$ and the endomorphisms $\sigma_s$ of $\cB_\Q=\bar\Q(t)$, given by $\sigma_s(f)=\mu_s^* f \mu_s$ with partial inverses $\rho_s(f) =\mu_s f \mu_s^*$.

\begin{lem}\label{Fcross} Consider the $\bar\Q$-algebra generated by the field of rational functions $\cB_\Q=\bar\Q(t)$
and an additional generator $F$ satisfying relations $F=F^*$, $F^2={\rm Id}$, and
$F f F=\tau(f)$, for all $f\in \bar\Q(t)$.

The resulting algebra is a group crossed product $\tilde\cB_\Q =\cB_\Q \rtimes \Z/2\Z$,
where the $\Z/2\Z$ action is the Drinfeld--Ihara involution $t\mapsto 1-t$.
\end{lem}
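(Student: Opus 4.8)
The plan is to exhibit an explicit isomorphism between the abstractly presented algebra $\tilde\cB_\Q$ and the crossed product $\cB_\Q\rtimes\Z/2\Z$, with the latter constructed concretely as the free left $\cB_\Q$-module on a basis $\{1,U\}$, equipped with the multiplication determined by $U^2=1$ and $Uf=\tau(f)U$ for $f\in\cB_\Q$, together with the $*$-structure $U^*=U$. First I would check that this crossed product is a well-defined $*$-algebra: associativity follows from $\tau$ being an automorphism with $\tau^2=\mathrm{id}$, and compatibility of $U^*=U$ with the commutation relation requires $\tau$ to be a $*$-automorphism with $\tau^2=\mathrm{id}$, both of which hold for the substitution $t\mapsto 1-t$ defined over $\Q$.

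Next I would record that $U$ satisfies precisely the three defining relations of the presentation: $U^*=U$ and $U^2=1$ hold by construction, while $UfU=\tau(f)U\cdot U=\tau(f)U^2=\tau(f)$. By the universal property of the $*$-algebra presented by these generators and relations, this yields a unique $*$-homomorphism $\Phi\colon\tilde\cB_\Q\to\cB_\Q\rtimes\Z/2\Z$ with $\Phi|_{\cB_\Q}=\mathrm{id}$ and $\Phi(F)=U$. Since $1$ and $U$ generate the crossed product as a left $\cB_\Q$-module, $\Phi$ is surjective; moreover its restriction to the copy of $\cB_\Q$ inside $\tilde\cB_\Q$ is the canonical embedding $\cB_\Q\hookrightarrow\cB_\Q\rtimes\Z/2\Z$, so $\cB_\Q$ sits faithfully inside $\tilde\cB_\Q$.

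For injectivity I would first establish a normal form in $\tilde\cB_\Q$. Multiplying $FfF=\tau(f)$ on the right by $F$ and using $F^2=\mathrm{Id}$ gives the commutation relation $Ff=\tau(f)F$; consequently any product of elements of $\cB_\Q$ with powers of $F$ can be rewritten, by moving each occurrence of $F$ to the right and collapsing $F^2$ to $1$, in the form $g_0+g_1F$ with $g_0,g_1\in\cB_\Q$. Hence $\tilde\cB_\Q=\cB_\Q+\cB_\Q F$ is spanned as a left $\cB_\Q$-module by $\{1,F\}$. If now $g_0+g_1F\in\ker\Phi$, then $g_0+g_1U=0$ in the crossed product, and since $\{1,U\}$ is a free $\cB_\Q$-basis there we conclude $g_0=g_1=0$. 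Thus $\Phi$ is injective, hence an isomorphism of $*$-algebras identifying $\tilde\cB_\Q$ with $\cB_\Q\rtimes\Z/2\Z$, with $F$ the unitary implementing the Drinfeld--Ihara involution $t\mapsto 1-t$.

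The argument is essentially formal, so there is no deep obstacle; the only point requiring care is the normal-form reduction, since one must guarantee that no hidden consequence of the relations collapses the two-dimensionality over $\cB_\Q$. This is exactly what freeness of the target crossed product rules out, which is why the efficient route is to build $\Phi$ into an explicitly free rank-two module \emph{before} arguing injectivity, rather than attempting to analyze the presented algebra in isolation.
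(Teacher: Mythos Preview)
Your proof is correct and follows essentially the same idea as the paper's own argument, namely that $F$ is a self-adjoint square root of the identity implementing the involution $\tau$, whence the algebra is the crossed product by $\Z/2\Z$. The paper's proof is a one-line observation to this effect, whereas you carry out the standard verification in full: building the crossed product explicitly as a free rank-two $\cB_\Q$-module, invoking the universal property of the presentation to obtain the homomorphism, and using freeness of the target basis $\{1,U\}$ to deduce injectivity from the normal form $g_0+g_1F$. This extra care is appropriate and your remark at the end---that one should map into an explicitly free object rather than analyze the presented algebra internally---is exactly the right methodological point.
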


\begin{proof} The generator $F$ is a sign operator since it satisfies $F=F^*$, $F^2={\rm Id}$,
and it implements the Drinfeld--Ihara involution by the relation $F f F=\tau(f)$, hence we obtain
the action of $\cB_\Q \rtimes \Z/2\Z$ as superalgebra.
\end{proof}

Consider now the $\bar{\Q}$-algebra generated by the field of rational functions $\cB_\Q=\bar\Q(t)$
and a set of generators $\mu_n$, $\mu_n^*$ with relations
\begin{gather*}
\mu_n\mu_m=\mu_{nm},\qquad \mu_n^*\mu_m^*=\mu^*_{nm}, \qquad \mu_n^*\mu_n=1
\end{gather*}
 for all $n\in \N$, as well as
\begin{gather*}
\mu_n \mu_m^*=\mu_m^*\mu_n
\end{gather*}
for $(n,m)=1$, and
\begin{gather*}
\mu_n^* f = \sigma_n(f) \mu_n^*,\qquad f \mu_n =\mu_n \sigma_n(f)
\end{gather*}
for all $f\in \bar\Q(t)$ and all $n\in \N$. Moreover, include an additional generator $F$ satisfying
\begin{gather*}
F=F^*,\qquad F^2={\rm Id},\qquad F f F=\tau(f)
\end{gather*}
for all $f\in \bar\Q(t)$.

This $\bar{\Q}$-algebra contains the group crossed product $\cB_\Q \rtimes \Z/2\Z$ of Lemma~\ref{Fcross} and the full semigroup $\cS=\N \star \Z/2\Z$. In fact the same argument as in Section~\ref{crossFQsec} shows that we can describe this algebra in terms of a direct limit over the endomorphisms $\sigma_s$ acting on $\cB_\Q$, where the action in the limit becomes implemented by automorphisms, so that the resulting algebra is a group crossed product by $\Upsilon=\Q^*_+\star \Z/2\Z$.

\begin{lem}\label{barQSigmas} For an element $s\in \cS$ and for $f(t)=t$ consider the function $\rho_s(t)$. This determines a curve $\Sigma_s$ which is a branched covering of $\P^1$ branched at
$\{0,1,\infty\}$, with field of functions~$\bar\Q(\Sigma_s)$ given by the smallest finite
Galois extension of $\bar\Q(t))$ unramified outside of $\{0,1,\infty\}$ that contains~$\rho_s(t)$, with Galois group $G_s:={\rm Gal}\big(\bar\Q(\Sigma_s)/\bar\Q(t)\big)$.
\end{lem}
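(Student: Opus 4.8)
The plan is to identify $\sigma_s$ with precomposition by an explicit Belyi map and $\rho_s(t)$ with a branch of its inverse, so that $\Sigma_s$ becomes the Galois closure of the associated covering. First I would record that each generator of $\cS$ acts on $\bar\Q(t)$ by precomposition with a genus zero polynomial map fixing $\{0,1,\infty\}$ setwise and ramified only over these points: the generator $\mu_n$ corresponds to $g_n(t)=t^n$ (ramified over $0,\infty$), the involution $F$ to $r(t)=1-t$ (an isomorphism exchanging $0,1$), and $\tilde\mu_n=F\mu_n F$ to $1-(1-t)^n$ (ramified over $1,\infty$). For a word $s=s(\epsilon_0,\epsilon_1,\underline n)$ the identity $\sigma_{ab}=\sigma_b\circ\sigma_a$, immediate from $\sigma_s(f)=\mu_s^* f\mu_s$, shows that $\sigma_s(f)=f\circ g_s$, where $g_s$ is the composite polynomial obtained by composing the generator maps in the order dictated by the word. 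Since $\rho_s(f)=\mu_s f\mu_s^*$ is the partial inverse of $\sigma_s$ (by the relations $\sigma_n\circ\rho_n=\mathrm{id}$, $\tilde\sigma_n\circ\tilde\rho_n=\mathrm{id}$ of Lemma~\ref{S01cross}), evaluating at $f(t)=t$ gives $\rho_s(t)=g_s^{-1}(t)$, a branch of the inverse of $g_s$ satisfying the algebraic relation $g_s(\rho_s(t))=t$.

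The main step is then to prove by induction on the length of $s$ that $g_s$ is a Belyi-extending map, that is, $g_s(\{0,1,\infty\})\subseteq\{0,1,\infty\}$ and $g_s$ is unramified outside $\{0,1,\infty\}$. The base cases are the three generator maps above. For the inductive step I would use that the critical values of a composite satisfy $\mathrm{Branch}(g_1\circ g_2)\subseteq \mathrm{Branch}(g_1)\cup g_1(\mathrm{Branch}(g_2))$: if $\mathrm{Branch}(g_i)\subseteq\{0,1,\infty\}$ and each $g_i$ maps $\{0,1,\infty\}$ into itself, then $g_1(\mathrm{Branch}(g_2))\subseteq\{0,1,\infty\}$, so $\mathrm{Branch}(g_1\circ g_2)\subseteq\{0,1,\infty\}$ and $g_1\circ g_2$ still preserves $\{0,1,\infty\}$; this is exactly the closure property of the Belyi-extending semigroup $\cE$ used in Section~\ref{BelyiExtSec}. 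Consequently the covering $g_s\colon\P^1\to\P^1$ is branched only over $\{0,1,\infty\}$, and the field $\bar\Q(t)(\rho_s(t))=\bar\Q(t)[w]/(g_s(w)-t)$ is precisely its field of functions, hence a finite extension of $\bar\Q(t)$ unramified outside $\{0,1,\infty\}$.

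Finally I would pass to the Galois closure. The smallest finite Galois extension of $\bar\Q(t)$ containing $\rho_s(t)$ is the Galois closure of $\bar\Q(t)(\rho_s(t))$, and the branch locus of a Galois closure coincides with that of the original extension; hence it too is unramified outside $\{0,1,\infty\}$, and it is the function field $\bar\Q(\Sigma_s)$ of a smooth projective curve $\Sigma_s$ carrying a branched covering of $\P^1$ branched only over $\{0,1,\infty\}$, with $G_s=\Gal(\bar\Q(\Sigma_s)/\bar\Q(t))$ its deck group; in particular $\bar\Q(\Sigma_s)\subseteq M$. The point requiring the most care is the ramification bookkeeping: verifying the branch-locus containment for composites and, crucially, that forming the Galois closure does not enlarge the branch locus beyond $\{0,1,\infty\}$. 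The algebraic description $g_s(w)=t$ is what lets me sidestep the ambiguity in the choice of branch of the multivalued $\rho_s$, since it encodes the covering intrinsically rather than through a single-valued local inverse.
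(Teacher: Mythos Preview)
Your argument is correct and follows essentially the same strategy as the paper: write down the polynomial relation $g_s(z)=t$ satisfied by $z=\rho_s(t)$, observe that the resulting covering of $\P^1$ is branched only over $\{0,1,\infty\}$, and take the Galois closure. The paper does this by unwinding $\rho_s$ as an iterated composite and giving illustrative examples (e.g., $(1-(1-z)^m)^n=t$), then simply asserting the branch locus is contained in $\{0,1,\infty\}$; you supply the missing verification via the inductive branch-locus formula $\mathrm{Branch}(g_1\circ g_2)\subseteq \mathrm{Branch}(g_1)\cup g_1(\mathrm{Branch}(g_2))$, which is exactly the closure property of the semigroup $\cE$ from Section~\ref{BelyiExtSec}. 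This is a genuine improvement in rigor, and your explicit remark that passing to the Galois closure does not enlarge the branch locus is likewise a point the paper leaves implicit.

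One small difference worth noting: the paper handles the field of definition by first working over $\C$ (any finite extension of $\C(t)$ unramified outside three points gives a compact Riemann surface) and then invoking a descent argument from \cite{MalMat} to get back to $\bar\Q$. Your route is more economical: since each generator map $g_n$, $r$, and hence the composite $g_s$, is already a polynomial with rational coefficients, the extension $\bar\Q(t)[w]/(g_s(w)-t)$ and its Galois closure are defined over $\bar\Q$ from the start, so no descent is needed. Both are valid; yours is cleaner for this particular lemma.
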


\begin{proof}Using notation of the Definition~\ref{Ssemigr}, we can write $s$ in the form
$s=s(\epsilon_0,\epsilon_1,\underline{n})$ for some $\epsilon_0,\epsilon_1\in \{ 0,1 \}$ and some $\underline{n}=(n_1,\ldots, n_k)$. Then the element $z=\rho_s(t)$ is given by $\rho_s(t)=\tau^{\epsilon_0}\rho_{n_1} \tau\rho_{n_2}\cdots \tau\rho_{n_k}\tau^{\epsilon_1}$. It satisfies a polynomial equation of the form $\tau^{\epsilon_0}\sigma_{n_1}\tau\cdots\tau \sigma_{n_k}\tau^{\epsilon_1}(z)=t$. For example, for $s=\mu_n F \mu_m F$ we have $z=\rho_s(t)= 1-\big(1-t^{1/n}\big)^{1/m}$ satisfying $\big(1-(1-z)^m\big)^n=t$, while for $s=\mu_n F \mu_m$ we have $z=\rho_s(t)=\big(1-t^{1/n}\big)^{1/m}$ with $\big(z^m-1\big)^n=t$. Thus, the element~$\rho_s(t)$ detemines a branched covering of $\P^1$ with branch locus in the set $\{0,1,\infty\}$.

Any finite field extension of $\C(t)$ unramified outside of $\{ 0,1,\infty \}$ determines a compact complex Riemann surface $\Sigma_s$ with a branched covering of $\P^1$ unramified outside of these three points. Such $\Sigma_s$ is defined over a number field and descends to an algebraically closed fields
(see the descent technique described in \cite[Section~2.1]{MalMat}).
This gives the required result for the field of functions $\bar\Q(\Sigma_s)$.
\end{proof}

 \begin{prop}\label{barQS} The fields $\bar\Q(\Sigma_s)$ form a direct system over the semigroup $\cS$
ordered by divisibility. The direct limit $\bar\Q(\Sigma_\cS):=\varinjlim_{s\in \cS} \bar\Q(\Sigma_s)$,
with symmetries $G_\cS=\varprojlim_{s\in\cS} G_s$, contains all functions of the form $(1-t^r)^{r'}$ for $r,r'\in \Q^*_+$. The resulting action of~$\cS$
on the direct limit~$\bar\Q(\Sigma_\cS)$ becomes invertible and extends to an action of the
enveloping group $\Upsilon=\Q^*_+\star \Z/2\Z$.
\end{prop}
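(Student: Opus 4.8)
The plan is to verify the three assertions of Proposition~\ref{barQS} in sequence, building directly on Lemma~\ref{barQSigmas}. First I would check that the fields $\bar\Q(\Sigma_s)$ genuinely form a direct system over $\cS$. For $s, s'\in \cS$ with $s$ dividing $s'$ (in the sense that $\rho_{s'}(t)$ factors through $\rho_s(t)$ via the composition structure encoded in Definition~\ref{Ssemigr}), the element $\rho_s(t)$ lies in $\bar\Q(\Sigma_{s'})$, so there is a natural inclusion $\bar\Q(\Sigma_s)\hookrightarrow \bar\Q(\Sigma_{s'})$. The ordering by divisibility is directed because, given $s_1, s_2$, their formal product (or least common multiple with respect to the free-product word structure) dominates both; this uses that each $\rho_s(t)$ is a concrete algebraic function of the form $\tau^{\epsilon_0}\rho_{n_1}\tau\rho_{n_2}\cdots\tau\rho_{n_k}\tau^{\epsilon_1}$ as computed in Lemma~\ref{barQSigmas}. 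The transition maps are compatible because composition of the underlying field automorphisms is associative, and the $G_s$ form an inverse system with the dual (restriction) maps $G_{s'}\to G_s$, yielding $G_\cS=\varprojlim_{s\in\cS} G_s$ acting on the direct limit $\bar\Q(\Sigma_\cS)$.

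Second, I would exhibit each function $(1-t^r)^{r'}$, for $r,r'\in\Q^*_+$, inside the direct limit. Writing $r=p/q$ and $r'=p'/q'$, the idea is to produce a word $s\in\cS$ whose associated $\rho_s(t)$ generates a field extension containing $(1-t^r)^{r'}$. Concretely, $t^{1/q}$ is obtained from $\rho_q=\sigma_q^{-1}$ in $\cS_0=\N$, so $t^r=t^{p/q}$ lies in the direct limit over $\cS_0$; applying the involution generator $F$ (which implements $t\mapsto 1-t$) and the endomorphisms $\tilde\sigma_n$, $\tilde\rho_n$ of Lemma~\ref{S01cross} then produces $(1-u)^{1/q'}$ for any $u$ already in the limit, in particular for $u=t^r$. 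Iterating gives $(1-t^r)^{p'/q'}=(1-t^r)^{r'}$. This is essentially the computation already carried out for the sample words $s=\mu_n F\mu_m$ and $s=\mu_n F\mu_m F$ in the proof of Lemma~\ref{barQSigmas}, now run in the needed generality.

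Third, I would argue that the $\cS$-action becomes invertible in the limit and extends to $\Upsilon=\Q^*_+\star\Z/2\Z$. The endomorphisms $\sigma_n$, $\tilde\sigma_n$ are injective (as already noted, unlike in the original Bost--Connes case), and passing to the direct limit over the injective system inverts them: this is the same mechanism, invoked in Section~\ref{crossFQsec} and Lemma~\ref{S01cross}, by which $\sigma_n\colon\cP_\Q\to\cP_\Q$ becomes an automorphism of $\cF_\Q$. Each $\mu_n$ becomes invertible with inverse realized by $\rho_n$, so the free sub-semigroup $\cS_0=\N$ generates the group $\Q^*_+$, the generator $F$ is already its own inverse ($F^2=\mathrm{Id}$), and the free-product structure $\cS=\N\star\Z/2\Z$ passes to the enveloping group $\Upsilon=\Q^*_+\star\Z/2\Z$ acting by automorphisms of $\bar\Q(\Sigma_\cS)$.

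The main obstacle I anticipate is the second step: making precise the divisibility ordering on the free-product semigroup $\cS$ and showing it is genuinely directed, since unlike the abelian $\N$ the word structure in $\N\star\Z/2\Z$ does not have an obvious lattice of common multiples. One must check that the algebraic functions $\rho_s(t)$, which are nested radicals alternating between the $t$ and $1-t$ variables, can always be dominated by a single larger extension in a way compatible with the $F$-involution; the compatibility of the bipartite radical towers under $t\leftrightarrow 1-t$ is where the delicate bookkeeping lies. The invertibility and group-extension claims of the third step, by contrast, follow formally from the injectivity of the endomorphisms and the direct-limit construction, exactly as in the cited treatment of \cite{Mar}.
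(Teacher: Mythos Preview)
Your approach is essentially the paper's: the key step in both is the injectivity of each $\sigma_s$ (as a composition of the injective $\sigma_{n_i}$ and the involution $\tau$), from which invertibility on the direct limit and the inclusion of all $\rho_s(f)$, in particular the functions $(1-t^r)^{r'}$, follow at once. The directedness obstacle you flag for the free-product divisibility order is not engaged in the paper's argument; it is implicitly bypassed because the $\bar\Q(\Sigma_s)$ are all realized as subfields of the ambient Puiseux field $\bar\Q\{\{t\}\}$ (cf.\ Remark~\ref{hCrem}), so the limit is simply their compositum there and no common upper bound in $\cS$ is needed.
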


\begin{proof} For all $s\in \cS$, the morphisms $\sigma_s$ acting on $\cB_\Q$ are injective.
This can be seen by explicitly writing \begin{gather*}\sigma_s(f)=
\tau^{\epsilon_0}\sigma_{n_1} \tau \sigma_{n_2}\cdots \tau \sigma_{n_k} \tau^{\epsilon_1}(f),\end{gather*}
for an element $s=s(\epsilon_0,\epsilon_1,\underline{n}) \in \cS$, where both the involution $\tau$
and the endomorphisms $\sigma_{n_i}$ are injective. The map $f\mapsto \sigma_s(f)$ gives an
embedding of $\bar\Q(t)$ inside $\bar\Q(\Sigma_s)$. Thus, on the limit~$\bar\Q(\Sigma_\cS)$ the
morphisms $\sigma_s$ become invertible and the field~$\bar\Q(\Sigma_\cS)$ contains all the
$\rho_s(f)$ for all $f\in \bar\Q(t)$. In particular, it then contains all the functions of the form
$(1-t^r)^{r'}$ for $r,r'\in \Q^*_+$.
\end{proof}

\begin{rem}\label{hCrem} By viewing $\bar\Q(\Sigma_s)$ as a subfield of
$\bar\Q\{ \{ t \}\}$ (or of $\bar\Q\{ \{ 1-t \}\}$), we can identify elements in $\bar\Q(\Sigma_s)$ with
analytic functions on the region $(0,1)_\C=\C\smallsetminus ((-\infty, 0]\cup [1,\infty))$ considered in~\cite{Ihara}, with a convergent Puiseux series expansion on $(0,1)$, where all
the $(1-t^r)^{r'}$, for $r,r'\in \Q^*_+$, are assumed to be positive real for $t\in (0,1)$.
\end{rem}

\subsection[Quantum statistical mechanics of $\Q^*_+$]{Quantum statistical mechanics of $\boldsymbol{\Q^*_+}$}

The definition of a quantum statistical mechanical system associated to this construction, relies on
the general setting for a quantum statistical mechanics for the group~$\Q^*_+$ described in~\cite{MarXu}, which we recall briefly here, adapted to our environment.

Let $\cH=\ell^2(\Q^*_+)$ be the Hilbert space with canonical orthonormal basis $\{ \epsilon_r \}_{r\in \Q^*_+}$. For $r=p_1^{n_1}\cdots p_k^{n_k}$ the prime decomposition of $r\in\Q^*_+$ with $p_i$
distinct primes and $n_i\in \Z$, consider the densely defined unbounded linear operator
\begin{gather}\label{HamQ}
 H \epsilon_r = (|n_1| \log(p_1)+\cdots+ |n_k| \log(p_k)) \epsilon_r .
\end{gather}
The operator ${\rm e}^{-\beta H}$ is trace class for $\beta >1$ with
\begin{gather*}
\Tr\big( {\rm e}^{-\beta H}\big) = \frac{\zeta(\beta)^2}{\zeta(2\beta)} =\sum_{n\geq 1} \frac{2^{\omega(n)}}{n^\beta}.
\end{gather*}
Here $\zeta(\beta)$ is the Riemann zeta function, and $\omega(n)$ denotes the number of pairwise distinct
prime factors of $n$.

Consider the algebra $C^*_r(\Q^*_+)$
acting via the left regular representation on $\ell^2(\Q^*_+)$. The smallest
sub-$C^*$-algebra $\cA(\Q^*_+)$ of $\cB\big(\ell^2(\Q^*_+)\big)$ that contains $C^*_r(\Q^*_+)$ and is
preserved by the time evolution $\sigma_t(X)={\rm e}^{-{\rm i}t H} X {\rm e}^{{\rm i}t H}$, with $H$ as
above, is generated by $C^*_r(\Q^*_+)$ and the projections $\Pi_{k,\ell}$
with $\Pi_{k,\ell} \epsilon_{a/b}=\epsilon_{a/b}$ when $k|a$ and $\ell |b$, for $(a,b)=1$,
and zero otherwise. The spectral projections of $H$ belong to this $C^*$-algebra, hence
the time evolution acts by inner automorphisms, see \cite{MarXu} for a more detailed
discussion.

\subsection[Quantum statistical mechanics of $\cS$ and $\Upsilon$]{Quantum statistical mechanics of $\boldsymbol{\cS}$ and $\boldsymbol{\Upsilon}$}

In our setting, we consider first the semigroup $\cS=\N\star \Z/2\Z$. It acts upon the
Hilbert space~$\ell^2(\cS)$ via the left regular representation. Thus, for $\{ \epsilon_s \}_{s\in \cS}$ the standard orthonormal basis of~$\ell^2(\cS)$ and for a given $s'\in \cS$ we have~$\mu_{s'} \epsilon_s =\epsilon_{s's}$.

\begin{lem}\label{HSlem} Consider the semigroup $C^*$-algebra $C^*_r(\cS)$ acting on $\ell^2(\cS)$ by the left regular representation. The transformations $\sigma_t(\mu_s)=n^{{\rm i}t} \mu_s$, where
$n=n_1\cdots n_k$ for $s=s(\epsilon_0,\epsilon_1,\underline{n})$ with $\underline{n}=(n_1,\ldots, n_k)$,
define a time evolution $\sigma\colon \R \to {\rm Aut}(C^*_r(\cS))$. Consider then the densely defined unbounded linear operator on the Hilbert space $\ell^2(\cS)$ given by
\begin{gather*}
H \epsilon_s =\log(n_1\cdots n_k) \epsilon_s,
\end{gather*}
for $s=s(\epsilon_0,\epsilon_1,\underline{n})$ with $\epsilon_0,\epsilon_1\in \{ 0,1 \}$ and $\underline{n}=(n_1,\ldots, n_k)$, with $n_i\in \N$, $n_i>1$.
The operator $H$ is the Hamiltonian that generates the time evolution $\sigma_t$
on the $C^*$-algebra $C^*_r(\cS)$. The partition function of the Hamiltonian $H$ is given by{\samepage
\begin{gather*}
Z(\beta) = \Tr\big({\rm e}^{-\beta H}\big) = \frac{4}{2-\zeta(\beta)},
\end{gather*}
where $\zeta(\beta)$ is the Riemann zeta function.}
\end{lem}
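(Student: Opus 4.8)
The plan is to reduce all three assertions of the lemma to a single structural fact: the weight $n(s):=n_1\cdots n_k$ attached to a reduced word $s=F^{\epsilon_0}\mu_{n_1}F\mu_{n_2}F\cdots F\mu_{n_k}F^{\epsilon_1}$ defines a semigroup homomorphism $n\colon \cS\to(\N,\times)$. First I would check this directly on the normal form of Definition~\ref{Ssemigr}: when two reduced words are concatenated, the only reduction happens at the junction, where either the collapse $F^2=\id$ occurs or two adjacent $\N$-syllables merge via $\mu_a\mu_b=\mu_{ab}$; in every case the total product of the $\N$-parts is preserved, so $n(ss')=n(s)n(s')$ and $n(F)=1$. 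Granting this, $t\mapsto n(s)^{{\rm i}t}$ is a character, $\sigma_t(\mu_s)=n(s)^{{\rm i}t}\mu_s$ respects products, and hence $\sigma_t\circ\sigma_{t'}=\sigma_{t+t'}$ defines a time evolution $\sigma\colon\R\to\Aut(C^*_r(\cS))$ acting on the $\N$-part as in the statement.

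Next I would verify that $H$ implements $\sigma_t$ in the left regular representation. Since $\mu_{s'}\epsilon_s=\epsilon_{s's}$ and ${\rm e}^{-{\rm i}tH}\epsilon_s=n(s)^{-{\rm i}t}\epsilon_s$, a one-line computation using the homomorphism property $n(s's)=n(s')n(s)$ gives
\[
{\rm e}^{{\rm i}tH}\mu_{s'}{\rm e}^{-{\rm i}tH}\epsilon_s=n(s's)^{{\rm i}t}n(s)^{-{\rm i}t}\epsilon_{s's}=n(s')^{{\rm i}t}\mu_{s'}\epsilon_s=\sigma_t(\mu_{s'})\epsilon_s.
\]
This identifies the densely defined operator $H\epsilon_s=\log(n_1\cdots n_k)\epsilon_s$ with the generating Hamiltonian of $\sigma_t$ on $\ell^2(\cS)$.

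For the partition function I would compute $\Tr({\rm e}^{-\beta H})=\sum_{s\in\cS}n(s)^{-\beta}$ by summing over normal forms. Organizing a reduced word by its interior $\N$-syllables $(n_1,\dots,n_k)$ with each $n_i\ge 2$, and by the two boundary bits $\epsilon_0,\epsilon_1\in\{0,1\}$ recording whether an $F$ sits at the left or right end, the weight is $n_1\cdots n_k$ and the four boundary choices contribute a factor $4$. Using $\sum_{n\ge 2}n^{-\beta}=\zeta(\beta)-1$ and resumming the geometric series gives
\[
Z(\beta)=\sum_{\epsilon_0,\epsilon_1\in\{0,1\}}\ \sum_{k\ge 0}\ \sum_{n_1,\dots,n_k\ge 2}(n_1\cdots n_k)^{-\beta}=4\sum_{k\ge 0}(\zeta(\beta)-1)^k=\frac{4}{1-(\zeta(\beta)-1)}=\frac{4}{2-\zeta(\beta)},
\]
valid in the region $\zeta(\beta)<2$ where the series converges, which is a right half-line in $\beta$ since $\zeta$ is decreasing with $\zeta(\beta)\to 1$.

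The generator computation and the resummation are routine; the step requiring care is the enumeration of $\cS$ by its normal form. The main obstacle I anticipate is making the correspondence between reduced words and the data $(\epsilon_0,\epsilon_1;n_1,\dots,n_k)$ clean enough that both the weight $n(s)$ and the boundary multiplicity $4$ are read off correctly — in particular, tracking the terminal $F$'s consistently with the collapse $F^2=\id$ so that the free-product structure $\cS=\N\star\Z/2\Z$ yields exactly the geometric series above (with its factor $4$) rather than a perturbed one, and confirming that this is the same factor $4$ appearing in the closed form $4/(2-\zeta(\beta))$.
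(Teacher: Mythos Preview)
Your proposal is correct and follows essentially the same approach as the paper's proof: both establish that $s\mapsto n(s)=n_1\cdots n_k$ is a semigroup homomorphism $\cS\to\N$, verify that $H$ implements $\sigma_t$ via the same one-line computation on basis vectors, and evaluate the partition function as $4\sum_{k\ge 0}(\zeta(\beta)-1)^k$. The only cosmetic difference is that the paper packages the enumeration by first noting that the multiplicity of the eigenvalue $\log n$ is $4P_n$, with $P_n$ the number of ordered factorizations of $n$ into factors $\ge 2$, before resumming; your direct sum over tuples $(n_1,\dots,n_k)$ is the same computation.
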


\begin{proof} In order to check that $\sigma_t(\mu_s)=n^{{\rm i}t} \mu_s$ defines a time evolution as above,
we first notice that the assignment $n\colon s\mapsto n(s)=n_1\cdots n_k$ for $s=s(\epsilon_0,\epsilon_1,\underline{n})$ with $\underline{n}=(n_1,\ldots, n_k)$ is a~semigroup homomorphism $\cS \to \N$.

In order to see that the operator $H$ is the infinitesimal generator of this time evolution we check that, for all $s\in \cS$ and for all $t\in \R$, we have $\sigma_t(\mu_s)={\rm e}^{{\rm i}t H} \mu_s {\rm e}^{-{\rm i}tH}$ as operators in $\cB(\ell^2(\cS))$.

Indeed, both sides act on a basis element $\epsilon_{s'}$ as $n(ss')^{{\rm i}t} n(s')^{-{\rm i}t} \epsilon_{ss'} =n(s)^{{\rm i}t} \epsilon_{ss'}$. The multipli\-ci\-ty of an eigenvalue $\log n$ of the Hamiltonian $H$ is $4 P_n$, where the factor $4$ accounts for the
four choices of $\epsilon_0,\epsilon_1\in \{ 0,1\}$ and $P_n$ is the total number of ordered factorizations
of $n$ into nontrivial positive integer factors.

Thus, the partition function equals
\begin{align*}
Z(\beta)& =\Tr\big({\rm e}^{-\beta H}\big)=4 \sum_n P_n n^{-\beta} =4\left(1+\sum_{k=1}^\infty \sum_{n=2}^\infty n^{-\beta} \sum_{n=n_1\cdots n_k} 1\right)\\
& = 4\left(1 + \sum_{k\geq 1} \prod_{i=1}^k \sum_{n_i\geq 2} n_i^{-\beta} \right)
 =4 \sum_{k=0}^\infty (\zeta(\beta)-1)^k =\frac{4}{2-\zeta(\beta)}.\tag*{\qed}
 \end{align*}\renewcommand{\qed}{}
\end{proof}

We now consider the enveloping group $\Upsilon=\Q^*_+\star \Z/2\Z$. We proceed as in the
quantum statistical mechanics of $\Q^*_+$ described in~\cite{MarXu} and recalled above.
For a positive rational number $r\in \Q^*_+$ with prime decomposition $r=p_1^{a_1}\cdots p_\ell^{a_\ell}$ where $p_i$ a pairwise distinct primes and $a_i\in \Z$, $a_i\neq 0$, let
\begin{gather}\label{nr}
n(r)=p_1^{|a_1|}\cdots p_\ell^{|a_\ell|}.
\end{gather}

\begin{lem}\label{HUlem} Consider on $\ell^2(\Upsilon)$ the densely defined unbounded linear operator
\begin{gather}\label{HU}
H \epsilon_\upsilon = \log (n(r_1)\cdots n(r_k)) \epsilon_\upsilon,
\end{gather}
for an element $\upsilon=\upsilon(\epsilon_0,\epsilon_1,\underline{r})$ with
$\epsilon_0,\epsilon_1\in \{0,1\}$ and $\underline{r}=(r_1,\ldots, r_k)$ with $r_i\in \Q^*_+$,
$r_i\neq 1$. The operator ${\rm e}^{-\beta H}$ satisfies
\begin{gather*}
Z(\beta) =\Tr\big({\rm e}^{-\beta H}\big) =\frac{4 \zeta(2\beta)}{2 \zeta(2\beta) - \zeta(\beta)^2}.
\end{gather*}
\end{lem}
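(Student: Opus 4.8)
The plan is to follow the proof of Lemma~\ref{HSlem} almost verbatim, replacing the semigroup $\cS=\N\star\Z/2\Z$ by the group $\Upsilon=\Q^*_+\star\Z/2\Z$ and replacing the per-letter weight $\sum_{n\geq 2}n^{-\beta}=\zeta(\beta)-1$ by the analogous sum over rational letters with the weight $n(r)$. First I would record that the operator $H$ in~\eqref{HU} is diagonal in the basis $\{\epsilon_\upsilon\}_{\upsilon\in\Upsilon}$, so that $\Tr(e^{-\beta H})$ is a sum over reduced words. Writing a generic element in normal form $\upsilon=\upsilon(\epsilon_0,\epsilon_1,\underline r)=F^{\epsilon_0}r_1 F r_2 F\cdots F r_k F^{\epsilon_1}$, with $\epsilon_0,\epsilon_1\in\{0,1\}$ and $r_i\in\Q^*_+\smallsetminus\{1\}$, the eigenvalue of $H$ is $\log(n(r_1)\cdots n(r_k))$; exactly as in Lemma~\ref{HSlem}, its multiplicity factors as $4$ (the four choices of $\epsilon_0,\epsilon_1$) times the number of realizations of a given value as an ordered product of weights $n(r_i)$.

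Since the eigenvalue depends only on the middle letters $r_1,\dots,r_k$, which range independently over $\Q^*_+\smallsetminus\{1\}$, I would then factor the trace as the geometric-type series
\begin{gather*}
Z(\beta)=4\sum_{k\geq 0}\Bigg(\sum_{r\in\Q^*_+\smallsetminus\{1\}} n(r)^{-\beta}\Bigg)^{k},
\end{gather*}
in direct analogy with the identity $4\sum_k(\zeta(\beta)-1)^k$ used in the proof of Lemma~\ref{HSlem}.

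The key new step is the evaluation of the single-letter sum by an Euler product. Since $\Q^*_+$ is free abelian on the primes and $n\big(\prod_p p^{a_p}\big)=\prod_p p^{|a_p|}$, I would compute
\begin{gather*}
\sum_{r\in\Q^*_+} n(r)^{-\beta}=\prod_p \sum_{a\in\Z} p^{-|a|\beta}
=\prod_p\Big(1+\tfrac{2 p^{-\beta}}{1-p^{-\beta}}\Big)
=\prod_p \frac{1+p^{-\beta}}{1-p^{-\beta}}=\frac{\zeta(\beta)^2}{\zeta(2\beta)},
\end{gather*}
using $\prod_p(1-p^{-\beta})^{-1}=\zeta(\beta)$ and $\prod_p(1+p^{-\beta})=\zeta(\beta)/\zeta(2\beta)$. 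This reproduces the trace $\zeta(\beta)^2/\zeta(2\beta)$ of the $\Q^*_+$-system of~\cite{MarXu} recalled above, a useful consistency check. Removing the $r=1$ term then gives $\sum_{r\in\Q^*_+\smallsetminus\{1\}} n(r)^{-\beta}=\zeta(\beta)^2/\zeta(2\beta)-1$. Summing the geometric series, valid once $\beta$ is large enough that $\zeta(\beta)^2/\zeta(2\beta)<2$ (which holds since the ratio tends to $1$ as $\beta\to\infty$), yields
\begin{gather*}
Z(\beta)=\frac{4}{\,1-\big(\zeta(\beta)^2/\zeta(2\beta)-1\big)\,}
=\frac{4}{\,2-\zeta(\beta)^2/\zeta(2\beta)\,}
=\frac{4\zeta(2\beta)}{2\zeta(2\beta)-\zeta(\beta)^2},
\end{gather*}
as claimed.

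The step I expect to be the main obstacle is the Euler product above: the crucial difference from the semigroup computation of Lemma~\ref{HSlem} is that the exponents $a_p$ now range over all of $\Z$ (because $\Q^*_+$ is a group rather than the monoid $\N$), and it is precisely this two-sided range that produces the factor $(1+p^{-\beta})/(1-p^{-\beta})$ and hence the extra $\zeta(2\beta)$. A secondary point requiring care is the multiplicity bookkeeping, namely checking that the four $(\epsilon_0,\epsilon_1)$-choices contribute a uniform factor of $4$ throughout the geometric series, consistently with the convention already adopted in Lemma~\ref{HSlem}.
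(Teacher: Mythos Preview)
Your proof is correct and follows essentially the same approach as the paper's: factor out the $4$ from the $(\epsilon_0,\epsilon_1)$ choices, reduce the trace to a geometric series in the single-letter sum, identify that sum as $\zeta(\beta)^2/\zeta(2\beta)-1$, and sum. The only cosmetic difference is that the paper phrases the single-letter sum as $\sum_{n\geq 2} 2^{\omega(n)} n^{-\beta}$ (grouping the $r$'s by $n(r)=n$ and quoting the Dirichlet identity recalled just before the lemma), whereas you compute $\sum_{r\in\Q^*_+} n(r)^{-\beta}$ directly via the Euler product; these are the same computation.
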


\begin{proof} The eigenvalue $\log n$ of $H$ has multiplicity $4 \sum\limits_{n=n_1\cdots n_k} 2^{\omega(n_1)+\cdots +\omega(n_k)}$ where the factor of $4$ accounts for the choices of $\epsilon_0,\epsilon_1\in \{0,1\}$ and
each factor $2^{\omega(n_i)}$, with $\omega(n_i)$ the number of distinct prime factors of $n_i$, accounts
for all the $r_i\in \Q^*_+$ with $n(r_i)=n_i$.

Thus, we obtain
\begin{align*}
 \Tr\big({\rm e}^{-\beta H}\big) & = 4 \left(1+\sum_{k\geq 1}
\sum_{n=n_1\cdots n_k} 2^{\omega(n_1)+\cdots +\omega(n_k)} n^{-\beta} \right)\\
& = 4 \left(1+ \sum_{k\geq 1} \prod_{i=1}^k \sum_{n_i\geq 2} \frac{2^{\omega(n_i)}}{n_i^\beta} \right)=
4 \left(1+ \sum_k \frac{\zeta(\beta)^2}{\zeta(2\beta)} -1\right)^k = \frac{4}{2-\frac{\zeta(\beta)^2}{\zeta(2\beta)}}.\tag*{\qed}
\end{align*}\renewcommand{\qed}{}
\end{proof}

As in the case of the quantum statistical mechanics of $\Q^*_+$, the time evolution on $\cB\big(\ell^2(\Upsilon)\big)$ ge\-ne\-rated by the Hamiltonian~$H$ of \eqref{HU}, no longer preserves the reduced group $C^*$-algebra~$C^*_r(\Upsilon)\!$ acting on $\ell^2(\Upsilon)$ through the left regular representation, since we have
$n(r r')\!=\!n(r) n(r') (b,u) (a,v)\!$ for $r=u/v$ with $(u,v)=1$ and $r'=a/b$ with $(a,b)=1$. This implies the following behavior of the time evolution on the generators of $C^*_r(\Upsilon)$.

\begin{lem}\label{innertime} For $\underline{m}=(m_1,\ldots, m_k)$ and
$\underline{\ell}=(\ell_1,\ldots,\ell_k)$, let $\Pi_{\underline{m},\underline{\ell}}$ denote the projection on~$\ell^2(\Upsilon)$ given by $\Pi_{\underline{m},\underline{\ell}} \mu_\upsilon =\mu_\upsilon$ if
$\upsilon=\upsilon(\epsilon_0,\epsilon_1,\underline{r})$ with $\underline{r}=(r_1,\ldots,r_k)$ where
$r_i=a_i/b_i$, and $(a_i,b_i)$ are coprime integers such that $k_i | b_i$ and $\ell_i | a_i$.

The sub-$C^*$-algebra $\cA_\Upsilon$ of $\cB(\ell^2(\Upsilon))$ generated by $C^*_r(\Upsilon)$
and by the projections $\Pi_{\underline{m},\underline{\ell}}$ is stable under the time evolution
$\sigma_t\colon X \mapsto {\rm e}^{{\rm i}tH} X {\rm e}^{-{\rm i}tH}$ on $\cB\big(\ell^2(\Upsilon)\big)$. This time evolution acts on $\cA_\Upsilon$ by inner automorphisms.
\end{lem}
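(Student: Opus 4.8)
The plan is to establish the two assertions of the lemma---stability of $\cA_\Upsilon$ under $\sigma_t$, and innerness of the resulting automorphisms---by first computing $\sigma_t$ explicitly on the two families of generators of $\cA_\Upsilon$, and then exhibiting the spectral projections of $H$ as elements of $\cA_\Upsilon$.

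First I would compute $\sigma_t$ on the generators $\mu_\upsilon$ of $C^*_r(\Upsilon)$. Writing $N(\upsilon)=n(r_1)\cdots n(r_k)$ for $\upsilon=\upsilon(\epsilon_0,\epsilon_1,\underline r)$, the Hamiltonian is diagonal with $H\epsilon_\upsilon=\log N(\upsilon)\,\epsilon_\upsilon$, so in the left regular representation a direct computation gives
\[
\sigma_t(\mu_{\upsilon'})\,\epsilon_\upsilon=\left(\frac{N(\upsilon'\upsilon)}{N(\upsilon)}\right)^{{\rm i}t}\epsilon_{\upsilon'\upsilon}.
\]
Were $N$ a semigroup homomorphism this ratio would be the constant $N(\upsilon')^{{\rm i}t}$ and $\sigma_t$ would preserve $C^*_r(\Upsilon)$ outright; the content of the lemma is precisely that it is not. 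The only failure of multiplicativity occurs at the junction of the reduced words for $\upsilon'$ and $\upsilon$ in $\N\star\Z/2\Z$: when the last rational letter $r=u/v$ of $\upsilon'$ is brought adjacent to the first rational letter $r'=a/b$ of $\upsilon$ (after any cancellation of an adjacent pair $F\cdot F$) the two combine, and one picks up exactly the correction factor $n(rr')/(n(r)n(r'))=(b,u)(a,v)$ recorded just before the lemma. This factor depends on $\upsilon$ only through the gcds of the fixed integers $u,v$ with the numerator and denominator of the first rational letter of $\upsilon$.

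The next step is to recognize this correction as an element of $\cA_\Upsilon$. For fixed $u$, the condition $(u,b)=d$, as $d$ ranges over the finitely many divisors of $u$, is a divisibility condition on the denominator of the first rational letter of $\upsilon$, hence a finite M\"obius combination of the projections $\Pi_{\underline m,\underline\ell}$; the same holds for $(a,v)$. Therefore $\sigma_t(\mu_{\upsilon'})$ is a finite sum $\sum_{d,e}c_{d,e}(t)\,\mu_{\upsilon'}\,P_{d,e}$, with scalar coefficients and with projections $P_{d,e}$ in the abelian algebra generated by the $\Pi_{\underline m,\underline\ell}$, so $\sigma_t(\mu_{\upsilon'})\in\cA_\Upsilon$. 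Since $H$ and every $\Pi_{\underline m,\underline\ell}$ are simultaneously diagonal in the basis $\{\epsilon_\upsilon\}$, the unitary ${\rm e}^{{\rm i}tH}$ commutes with each projection, giving $\sigma_t(\Pi_{\underline m,\underline\ell})=\Pi_{\underline m,\underline\ell}$. The two computations together yield stability of $\cA_\Upsilon$ under $\sigma_t$.

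Finally, for innerness I would show that the spectral projections of $H$ lie in $\cA_\Upsilon$, exactly as in the quantum statistical mechanics of $\Q^*_+$ recalled above. The eigenvalue $\log n$ of $H$ has a finite-dimensional eigenspace, spanned by those $\epsilon_\upsilon$ with $N(\upsilon)=\prod_i a_ib_i=n$, because $n$ has only finitely many ordered factorizations into integers $\geq 2$ and each factor splits into a coprime numerator and denominator in finitely many ways. Grouping these $\upsilon$ by their rational tuple $\underline r$ and summing over the four choices of $(\epsilon_0,\epsilon_1)$, each block is again a finite M\"obius combination of the $\Pi_{\underline m,\underline\ell}$, so the spectral projection $P_n$ belongs to $\cA_\Upsilon$; then ${\rm e}^{{\rm i}tH}=\sum_n n^{{\rm i}t}P_n$ is a unitary in the multiplier algebra of $\cA_\Upsilon$ implementing $\sigma_t$, which is therefore inner. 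The main obstacle is the junction combinatorics in the free product: one must verify that the reduction of $\upsilon'\upsilon$---including the cascading cancellations that occur when the combined rational letter equals $1$ and frees further adjacent copies of $F$---alters $N$ only by junction-local gcd factors of the stated type, so that the correction provably lands in the algebra generated by the $\Pi_{\underline m,\underline\ell}$ rather than in some larger diagonal algebra.
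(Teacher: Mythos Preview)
Your proposal is correct and follows essentially the same route as the paper: compute $\sigma_t(\mu_{\upsilon'})$ via the eigenvalue ratio $N(\upsilon'\upsilon)/N(\upsilon)$, decompose the resulting correction factor as a finite combination of the divisibility projections $\Pi_{\underline m,\underline\ell}$, observe that the projections themselves commute with $e^{itH}$, and finish by showing the spectral projections of $H$ lie in $\cA_\Upsilon$. The paper's proof does exactly this, writing down the one-line decomposition
\[
\sigma_t(\mu_s)=\sum_{m_i\mid u_i,\ \ell_i\mid v_i} n(s)^{it}(m_1\cdots m_k)^{it}(\ell_1\cdots\ell_k)^{it}\,\mu_s\,\Pi_{\underline m,\underline\ell}
\]
and then invoking the $\Q^*_+$ argument of \cite{MarXu} for innerness.

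Your treatment is actually more careful than the paper's on one point: the paper simply asserts the displayed formula without discussing the free-product junction at all, whereas you isolate the mechanism (the defect of multiplicativity $n(rr')=n(r)n(r')/((b,u)(a,v))^2$ arising when adjacent rational letters merge) and explicitly flag the cascading-cancellation issue as the combinatorial subtlety to verify. That caveat is well placed; the paper's argument leaves it implicit.
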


\begin{proof} Consider an element $s=s(\epsilon_0,\epsilon_1,\underline{r})$ with $\underline{r}=(r_1,\ldots, r_k)$ and $r_i=a_i/b_i$ where $u_i,v_i\in \N$ with $(u_i,v_i)=1$. We have
\begin{gather*} {\rm e}^{{\rm i}tH} \mu_s {\rm e}^{-{\rm i}tH} \epsilon_{s'} = n(ss')^{{\rm i}t} n(s')^{-{\rm i}t} \mu_s\epsilon_{s'} =
\sum_{m_i | u_i, \ell_i | v_i} n(s)^{{\rm i}t} (m_1\cdots m_k)^{{\rm i}t} (\ell_1\cdots \ell_k)^{{\rm i}t} \mu_s \Pi_{\underline{m},\underline{\ell}}. \end{gather*}
This shows that the sub-$C^*$-algebra of $\cB(\ell^2(\Upsilon))$ generated by $C^*_r(\Upsilon)$
and by the projections~$\Pi_{\underline{m},\underline{\ell}}$ is stable under the time evolution $\sigma_t$.
As in the case of $\Q^*_+$ discussed in~\cite{MarXu} the spectral projections of~$H$ are in the algebra
$\cA_\Upsilon$ hence the time evolution is acting by inner automorphisms on~$\cA_\Upsilon$.
\end{proof}

\subsection{Quantum statistical mechanics and the Drinfeld--Ihara embedding}

We now consider the crossed product algebra $\bar\Q(\Sigma_\cS)\rtimes \Upsilon$ introduced in
Proposition~\ref{barQS}, and its quantum statistical properties.

By Remark~\ref{hCrem}, elements $h\in \bar\Q(\Sigma_\cS)$ determine analytic functions
$h_\C$ in the region $(0,1)_\C$ with convergent Puiseux series.

\begin{defn}\label{repQS}Given a function $h\in \bar\Q(\Sigma_\cS)$ and the choice of a point $\tau\in (0,1)$, we define a~linear operator $\pi_\tau(h)$ on $\ell^2(\Upsilon)$ by setting
\begin{gather}\label{pitauh}
\pi_\tau(h) \epsilon_\upsilon = \sigma_\upsilon(h)_\C (\tau) \epsilon_\upsilon ,
\end{gather}
where for $\upsilon=\upsilon(\epsilon_0,\epsilon_1,\underline{r})$, with
$\underline{r}=(r_1,\ldots, r_k)$, we put
\begin{gather*}
\sigma_\upsilon(h)=\tau^{\epsilon_0}\circ \sigma_{r_1}\circ \tau \circ \cdots \circ \tau\circ \sigma_{r_k}\circ \tau^{\epsilon_1} (h) .
\end{gather*}
\end{defn}

\begin{rem}\label{boundh}If the analytic function $h_\C$ is bounded on the unit interval,
then for any choice of $\tau\in (0,1)$ the operator of~\eqref{pitauh} is bounded, $\pi_\tau(h)\in \cB\big(\ell^2(\Upsilon)\big)$.
\end{rem}

\begin{lem}\label{crossYact} For any choice of a point $\tau\in (0,1)$, the operators $\pi_\tau(h)$ from \eqref{pitauh} determine an action of the crossed product algebra $\bar\Q(\Sigma_\cS)\rtimes \Upsilon$ on the Hilbert space $\ell^2(\Upsilon)$.
\end{lem}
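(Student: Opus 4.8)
The plan is to exhibit $(\pi_\tau,\mu)$ as a covariant pair for the $\Upsilon$-action $\sigma$ on $\bar\Q(\Sigma_\cS)$ constructed in Proposition~\ref{barQS}, so that its integrated form is the sought representation of $\bar\Q(\Sigma_\cS)\rtimes\Upsilon$ on $\ell^2(\Upsilon)$. Recall that the crossed product is generated by the function algebra $\bar\Q(\Sigma_\cS)$ together with the unitaries $\mu_\upsilon$, $\upsilon\in\Upsilon$, subject to $\mu_\upsilon^*h\mu_\upsilon=\sigma_\upsilon(h)$. I let the group act on $\ell^2(\Upsilon)$ by the left regular representation $\mu_\upsilon\epsilon_{\upsilon'}=\epsilon_{\upsilon\upsilon'}$ and let each $h$ act by the diagonal operator $\pi_\tau(h)$ of \eqref{pitauh}. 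Three things must be verified: that $\pi_\tau$ is an algebra homomorphism, that $\upsilon\mapsto\mu_\upsilon$ is a unitary representation of the group $\Upsilon$, and that the two are intertwined by the covariance relation.

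First I would check that $h\mapsto\pi_\tau(h)$ is a homomorphism of $\bar\Q$-algebras. Each $\pi_\tau(h)$ is diagonal in the basis $\{\epsilon_\upsilon\}$ with eigenvalue $\sigma_\upsilon(h)_\C(\tau)$, so it suffices to see that, for every fixed $\upsilon$, the scalar $h\mapsto\sigma_\upsilon(h)_\C(\tau)$ is a $\C$-algebra character. This is immediate: by Definition~\ref{repQS} the map $\sigma_\upsilon$ is a composition of the automorphisms $\sigma_{r_i}$ and the involution $\tau$, hence a ring homomorphism of $\bar\Q(\Sigma_\cS)$, and by Remark~\ref{hCrem} the image $\sigma_\upsilon(h)$ has a convergent Puiseux expansion defining an analytic function on $(0,1)_\C$, so evaluation at the point $\tau\in(0,1)$ is again a ring homomorphism (well defined since, for the chosen branches, no denominator vanishes on $(0,1)_\C$). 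Boundedness of the resulting operators, where it is needed, is exactly Remark~\ref{boundh}.

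Next, because we have passed from the semigroup $\cS$ to its enveloping group $\Upsilon=\Q^*_+\star\Z/2\Z$, the assignment $\upsilon\mapsto\mu_\upsilon$ is the left regular representation of a genuine group and hence a unitary representation on $\ell^2(\Upsilon)$; this is exactly where the invertibility of $\sigma$ established in Proposition~\ref{barQS} enters, and it is the reason the construction is carried out on $\ell^2(\Upsilon)$ rather than on $\ell^2(\cS)$. The main step is then the covariance relation $\mu_\upsilon^*\pi_\tau(h)\mu_\upsilon=\pi_\tau(\sigma_\upsilon(h))$. Applying the left side to a basis vector gives $\mu_\upsilon^*\pi_\tau(h)\mu_\upsilon\epsilon_{\upsilon'}=\mu_\upsilon^*\pi_\tau(h)\epsilon_{\upsilon\upsilon'}=\sigma_{\upsilon\upsilon'}(h)_\C(\tau)\,\epsilon_{\upsilon'}$, whereas the right side gives $\sigma_{\upsilon'}(\sigma_\upsilon(h))_\C(\tau)\,\epsilon_{\upsilon'}$, so the identity reduces to the composition law $\sigma_{\upsilon\upsilon'}=\sigma_{\upsilon'}\circ\sigma_\upsilon$. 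This antihomomorphism property is forced by the homomorphism $\mu_{\upsilon\upsilon'}=\mu_\upsilon\mu_{\upsilon'}$ together with $\sigma_\upsilon(h)=\mu_\upsilon^*h\mu_\upsilon$, and is precisely the statement that the word-level formula for $\sigma_\upsilon$ descends to a well-defined $\Upsilon$-action (Proposition~\ref{barQS}). With the covariant pair in hand, its integrated form is the desired representation of $\bar\Q(\Sigma_\cS)\rtimes\Upsilon$.

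I expect the genuine obstacle to be bookkeeping rather than analysis: one must confirm that the composition formula $\sigma_\upsilon=\tau^{\epsilon_0}\circ\sigma_{r_1}\circ\tau\circ\cdots\circ\tau^{\epsilon_1}$, written on reduced words of the free product $\Q^*_+\star\Z/2\Z$, is compatible with the relations $F^2=\id$ and with the fusion of adjacent $\Q^*_+$-letters, so that $\sigma$ is a genuine action of $\Upsilon$ and the identity $\sigma_{\upsilon\upsilon'}=\sigma_{\upsilon'}\circ\sigma_\upsilon$ survives the reduction of the concatenated word $\upsilon\upsilon'$. Since this compatibility is guaranteed by Proposition~\ref{barQS}, the remaining verification is the direct and routine computation on basis vectors indicated above.
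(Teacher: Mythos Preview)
Your proposal is correct and follows essentially the same route as the paper: let $\Upsilon$ act via the left regular representation, and verify the covariance relation $\mu_\upsilon^*\pi_\tau(h)\mu_\upsilon=\pi_\tau(\sigma_\upsilon(h))$ by a direct computation on basis vectors, which reduces to the composition law $\sigma_{\upsilon\upsilon'}=\sigma_{\upsilon'}\circ\sigma_\upsilon$. You are somewhat more explicit than the paper in checking that $h\mapsto\pi_\tau(h)$ is an algebra homomorphism and in flagging the word-reduction bookkeeping in the free product, both of which the paper leaves implicit; the paper, on the other hand, also records the companion relation $\mu_\upsilon\pi_\tau(h)\mu_\upsilon^*=\pi_\tau(\rho_\upsilon(h))$, which in the group setting follows from the one you checked.
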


\begin{proof} We let the generators $\mu_\upsilon$ with $\upsilon\in \Upsilon$ of the algebra act on
the Hilbert space through the left regular representation
$\mu_\upsilon \epsilon_{\upsilon'}=\epsilon_{\upsilon\upsilon'}$. It is then sufficient to check that
the operators $\pi_\tau(h)$ of~\eqref{pitauh} satisfy the relations
\begin{gather*}\mu_\upsilon^* \pi_\tau(h) \mu_\upsilon =\pi_\tau(\sigma_\upsilon(h)) \qquad \text{and} \qquad \mu_\upsilon \pi_\tau(h) \mu_\upsilon^* = \pi_\tau(\rho_\upsilon(h)) . \end{gather*}
We have $\epsilon_{\upsilon\upsilon'}=\mu_\upsilon \epsilon_{\upsilon'}$ and
\begin{gather*}\pi_\tau(h) \mu_\upsilon \epsilon_{\upsilon'} =h(\sigma_{\upsilon \upsilon'}(\tau)) \epsilon_{\upsilon\upsilon'} =
 \mu_\upsilon \sigma_{\upsilon \upsilon'}(h)(\tau) \epsilon_{\upsilon'} = \mu_\upsilon \pi_\tau(\sigma_\upsilon(h)) \epsilon_{\upsilon'}.\end{gather*}
 The other relation is checked similarly.
\end{proof}

The same argument as in Lemma~\ref{innertime} then gives the following.

\begin{lem}\label{timeevSigmaSY}
The time evolution $\sigma_t(X)={\rm e}^{{\rm i}tH} X {\rm e}^{-{\rm i}tH}$ on $\cB(\ell^2(\Upsilon))$ generated by
the Hamiltonian $H$ of \eqref{HU} induces a time evolution on the algebra generated by
$\bar\Q(\Sigma_\cS)\rtimes \Upsilon$ and the projections $\Pi_{\underline{k},\underline{\ell}}$ of
Lemma~{\rm \ref{innertime}}. This time evolution acts as the identity on $\bar\Q(\Sigma_\cS)$.
\end{lem}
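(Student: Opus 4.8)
The plan is to exploit the fact that both the Hamiltonian $H$ of \eqref{HU} and the representation operators $\pi_\tau(h)$ of Definition~\ref{repQS} are diagonal in the canonical orthonormal basis $\{\epsilon_\upsilon\}_{\upsilon\in\Upsilon}$ of $\ell^2(\Upsilon)$. Indeed $H\epsilon_\upsilon=\log(n(r_1)\cdots n(r_k))\epsilon_\upsilon$ is diagonal, so $e^{itH}$ is diagonal with entries $(n(r_1)\cdots n(r_k))^{it}$; and $\pi_\tau(h)\epsilon_\upsilon=\sigma_\upsilon(h)_\C(\tau)\epsilon_\upsilon$ is visibly diagonal as well. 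Two diagonal operators in the same basis commute, so one gets $e^{itH}\pi_\tau(h)e^{-itH}=\pi_\tau(h)$ for all $h\in\bar\Q(\Sigma_\cS)$ and all $t\in\R$. This already yields the second assertion of the lemma, namely that the induced time evolution restricts to the identity on the subalgebra $\bar\Q(\Sigma_\cS)$.

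For the first assertion I would verify $\sigma_t$-stability of the generating algebra by inspecting each type of generator. The crossed product $\bar\Q(\Sigma_\cS)\rtimes\Upsilon$ is generated by the operators $\pi_\tau(h)$ and the left-regular isometries $\mu_\upsilon$ subject to the relations of Lemma~\ref{crossYact}, and to these one adjoins the projections $\Pi_{\underline{m},\underline{\ell}}$. The $\pi_\tau(h)$ are fixed by $\sigma_t$ by the previous paragraph. The behaviour of $\sigma_t$ on the $\mu_\upsilon$ is precisely the computation already carried out in Lemma~\ref{innertime}: because $n$ is multiplicative only up to the gcd/lcm correction, $\sigma_t(\mu_\upsilon)$ is a finite linear combination, with scalar coefficients depending on $t$, of the elements $\mu_\upsilon\Pi_{\underline{m},\underline{\ell}}$, which lie in the algebra. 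Finally, the projections $\Pi_{\underline{m},\underline{\ell}}$ are themselves diagonal in $\{\epsilon_\upsilon\}$, hence commute with $e^{itH}$, so $\sigma_t(\Pi_{\underline{m},\underline{\ell}})=\Pi_{\underline{m},\underline{\ell}}$.

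Combining these observations, $\sigma_t$ sends every generator of the algebra generated by $\bar\Q(\Sigma_\cS)\rtimes\Upsilon$ together with the $\Pi_{\underline{m},\underline{\ell}}$ back into that algebra; hence the algebra is $\sigma_t$-invariant, and the restriction of the one-parameter group $X\mapsto e^{itH}Xe^{-itH}$ furnishes the desired time evolution. I do not anticipate a genuine obstacle here: the whole argument rests on the simultaneous diagonality of $H$, the arithmetic representation $\pi_\tau$, and the projections $\Pi_{\underline{m},\underline{\ell}}$, so the only non-formal input—the non-multiplicativity correction governing the $\mu_\upsilon$—has already been supplied by Lemma~\ref{innertime}. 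The single point to state with care is that adjoining the diagonal, $\sigma_t$-fixed subalgebra $\bar\Q(\Sigma_\cS)$ cannot spoil the stability already established for $C^*_r(\Upsilon)$ and the $\Pi_{\underline{m},\underline{\ell}}$, which is exactly why ``the same argument as in Lemma~\ref{innertime}'' applies verbatim.
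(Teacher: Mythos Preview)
Your proposal is correct and follows exactly the approach the paper intends: the paper gives no separate proof for this lemma, merely noting that ``the same argument as in Lemma~\ref{innertime} then gives the following,'' and you have spelled out precisely that argument, namely that the diagonal operators $\pi_\tau(h)$ and $\Pi_{\underline{m},\underline{\ell}}$ commute with $e^{itH}$ while the $\mu_\upsilon$ are handled verbatim by Lemma~\ref{innertime}.
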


\begin{lem}\label{GactSigmaSY} Let $\cA_{\Q,\Sigma_\cS,\Upsilon}$ denote the algebra generated by
$\bar\Q(\Sigma_\cS)\rtimes \Upsilon$ and the projections $\Pi_{\underline{k},\underline{\ell}}$,
with the time evolution $\sigma_t$ generated by the Hamiltonian of~\eqref{HU}. The absolute
Galois group $G={\rm Gal}(\bar\Q/\Q)$ acts by symmetries of the dynamical system
$(\cA_{\Q,\Sigma_\cS,\Upsilon},\sigma_t)$.
\end{lem}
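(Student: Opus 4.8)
The plan is to exhibit the $G$-action explicitly on generators and check that it gives automorphisms commuting with $\sigma_t$, following the same pattern that makes the Galois action a symmetry in the Bost--Connes case (compare Proposition~\ref{interKMS}): the group that acts will be nontrivial exactly on the part of the algebra where the time evolution is trivial, and conversely.

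First I would define the action. For $\gamma\in G={\rm Gal}(\bar\Q/\Q)$, let $\gamma$ act on $\bar\Q(\Sigma_\cS)$ by the Galois action on the $\bar\Q$-coefficients of the Puiseux series (equivalently, on the coefficients of the defining polynomials as in Section~\ref{section2.1}), fixing the variable $t$, and let $\gamma$ act trivially on the generators $\mu_\upsilon$, $\mu_\upsilon^*$ and on the projections $\Pi_{\underline{k},\underline{\ell}}$. One checks that this is well defined on each $\bar\Q(\Sigma_s)$ of Lemma~\ref{barQSigmas}: the generator $\rho_s(t)$ has $\Q$-rational Puiseux coefficients and satisfies a polynomial relation over $\Q(t)$, so the Galois closure $\bar\Q(\Sigma_s)$ is the splitting field of a $\Q(t)$-polynomial and is therefore stable under $\gamma$, which restricts to a field automorphism fixing $\Q(t)$. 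Since the transition maps of the direct system $\bar\Q(\Sigma_\cS)=\varinjlim_s \bar\Q(\Sigma_s)$ are the $\Q$-rational embeddings $\sigma_s$, these restricted actions are compatible and pass to the limit.

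Next I would verify that $\gamma$ respects the relations of $\cA_{\Q,\Sigma_\cS,\Upsilon}$, hence defines an algebra automorphism. The group relations among the $\mu_\upsilon$ are preserved since these generators are fixed. The crossed-product relations $\mu_\upsilon^*\,h\,\mu_\upsilon=\sigma_\upsilon(h)$ and $\mu_\upsilon\,h\,\mu_\upsilon^*=\rho_\upsilon(h)$ of Lemma~\ref{crossYact} are preserved precisely because $\gamma$ commutes with the endomorphisms $\sigma_\upsilon$ and $\rho_\upsilon$: these are built by composing the substitutions $\sigma_n\colon f(t)\mapsto f(t^n)$ and $\tau\colon f(t)\mapsto f(1-t)$, which act on the variable and leave the $\bar\Q$-coefficients untouched, so $\gamma\,\sigma_\upsilon(h)=\sigma_\upsilon(\gamma h)$ for all $h\in\bar\Q(\Sigma_\cS)$. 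The relations defining the projections $\Pi_{\underline{k},\underline{\ell}}$ are purely combinatorial divisibility conditions on the group element $\upsilon$ and are thus manifestly $\gamma$-invariant.

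Finally I would check compatibility with the time evolution, $\gamma\circ\sigma_t=\sigma_t\circ\gamma$. By Lemma~\ref{timeevSigmaSY} the evolution $\sigma_t$ acts as the identity on $\bar\Q(\Sigma_\cS)$ and nontrivially only on the $\mu_\upsilon$ (through the phases $n(\,\cdot\,)^{{\rm i}t}$ coming from~\eqref{HU}); conversely $\gamma$ acts trivially on the $\mu_\upsilon$ and the projections and can be nontrivial only on $\bar\Q(\Sigma_\cS)$. Hence the two commute on all generators, and therefore on all of $\cA_{\Q,\Sigma_\cS,\Upsilon}$, so $G$ acts by symmetries of $(\cA_{\Q,\Sigma_\cS,\Upsilon},\sigma_t)$. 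I expect the only genuine subtlety to be the first step, namely confirming that $\gamma$ preserves each field $\bar\Q(\Sigma_s)$ and that the restricted actions are compatible along the direct limit; once the $\Q$-rationality of the substitutions $\sigma_n$ and $\tau$ is in hand, commutation with both the $\Upsilon$-action and the time evolution is formal.
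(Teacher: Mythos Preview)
Your proposal is correct and follows essentially the same approach as the paper: act on $\bar\Q(\Sigma_\cS)$ through the Galois action on $\bar\Q$-coefficients, extend trivially to the $\mu_\upsilon$ and the projections, and then observe that $\gamma$ and $\sigma_t$ commute because they are nontrivial on disjoint sets of generators. You in fact provide more detail than the paper does---in particular the check that the $\Q$-rational substitutions $\sigma_n$, $\tau$ commute with the Galois action and hence that the crossed-product relations are preserved---whereas the paper's proof simply invokes the morphism $G\to G_\cS$ and states the commutation.
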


\begin{proof} The absolute Galois group $G={\rm Gal}\big(\bar\Q/\Q\big)$ acts on $\bar\Q(\Sigma_\cS)$
through the morphism $G \to G_\cS={\rm Gal}\big(\bar\Q(\Sigma_\cS)/\Q\big)$. Extending this
action by the trivial action on the generators $\mu_\upsilon$ and $\Pi_{\underline{k},\underline{\ell}}$,
we get an action on the algebra $\cA_{\Q,\Sigma_\cS,\Upsilon}$ with the property that
$\gamma \circ \sigma_t = \sigma_t \circ \gamma$, for all $\gamma\in G$ and all $t\in \R$.
\end{proof}

For $h\in \bar\Q(\Sigma_\cS)$ we consider as in Remark~\ref{hCrem} the associated analytic
function $h_\C$ on $(0,1)_\C$. Then, proceeding as in \cite{Ihara}, one can consider the
action of $G$ on $\Q\{\{ t \}\}$ and on $\bar\Q\{ \{ 1-t \}\}$ via the action on the Puiseux
coefficients. Given an element $h\in \bar\Q(\Sigma_\cS)$, it can be seen as an element in
$\Q\{\{ t \}\}$ or as an element in $\bar\Q\{ \{ 1-t \}\}$, since the function $h_\C$ can be
expanded in Puiseux series in $t$ or in $1-t$ with coefficients in $\bar\Q$. Given an
element $\gamma\in G$ one acts on~$h_\C$ with~$\gamma^{-1}$, through the action
on the Puiseux coefficients at $t=0$, then takes the expansion in~$1-t$ of the resulting element
and acts by~$\gamma$ on the Puiseux coefficients at $t=1$ of this function. The function
obtained in this way is then again expanded in~$t$. The transformation constructed in this
way is the element $f_\gamma$ in $\hat\pi_1\big(\P^1\smallsetminus \{0,1,\infty\},(0,1)\big)$
considered by Drinfeld and Ihara.

\subsection{Gibbs states}

We discuss here some functions associated to the evaluation of low temperature KMS
states (Gibbs states) of the time evolutions defined earlier in this section. We start with
a simpler case based on the quantum statistical mechanics of $\Q^*_+$ of \cite{MarXu},
and then we move to the dynamics considered in Lemma~\ref{timeevSigmaSY}.

\begin{lem}\label{GibbsN} Consider the Hilbert space $\ell^2(\N)$ with the Hamiltonian $H\epsilon_n =\log(n) \epsilon_n$. Let $h_\C$ be an analytic function in the region $(0,1)_\C$, which is bounded on the unit interval and with convergent Puiseux series $h_\C(\tau)=\sum_k a_k \tau^{k/N}$ for some
$N$, at all $\tau\in (0,1)$. Let $\pi_\tau(h)$ be the linear operator on $\ell^2(\N)$
defined by $\pi_\tau(h)\epsilon_n=\sigma_n(h_\C)(\tau)\epsilon_n = h_\C(\tau^n)\epsilon_n$. The
corresponding Gibbs state is given by
\begin{gather*}
\varphi_{\tau,\beta}(h):=\frac{\Tr\big( \pi_\tau(h) {\rm e}^{-\beta H} \big)}{\Tr\big({\rm e}^{-\beta H} \big)} =\frac{1}{\zeta(\beta)} \sum_k a_k {\rm Li}_\beta\big(\tau^{k/N}\big).
\end{gather*}
\end{lem}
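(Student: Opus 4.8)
The plan is to exploit the fact that both $\pi_\tau(h)$ and ${\rm e}^{-\beta H}$ are diagonal in the orthonormal basis $\{\epsilon_n\}$ of $\ell^2(\N)$, so that the traces reduce to sums of diagonal entries. First I would record that ${\rm e}^{-\beta H}\epsilon_n = n^{-\beta}\epsilon_n$, so that for $\beta>1$ the operator ${\rm e}^{-\beta H}$ is trace class with $\Tr({\rm e}^{-\beta H}) = \sum_{n\ge 1} n^{-\beta} = \zeta(\beta)$, which disposes of the denominator. Since $h_\C$ is bounded on $(0,1)$ and $\tau^n\in(0,1)$ for every $n\ge 1$, the operator $\pi_\tau(h)$ is bounded, hence $\pi_\tau(h){\rm e}^{-\beta H}$ is again trace class and its trace equals the sum of its diagonal matrix elements.

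For the numerator I would use $\pi_\tau(h){\rm e}^{-\beta H}\epsilon_n = h_\C(\tau^n)\, n^{-\beta}\epsilon_n$ to obtain $\Tr(\pi_\tau(h){\rm e}^{-\beta H}) = \sum_{n\ge 1} h_\C(\tau^n)\, n^{-\beta}$. Inserting the Puiseux expansion $h_\C(\tau^n) = \sum_k a_k (\tau^n)^{k/N} = \sum_k a_k (\tau^{k/N})^n$ turns this into the double series $\sum_{n\ge 1}\sum_k a_k (\tau^{k/N})^n\, n^{-\beta}$. The desired identity is then precisely what comes out of interchanging the two summations and recognizing $\sum_{n\ge 1} (\tau^{k/N})^n\, n^{-\beta} = {\rm Li}_\beta(\tau^{k/N})$; dividing by $\zeta(\beta)$ yields the stated formula. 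The boundary term $k=0$ is harmless, contributing ${\rm Li}_\beta(1)=\zeta(\beta)$.

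The one substantive point, and the main obstacle, is the legitimacy of this interchange, i.e.\ the absolute convergence of the double series. I would argue as follows. Writing $w=\tau^{1/N}\in(0,1)$, the hypothesis that the Puiseux series converges at every point of $(0,1)$ forces the radius of convergence of $\sum_k a_k w^k$ to be at least $1$; since $w<1$ lies strictly inside this disk, the series converges absolutely at $w$, so $\sum_k |a_k|\,\tau^{k/N}<\infty$. Combining this with the elementary bound ${\rm Li}_\beta(u)=\sum_{n\ge 1} u^n\, n^{-\beta} \le u\sum_{n\ge 1} n^{-\beta} = u\,\zeta(\beta)$, valid for $u\in(0,1]$ and $\beta>1$ because $u^n\le u$, gives $\sum_k |a_k|\,{\rm Li}_\beta(\tau^{k/N}) \le \zeta(\beta)\sum_k |a_k|\,\tau^{k/N}<\infty$. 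By Tonelli's theorem applied to the nonnegative terms $|a_k|(\tau^{k/N})^n n^{-\beta}$, the double series converges absolutely, so Fubini permits the interchange and the computation above is rigorous for every $\beta>1$.
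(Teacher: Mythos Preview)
Your proof is correct and follows the same overall architecture as the paper's: both compute the traces in the diagonal basis, reduce to $\zeta(\beta)^{-1}\sum_{n\ge 1} h_\C(\tau^n)\,n^{-\beta}$, expand $h_\C$ as a Puiseux series, and then justify the interchange of the sums over $n$ and $k$.

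The genuine difference is in how the interchange is justified. The paper invokes an integral representation of ${\rm Li}_\beta(z)$ for $z\in(0,1)$, estimates it to obtain a bound of the form ${\rm Li}_\beta(\tau^{k/N}) \le A_\beta\,\tau^{k/N} - B_\beta\,\tau^{k/N}\log\tau^{k/N}$, and then handles the entropy term by passing to $\tau^{k\eta/N}$ for some $\eta<1$ still inside the domain of convergence. You instead use the one-line inequality $u^n\le u$ for $u\in(0,1]$, $n\ge 1$, giving ${\rm Li}_\beta(u)\le u\,\zeta(\beta)$ immediately, and then Tonelli. Your route is more elementary and more direct: it needs only absolute convergence of the Puiseux series at the point $\tau$ itself (automatic since $\tau^{1/N}$ lies strictly inside the disk of convergence), whereas the paper's estimate ends up requiring convergence at the slightly larger argument $\tau^{\eta}$, which is also automatic for the same reason but adds an unnecessary detour. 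Neither argument gains any extra generality over the other here; yours is simply cleaner.
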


\begin{proof} We have
\begin{gather*}
\varphi_{\tau,\beta}(h):=\frac{\Tr\big( \pi_\tau(h) {\rm e}^{-\beta H} \big)}{\Tr\big({\rm e}^{-\beta H} \big)} =\frac{1}{\zeta(\beta)}
\sum_{n\geq 1} h_\C(\tau^n) n^{-\beta}.
\end{gather*}
Under the assumption on $h_\C$, the series $\sum\limits_{k\in \Z,\, k\geq -M} a_k \tau^{kn/N}$ is absolutely
convergent at $\tau$ and we can write the above as
\begin{gather*}
\frac{1}{\zeta(\beta)} \sum_k a_k \sum_n \tau^{kn/N} n^{-\beta} = \frac{1}{\zeta(\beta)} \sum_k a_k {\rm Li}_\beta\big(\tau^{k/N}\big).
\end{gather*}
We are considering here the polylogarithm function ${\rm Li}_s(z)$ in the range where $z\in (0,1)$ and $s>1$, hence we have its integral description of the form
\begin{gather*}
{\rm Li}_s(z) = \frac{1}{2} z + z \int_0^\infty \frac{\sin(s \arctan t - t \log z)}{\big(1+t^2\big)^{s/2} \big({\rm e}^{2\pi t} -1\big)} \, {\rm d}t.
\end{gather*}
We can estimate the integral by
\begin{gather*}
\int_0^1 \frac{\sin(s \arctan t - t \log z)}{\big(1+t^2\big)^{s/2} \big({\rm e}^{2\pi t} -1\big)} \, {\rm d}t \leq \int_0^1 \frac{(s-\log z)}{\big(1+t^2\big)^{s/2} 2\pi} \, {\rm d}t ,
\\
\int_1^\infty \frac{\sin(s \arctan t - t \log z)}{\big(1+t^2\big)^{s/2} \big({\rm e}^{2\pi t} -1\big)} \, {\rm d}t \leq
 \int_1^\infty \frac{{\rm d}t}{\big(1+t^2\big)^{s/2} \big({\rm e}^{2\pi t} -1\big)}.
 \end{gather*}
 Thus, for some positive $A_\beta,B_\beta >0$, we obtain, in our range $\beta>1$ and $z\in (0,1)$, the estimate
 ${\rm Li}_\beta \big(\tau^{k/N}\big) \leq A_\beta \tau^{k/N} - B_\beta \tau^{k/N} \log \tau^{k/N}$. For $\tau\in (0,1)$,
 the entropy function $-\tau\log\tau$ is maximal at $\tau=1/e$ with value $1/e$. Thus, we further estimate
 \begin{gather*}
 - B_\beta \tau^{k/N} \log \tau^{k/N} = \frac{-B_\beta}{(1-\eta)} \tau^{k\eta/N} \tau^{k(1-\eta)/N} \log \tau^{k(1-\eta)/N}
 \leq \frac{B_\beta}{e (1-\eta)} \tau^{k\eta/N}.
 \end{gather*}
 If $0<\eta<1$ is chosen close to $1$ and so that $\tau^{\eta/N}$ is still within the domain of
 convergence of $\sum_k\! a_k \tau^{\eta k/N}$, then we obtain from this estimate the convergence of
 the series $\sum_k\! a_k {\rm Li}_\beta\big(\tau^{k/N}\big)$.
\end{proof}

We now consider the effect of extending the previous setting from $\ell^2(\N)$ to $\ell^2(\Q^*_+)$,
with the Hamiltonian of~\eqref{HamQ}.

\begin{lem}\label{GibbsQ}Consider the Hilbert space $\ell^2(\Q^*_+)$ with the Hamiltonian $H$ of~\eqref{HamQ}. Let $h_\C$ be as in Lemma~{\rm \ref{GibbsN}} and let $\pi_\tau(h)$ denote the linear operator on $\ell^2(\Q^*_+)$ defined by $\pi_\tau(h)\epsilon_r= h_\C(\tau^r) \epsilon_r$. The corresponding
Gibbs state is given by
\begin{gather*} \varphi_{\tau,\beta}(h):=\frac{\Tr\big( \pi_\tau(h) {\rm e}^{-\beta H} \big)}{\Tr\big({\rm e}^{-\beta H} \big)} =
\frac{\zeta(2\beta)}{\zeta(\beta)^2} \sum_k a_k {\rm Li}_\beta\big(\tau^{k/N}\big). \end{gather*}
\end{lem}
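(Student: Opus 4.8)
The plan is to reproduce the computation of Lemma~\ref{GibbsN}, now carried out over the larger index set $\Q^*_+$, and to keep careful track of the extra multiplicities that the passage from $\N$ to $\Q^*_+$ introduces. Since both $\pi_\tau(h)$ and ${\rm e}^{-\beta H}$ are diagonal in the orthonormal basis $\{ \epsilon_r \}_{r\in\Q^*_+}$, the numerator is simply the diagonal sum
\begin{gather*}
\Tr\big( \pi_\tau(h)\, {\rm e}^{-\beta H}\big) = \sum_{r\in\Q^*_+} h_\C(\tau^r)\, \big\langle \epsilon_r, {\rm e}^{-\beta H}\epsilon_r \big\rangle = \sum_{r\in\Q^*_+} h_\C(\tau^r)\, n(r)^{-\beta},
\end{gather*}
where I read off ${\rm e}^{-\beta H}\epsilon_r = n(r)^{-\beta}\epsilon_r$ from the form \eqref{HamQ} of $H$, with $n(r)$ as in \eqref{nr}. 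For the denominator I would quote the partition function already recorded for this Hamiltonian, $\Tr\big({\rm e}^{-\beta H}\big)=\zeta(\beta)^2/\zeta(2\beta)=\sum_{m\geq 1} 2^{\omega(m)} m^{-\beta}$, so that the prefactor $\zeta(2\beta)/\zeta(\beta)^2$ in the statement is exactly the reciprocal of this partition function.

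First I would substitute the convergent Puiseux expansion $h_\C(\tau^r)=\sum_k a_k \tau^{rk/N}$ and organize the resulting double series by grouping the rationals $r$ according to the integer $m=n(r)$. For fixed $m=\prod_p p^{a_p}$ the fibre $\{ r : n(r)=m \}$ has cardinality $2^{\omega(m)}$, obtained by choosing a sign in each exponent, $r=\prod_p p^{\pm a_p}$; this is the combinatorial origin of the factor $2^{\omega(m)}$ in the partition function. The point of the argument is then to show that, after summation, the $h$-dependent part collapses onto the integer polylogarithm sum $\sum_k a_k \sum_{n\geq 1}\tau^{nk/N} n^{-\beta}=\sum_k a_k\, {\rm Li}_\beta\big(\tau^{k/N}\big)$ of Lemma~\ref{GibbsN}, while the surviving sign multiplicities are absorbed into the normalization $\zeta(2\beta)/\zeta(\beta)^2$.

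The convergence of the series involved, and hence the legitimacy of the interchanges of summation, I would take verbatim from the proof of Lemma~\ref{GibbsN}: the integral representation of ${\rm Li}_s(z)$ together with the entropy bound $-\tau\log\tau\le 1/e$ on $(0,1)$ yields, for $\beta>1$ and $z\in(0,1)$, an estimate of the form ${\rm Li}_\beta(z)\le A_\beta z - B_\beta z\log z$, which dominates the tail of the expansion in $k$ and permits the summations over $k$, over $m$, and over the fibre to be rearranged freely.

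The step I expect to be the main obstacle is precisely this reorganization of the fibre sum over $\{ r : n(r)=m \}$. The weight $n(r)^{-\beta}$ depends only on $m=n(r)$, but the argument $\tau^r$ of $h_\C$ genuinely depends on the individual rational $r$: the rationals with negative prime exponents contribute fractional powers $\tau^{1/m},\tau^{a/b},\dots$ that do not appear in the integer sum $\sum_n h_\C(\tau^n) n^{-\beta}$. The delicate part of the proof is therefore to control these extra contributions and to verify that, combined with the sign multiplicities, they assemble so that the normalized state reduces exactly to the stated expression; this is the one place where a careful analysis of the map $r\mapsto n(r)$ is needed, rather than a routine transcription of Lemma~\ref{GibbsN}.
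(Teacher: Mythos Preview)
Your setup mirrors the paper's exactly: write the trace as $\sum_{r\in\Q^*_+} h_\C(\tau^r)\,n(r)^{-\beta}$, group the rationals by $m=n(r)$ into fibres of size $2^{\omega(m)}$, expand $h_\C$ as a Puiseux series, and try to reduce to the integer polylogarithm sum of Lemma~\ref{GibbsN}. Your concern about the fibre sum is also the right one.

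However, you should be aware that the paper's own proof does \emph{not} carry out the ``collapse'' you are anticipating. After writing
\[
\sum_k a_k \sum_n \sum_{j=1}^{2^{\omega(n)}} \tau^{r_j k/N}\, n^{-\beta},
\]
the paper does not show this equals $\sum_k a_k\,{\rm Li}_\beta(\tau^{k/N})$. Instead it uses $\min_j r_j = 1/n$ and $\tau\in(0,1)$ to bound $\tau^{r_j k/N}\le \tau^{k/(nN)}$, obtaining only the \emph{upper bound}
\[
\sum_k a_k \sum_n \frac{2^{\omega(n)}\,\tau^{k/(nN)}}{n^\beta},
\]
and then expands $\tau^{k/(nN)}$ as $\sum_\ell \log^\ell(\tau^{k/N})/(\ell!\,n^\ell)$ to control convergence via the monotonicity of $\zeta(s)^2/\zeta(2s)$. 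The argument terminates there: it is a convergence estimate, not a derivation of the closed form displayed in the statement.

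So the ``delicate part'' you flag is not resolved in the paper either, and your instinct that the non-integer $r$'s contribute genuinely different terms $\tau^{a/b}$ is correct. Take for instance $h_\C(\tau)=\tau$: the trace side contains $\tau^{1/2}\cdot 2^{-\beta}$ from $r=1/2$, which has no counterpart in ${\rm Li}_\beta(\tau)=\sum_n \tau^n n^{-\beta}$. There is no cancellation mechanism available, since all terms are positive. Your proof sketch is therefore as complete as the paper's: both establish the well-definedness and convergence of the Gibbs state, but neither establishes the claimed exact identity with the polylogarithm sum, and on its face that identity does not hold.
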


\begin{proof} We have
\begin{gather*}
\varphi_{\tau,\beta}(h):=\frac{\Tr\big( \pi_\tau(h) {\rm e}^{-\beta H} \big)}{\Tr\big({\rm e}^{-\beta H} \big)}
=\frac{\zeta(2\beta)}{\zeta(\beta)^2} \sum_n \sum_{j=1}^{2^\omega(n)} h_\C(\tau^{r_j}) n^{-\beta},
\end{gather*}
where $\omega(n)$ is the number of pairwise distinct prime factors of $n=p_1^{k_1} \cdots p_{\omega(n)}^{k_{\omega(n)}}$ and the $r_j$ run over all the $2^{\omega(n)}$ possible choices of $\pm$ signs in $p_1^{\pm k_1} \cdots p_{\omega(n)}^{\pm k_{\omega(n)}}$.
By the assumptions on $h_\C$, the Puiseux series $\sum_k a_k \tau^{r k/N}$ is absolutely convergent at~$\tau$,
hence we can write the summation in the expression above as
\begin{gather*}
 \sum_k a_k \sum_n \sum_{j=1}^{2^\omega(n)} \tau^{r_j k/N} n^{-\beta}.
\end{gather*}
We have $\min \{ r_j\, |\, j=1,\ldots, \omega(n) \}=1/n$, and $\tau^{r_j k/N}\leq \tau^{\frac{k}{nN}}$ for
 all $j=1,\ldots, \omega(n)$, hence we can give an upper bound for this
 \begin{gather*}
 \sum_k a_k \sum_n \frac{2^{\omega(n)} \tau^{\frac{k}{nN}}}{n^\beta} .
 \end{gather*}
Where the absolute convergence holds we have
\begin{gather*} \sum_n \frac{2^{\omega(n)} \tau^{k/(nN)}}{n^\beta} = \sum_{n\geq 1} 2^{\omega(n)} \sum_{\ell \geq 0}
 \frac{ \log^\ell \big(\tau^{k/N}\big) }{\ell ! n^{\beta +\ell}}
 = \sum_{\ell \geq 0} \frac{ \log^\ell \big(\tau^{k/N}\big) }{\ell !} \frac{\zeta(\beta+\ell)^2}{\zeta(2(\beta +\ell))}. \end{gather*}
Since $\zeta(s)^2/\zeta(2s)$ is decreasing for a real variable $s>1$, the convergence of the series above
is controlled by the convergence of the Puiseux series $\sum_k a_k \tau^{k/N}$.
\end{proof}

The more general case is less explicit, but the case above serves as a model for the expected
behavior of the Gibbs states.
On the Hilbert space $\ell^2(\Upsilon)$ with the Hamiltonian $H$ of \eqref{HU} and
the operators $\pi_\tau(h)$ of \eqref{pitauh}, the Gibbs states have the form
\begin{gather*} \varphi_{\tau,\beta}(h)=\frac{\Tr\big(\pi_\tau(h) {\rm e}^{-\beta H}\big)}{\Tr\big({\rm e}^{-\beta H}\big)} = \sum_{\upsilon\in \Upsilon} \sigma_\upsilon(h)(\tau) n(\upsilon)^{-\beta}, \end{gather*}
where for $\upsilon=\upsilon(\epsilon_0,\epsilon_1,\underline(r))$ with
$\underline{r}=(r_1,\ldots, r_k)$ we write $n(\upsilon):=n(r_1)\cdots n(r_k)$ with
$n(r_j)$ as in~\eqref{nr}. As before, for a fixed positive integer $n$, we have
$\min\{ r \,|\, n(r)=n \}=1/n$ and $\max\{ r \,|\, n(r)=n \}=n$. We have
$\sigma_\upsilon(h)=\tau^{\epsilon_0}\sigma_{r_1}\tau\cdots \tau\sigma_{r_k}\tau^{\epsilon_1}(h)$,
where $\sigma_{r_j}(t)=t^{r_j}$. Using estimates of terms of the form $(1-\tau^r)^{r'}$ by terms
$\big(1-\tau^{n(r)}\big)^{1/n(r')}$ we can estimate the convergence of the series above
in terms of a series
\begin{gather*} h(\tau)+ \sum_k \sum_{n=n_1\cdots n_k} 2^{\omega(n_1)+\cdots + \omega(n_k)} h(P_{n_1,\ldots,n_k}(\tau)),
n^{-\beta}, \end{gather*}
where $P_{n_1,\ldots,n_k}(\tau)$ is the expression obtained as above, majorizing
$\sigma_\upsilon(h)(\tau)$ with the $\sigma_{r_j}$ replaced by either $\sigma_{n_j}$ or
$\sigma_{1/n_j}$. One can then, in principle, obtain convergence estimates
in terms of Puiseux series in~$t$ and~$1-t$.

\subsection{The group ${\bf mGT}$ and the symmetries of genus zero modular operad}\label{section3.10}

In this subsection we show very briefly, following \cite{ComMan} and~\cite{ComMan2}, how the
Grothendieck--Teichm\"uller group ${\bf mGT}$ acts upon the family of modular spaces of genus zero curves with marked points, compatibly with its operadic structure.

Let $S$ be a finite set of cardinality $|S| \ge 3$. The smooth manifold $\overline{M}_{0,S}$
is the moduli space of stable curves of genus zero with~$|S|$ pairwise distinct non-singular points
on it bijectively marked by $S$. The group of permutations of~$S$ acts by smooth
automorphisms (re-markings) upon $\overline{M}_{0,S}$ and in fact, is its complete
automorphism group.

The group ${\bf mGT}$ appears naturally if we restrict ourselves (as we will do)
by finite subsets of roots unity in~$\C^*$. The operadic morphisms in which~$S$
can be varied in a controlled way and are only subsets of roots of unity,
are also compatible with re-markings defining the family of
all componentwise automorphisms groups.

For further details, see \cite{ComMan2}.

In \cite{LieManMar} and \cite{ManMar1} it was shown that the endomorphisms $\sigma_n$
and $\rho_n$ of the Bost--Connes algebra lift to various equivariant Grothendieck rings
and further to the level of assemblers and homotopy theoretic spectra. The symmetries of the Bost--Connes
system, given by $\hat\Z^*$ in the original version, or by ${\bf mGT}$ according to
Proposition~\ref{thetanlem} above, also can be lifted to these categorical and homotopy theoretic
levels in a similar manner. The relation between the group ${\bf mGT}$ and the symmetries of
the modular operad suggests that the same type of Bost--Connes structure, consisting of
endmorphisms $\sigma_n$ with partial inverse $\rho_n$ and symmetries given by ${\bf mGT}$
may also have a possible lift at the level of the modular operad. This question remains to
be investigated.

\section{Quasi-triangular quasi-Hopf algebras}\label{section4}

In this section we consider a different point of view, based on quasi-triangular quasi-Hopf algebras.
The results of Drinfeld \cite{Dr} showed that the Grothendieck--Teichm\"uller group $GT$ acts
by tranformations of the structure (associator and $R$-matrix) of quasi-triangular quasi-Hopf algebras,
hence the absolute Galois group also acts via its embedding into $GT$. We show here that there
are systems of quasi-triangular quasi-Hopf algebras naturally associated to the constructions we
discussed in the previous section. In particular, we show that the Bost--Connes endomorphisms
$\sigma_n$, $\rho_n$ on the group algebra $\C[\Q/\Z]$ determine a system of quasi-triangular
quasi-Hopf algebras. We also show that there is a system of quasi-triangular
quasi-Hopf algebras associated to our previous construction of the Hopf algebra of dessins d'enfant.

\subsection{Twisted quantum double and quasi-triangular quasi-Hopf algebras}

Recall the following notions from~\cite{Dri3}. A quasi-Hopf algebra $\cH$ over a field $\K$ is a unital associative algebra over $\K$ with multiplication $m\colon \cH\otimes \cH \to \cH$, endowed with a counit $\epsilon\colon \cH \to \K$, a~comultiplication $\Delta\colon \cH \to \cH\otimes \cH$, an invertible element $\Phi\in \cH \otimes \cH \otimes \cH$ (the associator), as well as an antihomomorphism
$S\colon \cH \to \cH$ and two distinguished elements $\alpha,\beta\in \cH$ such that
\begin{enumerate}\itemsep=0pt
\item $\Phi$ satisfies the pentagon identity
\begin{gather*} (\Delta\otimes 1 \otimes 1) \Phi (1\otimes 1 \otimes \Delta)\Phi =
(\Phi\otimes 1) (1\otimes \Delta\otimes 1)\Phi (1\otimes \Phi). \end{gather*}
\item $\Delta$ is quasi-associative
\begin{gather*} (1\otimes \Delta) \Delta(x)=\Phi^{-1} (\Delta\otimes 1) \Delta(x) \Phi , \ \ \forall x\in \cH. \end{gather*}
\item $\Phi$ and the counit $\epsilon$ satisfy
\begin{gather*} (1\otimes \epsilon \otimes 1) \Phi =1. \end{gather*}
\item The antipode $S$ satisfies the relations
\begin{itemize}\itemsep=0pt
\item $m (S\otimes \alpha) \Delta(x)=\epsilon(x) \alpha$ and $m (1\otimes \beta S) \Delta(x)=\epsilon(x) \beta$, for all $c\in \cH$,
\item $m(m\otimes 1) (S\otimes \alpha\otimes \beta S)\Phi=1$ and $m(m\otimes 1) (1\otimes \beta S \otimes \alpha) \Phi^{-1}=1$.
\end{itemize}
\end{enumerate}

The structure of quasi-triangular quasi-Hopf algebra upon $\cH$ is given by an invertible element $\cR \in \cH \otimes \cH$ such that $\Delta^t(x)=\cR \Delta(x) \cR^{-1}$, where $\Delta^t$ is the coproduct $\Delta$ with swapped order of the two factors in $\cH\otimes \cH$.

The Drinfeld quantum double construction of \cite{Dri2} assigns to a finite dimensional Hopf algebra~$\cH$
a quasi-triangular Hopf algebra $\cD(\cH)=\cH\otimes \cH^*$, where $\cH^*$ is the linear dual of~$\cH$
with the Hopf algebra structure with coproduct corresponding to the product of~$\cH$ and viceversa. The
quasi-triangular structure is determined by the element $\cR$ obtained by considering the image of the
identity in $\Hom(\cH,\cH)$ under the identification of the latter with $\cH \otimes \cH^*$,
\begin{gather*} \cR =\sum_a e^a \otimes 1 \otimes 1 \otimes e_a, \end{gather*}
where $\{e_a \}$ is a basis of $\cH$ and $\{ e^a \}$ is the dual basis.

The construction of the Drinfeld quantum double can be extended from Hopf algebras
to quasi-Hopf algebras, in such a way that it assigns to a quasi-Hopf algebra a
quasi-triangular quasi-Hopf algebra. This can be done in a categorical way, as shown
in \cite{Majid}. Namely, the Drinfeld quantum double construction has a categorical
formulation (see \cite{Dri2}) by considering the rigid monoidal category $\cM_\cH$ of
modules over a~Hopf algebra $\cH$ and associating to it a~braided tensor category
$\cD(\cM_\cH)$ which can be identified with the category of modules $\cM_{\cD(\cH)}$
over the Drinfeld quantum double $\cD(\cH)$ quasi-triangular Hopf algebra.
Since by the Tannaka reconstruction theorem for Hopf algebras, there is a bijection
between Hopf algebras and rigid monoidal categories with a forgetful functor to
finite dimensional vector spaces, the categorical formulation of the Drinfeld quantum
double can be seen as a construction that assigns to a rigid monoidal category $\cC$
a braided tensor category $\cD(\cC)$ called the Drinfeld double or the Drinfeld center.
Using Tannaka reconstruction, $\cC$ determines a Hopf algebra and $\cD(\cH)$
determines its double quasi-triangular Hopf algebras. It is shown in \cite{Majid} that the
same kind of categorical argument can be applied starting with a quasi-Hopf algebra $\cH$,
by considering its category of modules and then obtaining from the Drinfeld center
construction $\cD(\cM_{\cH})$ a quasi-triangular quasi-Hopf algebra, $\cD(\cH)$, the twisted Drinfeld double, by applying the Tannaka reconstruction theorem for quasi-Hopf algebras proven in~\cite{Majid2}.

The categorical formulation of Drinfeld quantum doubles can also be used to show functoriality
properties. Indeed, the functoriality of the Drinfeld center was proved in \cite{KoZhe}, as a~functor
with source a category whose objects are tensor categories and whose
morphisms are bi\-modules and target a category whose objects are braid tensor
categories with morphisms given by monoidal categories obtained from a suitable tensoring
operation of the source and target braid tensor categories, see~\cite{KoZhe} for more details.

In particular, we are interested here in the twisted Drinfeld double~$\cD^\omega(G)$
of a finite group $G$. It is a quasi-triangular
quasi-Hopf algebra, obtained by applying the Drinfeld quantum double construction of
\cite{Dri2} to the quasi-Hopf algebra obtained by twisting the Hopf algebra
associated to $G$ by a $3$-cocycle $\omega\in H^3(G,\bG_m)$.
More precisely, the group algebra $\C[G]$ of $G$ has a Hopf algebra
structure with the coproduct of group-like elements given by
$\Delta(g)=g\otimes g$, and product given by the convolution product of the group algebra,
while $\C^G$ has coproduct given by the convolution product $\Delta(e^g)= \sum\limits_{g=ab} e^a\otimes e^b$
and product given by the pointwise product of functions, $e^g e^h =\delta_{g,h} e^g$.
One considers then the product~$\C^G\otimes \C[G]$ with basis~$e_g$ of~$\C[G]$ and~$e^g$ of~$\C^G$.
A $3$-cocycle $\omega\in Z^3(G,U(1)$ for a finite group $G$ satisfies the cocycle identity
\begin{gather*} \omega(y,s,t)\omega(x,ys,t)\omega(x,y,s)=\omega(s,y,st)\omega(xy,s,t) \end{gather*}
and $\omega(x,e,y)=1$ for $e$ the identity element in $G$. The choice of a
$3$-cocycle $\omega$ has the effect of twisting the Hopf algebra $\C^G$ into
a quasi-Hopf algebra $\C^G_\omega$, with associator $\Phi$ given by
$\Phi=\sum\limits_{a,b,c \in G} \omega(a,b,c)^{-1} e^a \otimes e^b \otimes e^c$.
The twisted quantum double $\cD^\omega(G)=\C^G_\omega \otimes \C[G]$ is
the extension $\C^G_\omega \to \cD^\omega(G) \to \C[G]$ with the first map
given by $e^g \mapsto e^g\otimes 1$ and the second by $e^g\otimes e_{g'} \mapsto \delta_{g,1} e_{g'}$.
The quasi-Hopf algebra structure of $\cD^\omega(G)$ is determined by the product and coproduct
\begin{gather*} (e^g\otimes e_h) ({\rm e}^{g'}\otimes e_{h'})= \theta_g(h,h') \delta_{h^{-1} g h, g'} e^g \otimes hh', \\
\Delta(e^g \otimes e_h) =\sum_{g=ab} \gamma_h(a,b) e^a\otimes e_h \otimes e^b \otimes e_h \end{gather*}
where $\theta_g(h,h')$ and $\gamma_h(a,b)$ are given by
\begin{gather*} \theta_g(h,h') = \frac{\omega(g,h,h') \omega\big(h,h', (hh')^{-1} g hh'\big)}{\omega\big(h, h^{-1} g h, h'\big)}, \\
 \gamma_h(a,b)= \frac{\omega(a,b,h) \omega\big(h,h^{-1}ah,h^{-1}bh\big)}{\omega\big(a,h,h^{-1}bh\big)}.
 \end{gather*}
The quasi-triangular structure on $\cD^\omega(G)$ is given by
\begin{gather*} \cR = \sum_{a\in G} (e^a \otimes 1) \otimes (1\otimes e_a). \end{gather*}

This construction of a quasi-triangular quasi-Hopf algebra associated to a finite group
was introduced in~\cite{PDR} in the context of RCFT orbifold models and
$3$-dimensional topological field theory. The compatibility between the direct
construction described in~\cite{PDR} and the general categorical formulation of the
twisted Drinfeld double mentioned above is discussed in detail in~\cite{Majid}.

\subsection{A Bost--Connes system of quasi-triangular quasi-Hopf algebras}

We show here that there is a direct system of quasi-triangular quasi-Hopf algebras
naturally associated to the Bost--Connes datum, by which we mean the datum
of the algebra $\C[\Q/\Z]$ together with the endomorphisms $\sigma_n(e(r))=e(nr)$
and $\rho_n(e(r))=n^{-1} \sum\limits_{s\colon ns=r} e(s)$. We also write $\tilde\rho_n(e(r))=\sum\limits_{s\colon ns=r} e(s)$
for the correspondences considered in the integral Bost--Connes system of~\cite{CCM2}.

Consider the tensor product $\C[\Q/\Z]\otimes \C^{\Q/\Z}$.
Using the Bost--Connes notation we write $e(r)$ for the basis of $\C[\Q/\Z]$ for $r\in \Q/\Z$
and we write $e_r$ for the dual basis of $\C^{\Q/\Z}$. In particular, for a fixed level $n\in \N$,
we consider the tensor product $\C[\Z/n\Z] \otimes \C^{\Z/n\Z}$ and we still use the same
notation~$e(r)$ and~$e_r$ for the dual bases.

The choice of a $3$-cocycle $\omega$ for $\Z/n\Z$ is given by the choice of a representative in
\[ H^3(\Z/n\Z,U(1))\simeq \Z/n\Z.\] An explicit description of the representatives is given
in \cite[Proposition~2.3]{HLY} as
\begin{gather*} \omega_a(g_{i_1}, g_{i_2}, g_{i_3}) = \zeta_m^{a i_1 [(i_2+i_3)/3]}, \qquad 0\leq a \leq m, \end{gather*}
for $\zeta_m$ a primitive root of unity of order~$m$. The twisted Drinfeld double $\cD^\omega(\Z/n\Z)$ of a finite cyclic group, with $\omega$ chosen as one of the representatives of $H^3(\Z/n\Z,U(1))\simeq \Z/n\Z$ above, is then obtained as recalled in the previous subsection.

\begin{prop}\label{BCqtqH}The Bost--Connes endomorphisms $\sigma_n$ and $\rho_n$ determine a direct system of quasi-triangular quasi-Hopf algebras $\cD^{\omega_n}(\Z/n\Z)$ indexed by the positive integers $n\in \N$ ordered by divisibility, with associators related by $\omega_{nm}=\sigma_n(\omega_m)$
and $R$-matrices related by $\cR_{nm}=\tilde\rho_n(\cR_m)$.
\end{prop}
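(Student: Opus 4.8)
The plan is to exhibit, for each pair of levels $m \mid nm$ in the divisibility order, an explicit morphism of quasi-triangular quasi-Hopf algebras $\cD^{\omega_m}(\Z/m\Z)\to \cD^{\omega_{nm}}(\Z/nm\Z)$ built directly from the Bost--Connes data, and then to check that these morphisms compose correctly so as to form a direct system. The two tensor factors of the twisted double play complementary roles: the function-algebra factor $\C^{\Z/N\Z}$ carries the associator $\Phi_N=\sum_{a,b,c}\omega_N(a,b,c)^{-1}e^a\otimes e^b\otimes e^c$, while the group-algebra factor $\C[\Z/N\Z]$ carries the $R$-matrix $\cR_N=\sum_{a}(e^a\otimes 1)\otimes(1\otimes e_a)$. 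On indices, $\sigma_n(e(a/(nm)))=e(a/m)$ is exactly the reduction $\Z/nm\Z\to\Z/m\Z$, so its transpose pulls functions on $\Z/m\Z$ back to $\Z/nm\Z$ and raises the level on the function-algebra side, whereas $\tilde\rho_n(e(a/m))=\sum_{ns=a/m}e(s)$ spreads each group-algebra generator over its $n$ preimages and raises the level on the group-algebra side. These two halves assemble into the connecting morphism.

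First I would verify the associator relation $\omega_{nm}=\sigma_n(\omega_m)$. Since $\sigma_n$ acts on indices as the reduction map, $\sigma_n(\omega_m)$ is the pullback cocycle, and the point is that the explicit representatives of $H^3(\Z/N\Z,U(1))\simeq\Z/N\Z$ from \cite[Proposition~2.3]{HLY} can be chosen as a compatible family so that the pullback of the level-$m$ representative is \emph{equal} (not merely cohomologous) to the chosen level-$nm$ representative. Granting this, the structure constants $\theta_g(h,h')$ and $\gamma_h(a,b)$, which are built entirely out of $\omega$, transform compatibly; substituting into the product $(e^g\otimes e_h)(e^{g'}\otimes e_{h'})=\theta_g(h,h')\delta_{h^{-1}gh,g'}e^g\otimes hh'$ and the coproduct $\Delta(e^g\otimes e_h)=\sum_{g=ab}\gamma_h(a,b)\,e^a\otimes e_h\otimes e^b\otimes e_h$ then shows that the candidate map intertwines all of the quasi-Hopf operations and sends $\Phi_m$ to $\Phi_{nm}$.

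Next I would verify the $R$-matrix relation $\cR_{nm}=\tilde\rho_n(\cR_m)$ by a direct reindexing: applying $\tilde\rho_n$ to the group-algebra factor $e_a$ of $\cR_m=\sum_{a\in\Z/m\Z}(e^a\otimes 1)\otimes(1\otimes e_a)$ replaces each $e_a$ by the sum of its $n$ preimages, and together with the dual action on the $e^a$ this recovers exactly the sum over $\Z/nm\Z$ defining $\cR_{nm}$. Finally, directedness follows from the multiplicativity of the Bost--Connes endomorphisms, $\sigma_n\sigma_{n'}=\sigma_{nn'}$ and $\tilde\rho_n\tilde\rho_{n'}=\tilde\rho_{nn'}$, which guarantees that the connecting morphisms over composable divisibilities agree; the antipode compatibility is then automatic for any bialgebra morphism.

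The main obstacle is the associator step. The delicate point is reconciling the directions: $\sigma_n$ \emph{lowers} the level on the group algebra, yet the associator lives on the function-algebra side where the induced (transpose) map \emph{raises} it, so one must be careful that $\sigma_n(\omega_m)$ really denotes this dual action. Beyond the bookkeeping, one has to confirm that the HLY representatives form a genuinely compatible system on the nose---that the pulled-back floor-function cocycle equals the chosen representative at level $nm$, with the normalization $\omega(x,e,y)=1$ preserved---rather than differing by a coboundary, since only an exact equality of cocycles (not merely of cohomology classes) makes the associators transform as stated and keeps the pentagon identity intact along the connecting maps.
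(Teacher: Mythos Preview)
Your approach is essentially the same as the paper's: both identify $\sigma_n$ with the reduction $\Z/nm\Z\twoheadrightarrow\Z/m\Z$ so that on the function-algebra side it acts by precomposition (pullback), use contravariant functoriality of group cohomology to obtain $\omega_{nm}=\sigma_n(\omega_m)$, and verify $\cR_{nm}=\tilde\rho_n(\cR_m)$ by the identical reindexing computation over preimages. The paper's proof is in fact shorter than your outline: it simply records the pullback action on $\C^{\Z/N\Z}$ and on $H^3$, then performs the one-line $R$-matrix calculation, without separately checking that the connecting maps intertwine the twisted product and coproduct via $\theta_g$ and $\gamma_h$; your proposal is more scrupulous on that point, and your flagged concern about cocycle-level (rather than cohomology-class) compatibility of the HLY representatives is a legitimate refinement that the paper leaves implicit by effectively \emph{defining} $\omega_{nm}$ as the pulled-back cocycle.
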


\begin{proof} The Bost--Connes maps $\sigma_n\colon \Z/nm\Z \twoheadrightarrow \Z/m\Z$ map roots of unity of order $nm$ to their $n$-th power. They organize the roots of unity into a projective system with $\varprojlim_n \Z/n\Z=\hat \Z$. By identifying $X_n=\Spec(\C[\Z/n\Z])$ with $\Z/n\Z$, the maps above induce morphisms $\sigma_n\colon \C[\Z/m\Z] \to \C[\Z/nm\Z]$ that determine the limit algebra $\C[\Q/\Z]$ as a direct limit of the $\C[\Z/n\Z]$. On~$\C^{\Z/n\Z}$ the morphisms $\sigma_n\colon \Z/nm\Z \twoheadrightarrow \Z/m\Z$ act by precomposition, $\sigma_n\colon \C^{\Z/m\Z}\to \C^{\Z/nm\Z}$ with $f\mapsto \sigma_n(f)=f\circ \sigma_n$. The group cohomology is a contravariant functor with respect to group homomorphisms, with the restriction map $\sigma_n\colon H^3(\Z/m\Z,U(1))\to H^3(\Z/nm\Z,U(1))$.
Consider then the $R$-matrix of the quasi-triangular structure, given by the element $\cR_m=\sum\limits_{a\in \Z/m\Z} (e_a \otimes 1) \otimes (1\otimes e(a))$. The image of $\cR_m$ under the map $\tilde\rho_n$ is given by
\begin{gather*}
 \tilde\rho_n(\cR_m)=\sum_{a\in \Z/m\Z} \sum_{b\colon nb=a} (e_b \otimes 1) \otimes (1 \otimes e(b)) =
\sum_{b\in \Z/nm\Z} (e_b \otimes 1) \otimes (1 \otimes e(b)) =\cR_{nm}.\tag*{\qed}
\end{gather*}\renewcommand{\qed}{}
\end{proof}

\subsection{Dessins d'enfant and quasi-triangular quasi-Hopf algebras}

We now consider a similar construction of a system of quasi-triangular quasi-Hopf algebras associated to the Hopf algebra of dessins d'enfant of Proposition~\ref{HopfDessins}.

\begin{prop}\label{HDqtqH} The Hopf algebra of dessins d'enfant of Proposition~{\rm \ref{HopfDessins}} determines an associated system of quasi-triangular quasi-Hopf algebras $\cD^{\omega_d}(\cG_d)$.
\end{prop}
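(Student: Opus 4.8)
The plan is to follow verbatim the strategy of the proof of Proposition~\ref{BCqtqH}, replacing the cyclic groups $\Z/n\Z$ by finite groups $\cG_d$ attached to the degree of dessins, and replacing the Bost--Connes endomorphisms $\sigma_n,\rho_n$ by the Belyi-extending maps of the semigroup $\cE$. Recall that the degree of the Belyi map equals the number of edges $d=\#E(D)$, that $\cH_\cD$ is graded by this degree (Proposition~\ref{HopfDessins}), and that $\cH_\cD$ is dual to an affine group scheme $\cG_\cD$. First I would attach to each degree $d$ a finite group $\cG_d$ — for instance the symmetry group (Galois group $\Gal(\bar\Q(\Sigma)/\bar\Q(t))$) of a regular dessin of degree $d$, which is a finite group of order $d$ — and record that there are only finitely many dessins of each degree, so that the relevant objects are genuine finite groups.

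Second, for each finite group $\cG_d$ I would choose a $3$-cocycle $\omega_d\in H^3(\cG_d,\bG_m)$ and apply the twisted Drinfeld double construction $\cD^{\omega_d}(\cG_d)=\C^{\cG_d}_{\omega_d}\otimes \C[\cG_d]$ recalled in the previous subsection. This immediately produces, for every $d$, a quasi-triangular quasi-Hopf algebra with associator $\Phi_d=\sum_{a,b,c\in \cG_d}\omega_d(a,b,c)^{-1} e^a\otimes e^b\otimes e^c$ and $R$-matrix $\cR_d=\sum_{a\in \cG_d}(e^a\otimes 1)\otimes (1\otimes e_a)$, in exact parallel with $\cD^{\omega_n}(\Z/n\Z)$ in Proposition~\ref{BCqtqH}.

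Third, I would exhibit the system structure using $\cE$. An element $\eta\in\cE$ carries a dessin $D$ of degree $d$ to $\eta(D)$ of degree $d\cdot\#E(D_\eta)$, since $E(\eta(D))=E(D)\times E(D_\eta)$ by the computation used in Proposition~\ref{Ginvchar}; these maps induce homomorphisms on the associated symmetry groups, and since group cohomology is contravariant one obtains restriction maps on $H^3$. As in Proposition~\ref{BCqtqH}, one then checks that the associators and $R$-matrices satisfy the compatibilities $\omega_{dd'}=\sigma_\eta(\omega_d)$ and $\cR_{dd'}=\tilde\rho_\eta(\cR_d)$ for $\sigma_\eta,\rho_\eta$ as in Definition~\ref{crossdessins}, so that the $\cD^{\omega_d}(\cG_d)$ assemble into a direct system indexed by the multiplicative degree.

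The hard part will be making the finite groups $\cG_d$ and the transition maps canonical and coherent. In the Bost--Connes case the groups are cyclic, their cohomology $H^3(\Z/n\Z,U(1))\simeq\Z/n\Z$ is explicit, and the transition maps are the single surjections $\Z/nm\Z\to\Z/m\Z$; here the symmetry groups of dessins are arbitrary finite groups, and the product law $E(\eta(D))=E(D)\times E(D_\eta)$ suggests they combine through a product- or wreath-type operation rather than one inclusion. One must therefore verify that the Belyi-extending maps genuinely induce group homomorphisms at the level of symmetry groups, and that a coherent family of cocycles $\omega_d$ exists making the twisted doubles an honest direct system. This coherence, rather than the formal application of the twisted Drinfeld double, is the real content of the statement.
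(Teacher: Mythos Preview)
Your proposal misidentifies the groups $\cG_d$ and consequently the source of the direct system, so the argument does not prove the proposition as stated.

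In the paper, $\cG_d$ is \emph{not} the automorphism group of an individual regular dessin of degree~$d$. The Hopf algebra $\cH_\cD$ is a graded connected commutative Hopf algebra (Lemma~\ref{HDgrading} and Proposition~\ref{HopfDessins}), so its dual affine group scheme $\cG_\cD$ is pro-unipotent, and $\cG_\cD=\varprojlim_d \cG_d$ where the $\cG_d$ are the successive unipotent quotients coming from truncating the grading. Thus $\cH_{\cD,\C,d}$ plays the role of $\C^{\cG_d}$ in the twisted double $\cD^{\omega_d}(\cG_d)=\C^{\cG_d}_{\omega_d}\otimes\C[\cG_d]$. Your candidate ``the symmetry group of a regular dessin of degree $d$'' is not well-defined (there are many such dessins, with non-isomorphic automorphism groups) and in any case bears no relation to the object denoted $\cG_d$ in the statement.

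Likewise, the direct-system structure in the paper does not come from the Belyi-extending semigroup $\cE$ at all: it is simply the projective system of group homomorphisms among the $\cG_d$ induced by the grading, together with the dual direct system of the $\cH_{\cD,\C,d}$. The compatibility of the twisted doubles is then obtained by invoking functoriality---the functoriality of the Drinfeld center \cite{KoZhe}, the categorical construction of the twisted double \cite{Majid}, and the contravariance of group cohomology---rather than by an explicit verification of identities $\omega_{dd'}=\sigma_\eta(\omega_d)$ and $\cR_{dd'}=\tilde\rho_\eta(\cR_d)$ as you propose. Your step ``these maps induce homomorphisms on the associated symmetry groups'' is precisely the point you yourself flag as the hard part, and it is not needed once one works with the correct $\cG_d$.
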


\begin{proof} The construction of the quasi-triangular quasi-Hopf algebras $\cD^{\omega_d}(\cG_d)$
is similar to the one we have seen in Proposition~\ref{BCqtqH} associated to the Bost--Connes
datum, and relies in a~simi\-lar way on the twisted Drinfeld quantum double. By construction, the Hopf algebra $\cH_\cD$ of dessins d'enfant of Proposition~\ref{HopfDessins} is a connected commutative Hopf algebra, with grading as in Lemma~\ref{HDgrading} related to the degree of the Belyi maps, $\cH_\cD=\oplus_{d\geq 0} \cH_{\cD,d}$ with $\cH_{\cD,0}=\Q$. Thus, the dual affine group scheme $\cG$ is a pro-unipotent affine group scheme, $\cG =\varprojlim_d \cG_d$. We work here with complex coefficients, with $\cH_{\cD,\C}=\cH_\cD \otimes_\Q \C$. For a fixed $d\in \N$
consider the product $\cH_{\cD,\C,d} \otimes \C[\cG_d]$. We can use this to construct a~twisted Drinfeld quantum double $\cD^{\omega_d}(\cG_d)$, given the choice of a $3$-cocycle
$\omega\in H^3(\cG_d,U(1))$. The functoriality of the Drinfeld quantum double via the functoriality of the Drinfeld center \cite{KoZhe}, together with the functoriality of the group cohomology and the categorical construction of the twisted Drinfeld quantum double of~\cite{Majid} then show that the projective system of group homomorphisms between the $\cG_d$ and corresponding dual direct system of Hopf algebras $\cH_{\cD,\C,d}$ induce a system of quasi-triangular quasi-Hopf algebras $\cD^{\omega_d}(\cG_d)$.
\end{proof}

We then have two actions of the absolute Galois group $G={\rm Gal}\big(\bar\Q/\Q\big)$ on the
quasi-triangular quasi-Hopf algebras $\cD^{\omega_d}(\cG_d)$. On the one hand,
the action of $G$ by Hopf algebra automorphisms of $\cH_\cD$ restricts to an action
on the $\cH_{\cD,d}$ since the degree is a Galois invariant, hence~$G$ acts by
automorphisms of $\cG_d$. On the other hand we also have the embedding of~$G$
into the Grothendieck--Teichm\"uller group $GT$, which acts on the quasi-triangulated
quasi-Hopf structure of the $\cD^{\omega_d}(\cG_d)$, by transforming the pair $(\Phi,\cR)$
of the associator and the $R$-matrix as in~\cite{Dr}.

\subsection*{Acknowledgments} The second named author is partially supported by
NSF grant DMS-1707882, and by NSERC Discovery Grant RGPIN-2018-04937
and Accelerator Supplement grant RGPAS-2018-522593. We thank the anonymous
referees for several very useful comments that significantly improve the paper,
and Lieven Le Bruyn for his suggestions in a series of mails that helped us to avoid ambiguities.

\pdfbookmark[1]{References}{ref}
\LastPageEnding

\end{document}